\author{Florin Ambro} 
\address{Institute of Mathematics ``Simion Stoilow'' of the Romanian
Academy\\
P.O. BOX 1-764, RO-014700 Bucharest\\ 
Romania.}
\email{florin.ambro@imar.ro}
\newcommand{\isoto}{{\overset{\sim}{\rightarrow}}}
\newcommand{\C}{{\mathbb C}}
\newcommand{\Q}{{\mathbb Q}}
\newcommand{\Z}{{\mathbb Z}}
\newcommand{\N}{{\mathbb N}}
\newcommand{\R}{{\mathbb R}}
\newcommand{\bA}{{\mathbb A}} 
\newcommand{\cB}{{\mathcal B}}
\newcommand{\cM}{{\mathcal M}}
\newcommand{\cO}{{\mathcal O}}
\newcommand{\cS}{{\mathcal S}}
\newcommand{\Char}{\operatorname{char}}
\newcommand{\codim}{\operatorname{codim}}
\newcommand{\depth}{\operatorname{depth}}
\newcommand{\dv}{\operatorname{div}}
\newcommand{\emb}{\operatorname{emb}}
\newcommand{\Exc}{\operatorname{Exc}}
\newcommand{\Int}{\operatorname{int}}
\newcommand{\LCS}{\operatorname{LCS}}
\newcommand{\lcm}{\operatorname{lcm}}
\newcommand{\mult}{\operatorname{mult}}
\newcommand{\relint}{\operatorname{relint}}
\newcommand{\Res}{\operatorname{Res}}
\newcommand{\Sing}{\operatorname{Sing}}
\newcommand{\Spec}{\operatorname{Spec}}
\newcommand{\Supp}{\operatorname{Supp}}
\theoremstyle{plain}
\newtheorem{thm}{Theorem}[section]
\newtheorem{question}[thm]{Question}
\newtheorem{lem}[thm]{Lemma}
\newtheorem{cor}[thm]{Corollary}
\newtheorem{prop}[thm]{Proposition}
\theoremstyle{definition}
\newtheorem{defn}[thm]{Definition}
\newtheorem{exmp}[thm]{Example}
\newtheorem{rem}[thm]{Remark}
\newtheorem{ack}{Acknowledgments}   
\theoremstyle{remark}
\begin{document}

\bibliographystyle{amsalpha+}
\title{On toric face rings II}
\maketitle

\begin{abstract} We introduce the class of weakly log canonical singularities, a natural generalization of semi-log canonical 
singularities. Toric varieties (associated to toric face rings, possibly non-normal or 
reducible) which have weakly (semi-) log canonical singularities are classified. 
In the toric case, we discuss residues to lc centers of codimension one or higher.
\end{abstract} 


\footnotetext[1]{2010 Mathematics Subject Classification. Primary: 14J17 Secondary: 14E30.}

\footnotetext[2]{Keywords: weakly log canonical singularities, toric face rings, residues to lc centers.}


\section*{Introduction}


Our motivation is to better understand semi-log canonical singularities (cf.~\cite{Kbook})
by constructing toric examples. Semi-log canonical singularities are possibly not normal,
and even reducible. So by a toric variety we mean $\Spec k[\cM]$, the spectrum of a 
toric face ring $k[\cM]$ associated to a monoidal complex $\cM=(M,\Delta,(S_\sigma)_{\sigma\in \Delta})$.
From the algebraic point of view, toric face rings were introduced as a generalization of 
Stanley-Reisner rings, studied by Stanley,
Reisner, Bruns, Ichim, R\"omer and others (see the introductions of~\cite{IR07,TFR1} for example).
From the geometric point of view, Alexeev~\cite{Ale02} introduced another generalization of 
Stanley-Reisner rings, the so called stable toric varieties, obtained by glueing toric varieties (possibly 
not affine) along orbits.

In order to understand residues for varieties with normal crossings singularities, we were forced
to enlarge the category of semi-log canonical singularities to the class of weakly log canonical singularities. 
To see this, let us consider the normal crossings model
$\Sigma=\cup_{i=1}^nH_i\subset \bA^n_\C$, where $H_i:(z_i=0)$ is the $i$-th standard hyperplane.
It is Cohen Macaulay and Gorenstein, and codimension one residues onto components of $\Sigma$
glue to a residue isomorphism 
$
\Res\colon \omega_{\bA^n}(\log\Sigma)|_\Sigma \isoto \omega_\Sigma,
$
where $\omega_\Sigma$ is a dualizing sheaf. It follows that 
$\Sigma$ has semi-log canonical singularities and $\omega_\Sigma\simeq \cO_\Sigma$. The
complement $T=\bA^n\setminus \Sigma$ is the $n$-dimensional torus, which acts naturally on $\bA^n$.
The invariant closed irreducible subvarieties of codimension $p$ are $H_{i_1}\cap \cdots \cap H_{i_p}$ for $i_1<\cdots <i_p$.
A natural way to realize $\Sigma$ as a glueing of smooth varieties (cf.~\cite{Del71}) is to consider 
the decreasing filtration of algebraic varieties
$$
X_1\supset X_2\supset \cdots
$$
where $X_1=\Sigma$ and $X_{p+1}=\Sing(X_p)$ for $p\ge 1$. It turns out that $X_p$ is the union of 
$T$-invariant closed irreducible subvarieties of $\bA^n$ of codimension $p$, that is 
$X_p=\cup_{i_1<\cdots <i_p}H_{i_1}\cap \cdots \cap H_{i_p}$ (the reader may check that $X_p$ is the affine toric variety
associated to the following monoidal complex: lattice $\Z^n$, fan consisting of all faces $\sigma\prec \R^n_{\ge 0}$
of codimension at least $p$, and semigroups $S_\sigma=\Z^n\cap \sigma$).
After extending the filtration with $X_0=\bA^n$, we would like to realize it as a chain of semi-log canonical 
structures and (glueing of) codimension one residues
$$
(\bA^n,\Sigma)\leadsto (\Sigma,0)\leadsto (X_2,0) \leadsto\cdots.
$$
The varieties $X_p$ are weakly normal and Cohen Macaulay, but not nodal in codimension one if $p>1$.
The dualizing sheaf of $X_2$ is not invertible in codimension one, so we cannot define the sheaves $\omega^{[n]}_{X_2}\ (n\in \Z)$,
and $(X_2,0)$ is not semi-log canonical. We observe in this paper that
the filtration may still be viewed as a chain of log structures, provided we enlarge the category of semi-log canonical singularities
to a certain class called {\em weakly log canonical singularities}. We show that for $p>0$, $(X_p,0)$ has weakly log
canonical singularities, $\omega^{[2]}_{(X_p,0)}\simeq \cO_{X_p}$, and codimension one residues onto components of $X_{p+1}$
glue to a residue isomorphism 
$
\Res^{[2]} \colon \omega^{[2]}_{(X_p,0)}| _{X_{p+1}} \isoto \omega^{[2]}_{(X_{p+1},0)}
$
(see Proposition~\ref{mex}).

A semi-log canonical singularity $X$ is defined as a singularity such that 
a) $X$ is $S_2$ and nodal in codimension one, b) certain pluricanonical sheaves $\omega^{[r]}_X$ are invertible,
and c) the induced log structure on the normalization has log canonical singularities. We define weakly log canonical singularities
by replacing axiom a) with a'): $X$ is $S_2$ and weakly normal. The known pluricanonical sheaves $\omega^{[r]}_X$ 
are replaced by certain pluricanonical sheaves $\omega^{[r]}_{(X,0)}$, consisting of rational differential $r$-forms on $X$ which
have constant residues over each codimension one non-normal point of $X$.
Semi-log canonical singularities are a subclass of weakly log canonical singularities, as it turns out that 
$\omega^{[r]}_X= \omega^{[r]}_{(X,0)} \ (r\in 2\Z)$ if $X$ has semi-log canonical singularities. 
Among weakly log canonical singularities, semi-log canonical singularities are those which have multiplicity 
$1$ or $2$ in codimension one.

We classify toric varieties $X=\Spec k[\cM]$ which are weakly (semi-) log canonical. The classification is combinatorial, 
expressed in terms of the log structure on the normalization, and certain incidence numbers of the irreducible components 
in their invariant codimension one subvarieties. The irreducible case is much simpler than the 
reducible case. Along the way, we find a criterion for $X$ to satisfy Serre's property $S_2$, which extends Terai's
criterion~\cite{Ter07}.

A key feature of weakly log canonical singularities is the definition of residues onto lc centers of codimension one.
We make this explicit in the toric case. We also construct residues to higher codimension 
lc centers, under the assumption that the irreducible components of the toric variety are normal. In particular, we obtain
higher codimension residues for normal crossings pairs.

We assume the reader is familiar with~\cite[Section 2]{TFR1}, which may be used to construct examples
of weakly normal toric varieties.

We outline the structure of this paper.
In Section 1 we collect known results on log pairs and codimension one residues, and 
exemplify them in the (normal) toric case. In Section 2, we find a criterion (Theorem~\ref{2.10}) 
for $\Spec k[\cM]$ to satisfy Serre's property $S_2$. The irreducible case was known~\cite{BLR06},
and our criterion generalizes that of Terai~\cite{Ter07}.
The weak normality criterion for $\Spec k[\cM]$ was also known (see~\cite{TFR1} for a survey and references).
In Section 3 we define weakly normal log pairs, and the class of weakly log canonical singularities. 
Compared to semi-log canonical pairs, weakly normal log pairs are allowed boundaries with negative coefficients, 
and a certain locus where it is not weakly log canonical. Hopefully, this will be useful in future applications.
In Section 4, we find a criterion for $\Spec k[\cM]$, endowed with a torus invariant boundary $B$, to be 
a weakly normal log pair (Proposition~\ref{IC} for the irreducible case, Proposition~\ref{wlpC} for the reducible case).
We also investigate the $\LCS$-locus, or non-klt locus of a toric weakly normal pair, which is useful for 
inductive arguments. In Section 5 we construct residues of toric weakly log canonical pairs onto lc centers 
of arbitrary codimension, under the assumption that the irreducible components of the toric variety are normal.
We extend these results to weakly log canonical pairs which are locally analytically isomorphic to such toric
models (Theorem~\ref{nwa}). In particular, we obtain higher codimension residues for normal crossings pairs
(Corollary~\ref{nwaNC}).

\begin{ack} I would like to thank Viviana Ene for useful discussions, and the anonymous referee for suggestions 
and corrections.
\end{ack}


\section{Preliminary on log pairs, codimension one residues}



\subsection{Rational pluri-differential forms on normal varieties}


Let $X/k$ be a normal algebraic variety, irreducible, of dimension $d$.
A prime divisor on $X$ is a codimension one subvariety $P$ in $X$.

A non-zero rational function $f\in k(X)^\times$ induces the principal Weil divisor on $X$
$$
(f)=\dv_X(f)=\sum_P v_P(f)\cdot P,
$$ 
where the sum runs after all prime divisors of $X$. Note that $v_P(f)$ is the maximal
$m\in \Z$ such that $t_P^{-m}f$ is regular at $P$, where $t_P$ is a local parameter at $P$.

A non-zero rational differential $d$-form $\omega\in \wedge^d\Omega^1_{k(X)/k}\setminus 0$
induces a Weil divisor on $X$
$$
(\omega)=\sum_P v_P(\omega)\cdot P,
$$
where $v_P(\omega)$ is the maximal $m\in \Z$ such that $t_P^{-m}\omega$ is regular at $P$, where 
$t_P$ is a local parameter at $P$. If $\omega'\in \wedge^d\Omega^1_{k(X)/k}\setminus 0$,
then $\omega'=f\omega$ for some $f\in k(X)^\times$, and $(\omega')=(f)+(\omega)$.
Therefore the linear equivalence class of $(\omega)$ is an invariant of $X$, called the 
{\em canonical divisor} of $X$, denoted $K_X$. Sometimes we also denote by $K_X$ any divisor
in this class, but this may cause confusion.

Let $r\in \Z$. A non-zero rational $r$-pluri-differential form $\omega\in (\wedge^d\Omega^1_{k(X)/k})^{\otimes r}
\setminus 0$ induces a Weil divisor on $X$
$$
(\omega)=\sum_P v_P(\omega)\cdot P,
$$
where if we write $\omega=f\omega_0^r$ with $f\in k(X)^\times$ and $\omega_0\in\Omega^1_{k(X)/k}
\setminus 0$, we define $(\omega)=(f)+r(\omega_0)$.
This is well defined, and $(\omega)\sim rK_X$.

The following properties hold:
$
(f\omega)=(f)+(\omega), \ 
(\omega_1\omega_2)=(\omega_1)+(\omega_2).
$
Note that rational functions identify with rational differential $0$-forms.

Let $P\subset X$ be a prime divisor. A rational differential $p$-form 
$\omega\in \wedge^p \Omega^1_{k(X)/k}$ has {\em at most a
logarithmic pole} at $P$ if both $\omega$ and $d\omega$ have at most a simple
pole at $P$. Equivalently, there exists a decomposition $\omega=(dt/t)\wedge \omega^{p-1}+\omega^p$,
with $t$ a local parameter at $P$, and $\omega^{p-1},\omega^p$ regular at $P$. 
Define the {\em Poincar\'e residue of $\omega$ at $P$} to be the rational differential form
$$
\Res_P \omega=\omega^{p-1}|_P\in \wedge^{p-1} \Omega^1_{k(P)/k}.
$$
The definition is independent of the decomposition. It is additive in $\omega$,
and if $f\in k(X)$ is regular at $P$, then $f|_P\in k(P)$ and $\Res_P(f\omega)=f|_P\cdot \Res_P(\omega)$.

Note that $\omega \in \wedge^d \Omega^1_{k(X)/k}$ automatically satisfies $d\omega=0$. Therefore $\omega$
has at most a logarithmic pole at $P$ if and only if $(\omega)+P\ge 0$ near $P$.


\subsection{Log pairs and varieties}


Let $X/k$ be a normal algebraic variety. Let $B$ be a $\Q$-Weil divisor on $X$:
a formal sum of prime divisors on $X$, with rational coefficients, or equivalently,
the formal closure of a $\Q$-Cartier divisor defined on the smooth locus of $X$.
For $n\in \Z$, define a coherent $\cO_X$-module $\omega^{[n]}_{(X/k,B)}$ by setting for 
each open subset $U\subseteq X$
$$
\Gamma(U,\omega^{[n]}_{(X/k,B)})=\{0\}\cup 
\{\omega\in (\wedge^d\Omega^1_{k(X)/k})^{\otimes n}; (\omega)+nB\ge 0\text{ on } U \}.
$$

On $V=X\setminus (\Sing X\cup \Supp B)$, $\omega^{[n]}_{(X/k,B)}|_V$ coincides with 
the invertible $\cO_V$-module $(\wedge^d\Omega^1_{V/k})^{\otimes n}$.

\begin{lem} Let $U\subseteq X$ be an open subset.
Let $\omega\in (\wedge^d\Omega^1_{k(X)/k})^{\otimes n} \setminus 0$ be a non-zero rational 
pluri-differential form. Then $1\mapsto \omega$ induces an isomorphism 
$\cO_U\isoto \omega^{[n]}_{(X/k,B)}|_U$ if and only if $(\omega)+\lfloor nB\rfloor=0$ on $U$.
\end{lem}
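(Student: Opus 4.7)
The plan is to reduce the claim to a local check at every codimension-one point of $X$ contained in $U$. Since $X$ is normal and $\omega^{[n]}_{(X/k,B)}$ is cut out from the constant sheaf of rational pluri-differential forms by valuative inequalities at prime divisors, it is rank-one reflexive (divisorial), hence $S_2$, and coincides with the pushforward of its restriction to any open subset whose complement has codimension at least two. Consequently, the morphism $\phi\colon \cO_U\to \omega^{[n]}_{(X/k,B)}|_U$, $1\mapsto \omega$, is an isomorphism if and only if the stalk map $\phi_P$ is an isomorphism for every height-one prime $P$ of $X$ meeting $U$. Injectivity of $\phi$ is automatic, since $\omega\ne 0$ and the target is a subsheaf of the constant sheaf of rational pluri-differential forms, hence torsion-free.

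Now fix such a $P$ with local parameter $t_P$. Since $\cO_{X,P}$ is a discrete valuation ring, the stalk $(\omega^{[n]}_{(X/k,B)})_P$ is a free module of rank one. Because $v_P$ takes integer values on non-zero rational pluri-differential forms, the inequality $v_P(\eta)+nB(P)\ge 0$ that defines the stalk is equivalent to $v_P(\eta)\ge -\lfloor nB(P)\rfloor$; hence a generator of the stalk is any $\eta_0$ with $v_P(\eta_0)=-\lfloor nB(P)\rfloor$. For $\omega$ to land in the stalk one needs $v_P(\omega)+\lfloor nB(P)\rfloor\ge 0$, and for $\omega$ to generate the stalk (i.e.\ for $\phi_P$ to be surjective) one further needs $v_P(\omega)$ to attain the minimum, which is $v_P(\omega)+\lfloor nB(P)\rfloor\le 0$. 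The two together force the equality $v_P(\omega)+\lfloor nB(P)\rfloor=0$ at $P$.

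Assembling the codimension-one equalities yields the stated global condition $(\omega)+\lfloor nB\rfloor=0$ on $U$. I do not expect any serious obstacle: the only step beyond routine valuation bookkeeping and the passage from $nB$ to $\lfloor nB\rfloor$ afforded by the integrality of $v_P$ is the reduction from a global isomorphism check to a codimension-one stalk check, which is taken care of by the divisorial/$S_2$ nature of $\omega^{[n]}_{(X/k,B)}$ noted above.
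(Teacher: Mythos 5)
Your argument is correct and is essentially the paper's own proof written out in more detail: the paper identifies $\omega^{[n]}_{(X/k,B)}|_U$ with $\cO_U(D)$ for $D=(\omega)+\lfloor nB\rfloor$ (the floor appearing for the same integrality-of-valuations reason you give) and invokes normality to conclude $\cO_U=\cO_U(D)$ iff $D=0$, which is exactly your reduction to codimension-one stalks via reflexivity plus the DVR computation. No substantive difference in approach.
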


\begin{proof} Indeed, the homomorphism is well defined only if $D=(\omega)+\lfloor nB\rfloor |_U\ge 0$.
The homomorphism is an isomorphism if and only if $\cO_U=\cO_U(D)$, that is $D=0$,
since $U$ is normal.
\end{proof}

The choice of a non-zero rational top differential form on $X$ induces an isomorphism between 
the sheaf of rational pluri-differentials $\omega^{[n]}_{(X/k,B)}$ and the sheaf of rational functions $\cO_X(nK_X+\lfloor nB\rfloor)$.

We have a natural multiplication map
$\omega^{[m]}_{(X/k,B)} \otimes_{\cO_X} \omega^{[n]}_{(X/k,B)} \to \omega^{[m+n]}_{(X/k,B)}$,
which is an isomorphism if $mB$ has integer coefficients and $\omega^{[m]}_{(X/k,B)}$ is invertible. 
In particular, if $rB$ has integer coefficients and $\omega^{[r]}_{(X/k,B)}$ is invertible, then 
$(\omega^{[r]}_{(X/k,B)})^{\otimes n}\isoto \omega^{[rn]}_{(X/k,B)}$ for all $n\in \Z$, and 
the graded $\cO_X$-algebra $\oplus_{n\in \N} \omega^{[n]}_{(X/k,B)}$ is finitely generated.

\begin{defn}
A {\em log pair} $(X/k,B)$ consists of a normal algebraic variety $X/k$ and the (formal) closure $B$ 
of a $\Q$-Weil divisor on the smooth locus of $X/k$, subject to the following property: there exists an 
integer $r\ge 1$ such that $rB$ has integer coefficients and the $\cO_X$-module $\omega^{[r]}_{(X/k,B)}$ 
is locally free (i.e. invertible).

If $B$ is effective, we call $(X/k,B)$ a {\em log variety}.
\end{defn}


\subsection{Log canonical singularities, lc centers}

We assume log resolutions are known to exist (e.g. if $\Char(k)=0$, by Hironaka, or in the
category of toric log pairs). Let $(X/k,B)$ be a log pair. There exists a {\em log resolution} 
$\mu\colon X'\to (X,B_X)$,
that is a desingularization $\mu\colon X'\to X$ such that $\Exc\mu\cup \mu^{-1}(\Supp B)$ is a 
normal crossings divisor. Let $r\ge 1$ such that $rB$ has integer coefficients and $\omega^{[r]}_{(X/k,B)}$
is invertible. If $\omega$ is a local generator, then $\mu^*\omega$ is a local generator of 
$\omega^{[r]}_{(X'/k,B_{X'})}$, where $B_{X'}$ is a $\Q$-divisor on $X'$ such that $rB_{X'}$ has
integer coefficients (locally, $B_{X'}=-\frac{1}{r}(\mu^*\omega)$). 
The $\Q$-divisor $B_{X'}$ may not be effective even if $B$ is effective, and this is the reason
why we consider log pairs, although we are mainly interested with log varieties.

We obtain a log crepant desingularization 
$
\mu\colon (X',B_{X'})\to (X,B),
$
with $X'$ smooth and $\Supp(B_{X'})$ a normal crossings divisor, and an isomorphism
$
\mu^*\omega^{[r]}_{(X/k,B)}\isoto \omega^{[r]}_{(X',B_{X'})}.
$

If the coefficients of $B_{X'}$
are at most $1$, we say that $(X,B)$ has {\em log canonical singularities}. This definition is 
independent of the choice of $\mu$. If $B_{X'}^{>1}$ denotes the part of $B_{X'}$ which has 
coefficients strictly larger than $1$, then $\mu(\Supp (B_{X'}^{>1}))$ is a closed subset of $X$,
called the {\em non-lc locus} of $(X,B)$, denoted $(X,B)_{-\infty}$. It is the complement in $X$
of the largest open subset where $(X,B)$ has log canonical singularities. 
An {\em lc center} of $(X,B)$ is either $X$, or $\mu(E)$ for some prime divisor $E$ on some log resolution 
$X'\to X$, with $\mult_E(B_{X'})=1$ and $\mu(E)\not\subseteq (X,B)_{-\infty}$. If 
$\mu\colon (X',B_{X'})\to (X,B)$ is a log resolution such that $B_{X'}^{=1}$ has simple normal crossings,
the lc centers of $(X,B)$ different from $X$ are exactly the images, not contained in $(X,B)_{-\infty}$, 
of the intersections of the components of $B_{X'}^{=1}$. In particular, $(X,B)$ has only finitely many lc centers.


\subsection{Residues in codimension one lc centers, different}


Let $(X/k,B)$ be a log pair, let $E\subset X$ be a prime divisor with $\mult_E(B)=1$.
Let $\omega\in \Gamma(X,\omega^{[l]}_{(X/k,B)})$. Near the generic point of $E$,  
$\omega^{[1]}_{(X/k,B)}$ is invertible, say with generator $\omega_0$.
We can write $\omega=f\omega_0^{\otimes l}$, with $f\in k(X)^\times$ regular at the generic
point of $E$. Define the {\em residue of $\omega$ at $E$} to be the rational pluri-differential form
$$
\Res^{[l]}_E \omega=f|_E\cdot (\Res_E \omega_0)^{\otimes l}\in (\wedge^{d-1}\Omega^1_{k(E)/k})^{\otimes l}.
$$
The definition is independent of the choice of $f$ and $\omega_0$. 
It is additive in $\omega$, and if $g\in k(X)$ is regular at the generic point of $E$, 
then $\Res^{[l]}_E(g\cdot \omega)=g|_E\cdot \Res^{[l]}_E\omega$.
The residue operation induces a natural map
$$
\Res_E^{[l]}\colon \omega^{[l]}_{(X/k,B)} \to \omega^{[l]}_{k(E)/k},
$$
which is compatible with multiplication of pluri-differential rational forms.

Let $r\ge 1$ such that $rB$ has integer coefficients and $\omega^{[r]}_{(X/k,B)}$ is invertible. 
Let $E^n\to E$ be the normalization and $j\colon E^n \to X$ the induced morphism. 
Choose an open subset $U\subseteq  X$ which intersects $E$, and a nowhere zero section $\omega$ of 
$\omega^{[r]}_{(X/k,B)}|_U$. Then $\Res^{[r]}_E \omega$ is a non-zero rational pluri-differential form on $E^n$. 
The identity 
$$
(\Res^{[r]}_E \omega )|_{j^{-1}(U)}+D|_{j^{-1}(U)}=0
$$ 
defines a Weil divisor $D$ on $j^{-1}(U)$. It does not depend on the choice of $\omega$,
and it glues to a Weil divisor $D$ on $E^n$. The $\Q$-Weil divisor 
$
B_{E^n}=\frac{1}{r}D
$
is called the {\em different of $(X,B)$ on $E^n$}. It follows that $rB_{E^n}$ has integer coefficients,
$\omega^{[r]}_{(E^n/k,B_{E^n})}$ is invertible, and the residue at $E$ induces an isomorphism
$$
\Res^{[r]}_E \colon j^*\omega^{[r]}_{(X/k,B)} \isoto \omega^{[r]}_{(E^n/k,B_{E^n})}.
$$
If $l\ge 1$ is an integer, then $\omega^{[rl]}_{(X/k,B)}$ is again invertible. It defines the same
different, and the isomorphism $\Res^{[rl]}_E$ identifies with $(\Res^{[r]}_E)^{\otimes l}$.
We deduce that the different $B_{E^n}$ is independent of the choice of $r$, and $(E^n/k,B_{E^n})$ is 
again a log pair. The following properties hold:
\begin{itemize}
\item  If $B\ge 0$, then $B_{E^n}\ge 0$.
\item Let $B'$ such that $\mult_E B'=1$ and $B'-B$ is $\Q$-Cartier. Then $B'_{E^n}=B_{E^n}+j^*(B'-B)$.
\end{itemize}


\subsection{Volume forms on the torus}


Let $T/k$ be a torus, of dimension $d$. Then $T=\Spec k[M]$ for some lattice $M$. 
Let $\cB=(m_1,\ldots,m_d)$ be an ordered basis of the lattice $M$. Then 
$$
\omega_\cB=\frac{d\chi^{m_1}}{\chi^{m_1}}\wedge \cdots \wedge \frac{d\chi^{m_d}}{\chi^{m_d}}
$$
is a $T$-invariant global section of $\wedge^d \Omega^1_{T/k}$, which is nowhere zero.
It induces an isomorphism 
$$
\cO_T\isoto \wedge^d \Omega^1_{T/k}.
$$
Let $\cB'=(m'_1,\ldots,m'_d)$ be another ordered basis of $M$. Then 
$\omega_\cB=\epsilon \cdot \omega_{\cB'}$, where the sign $\epsilon= \pm 1$ is computed either
by the identity $\wedge_{i=1}^d m_i=\epsilon \cdot \wedge_{i=1}^d m'_i$ in $\wedge^dM$,
or as the determinant of the matrix $(a_{ij})$ given by $m_i=\sum_j a_{ij}m'_j$. 
Therefore $\omega_\cB$ depends on the choice of the ordered basis of $M$ only up to a
sign. If the sign does not matter, we denote $\omega_\cB$ by $\omega_T$ or $\omega_M$.
For example, if $n$ is an {\em even} integer, we denote $\omega_\cB^{\otimes n}$ by $\omega_T^{\otimes n}$.

The above trivialization of $\wedge^d \Omega^1_{T/k}$ depends on the choice 
of the ordered basis up to a sign. Its invariant form is $\cO_T\otimes_\Z \wedge^dM\isoto \wedge^d\Omega^1_{T/k}$
(induced by $\cO_T\otimes_\Z M \isoto \Omega^1_{T/k}$). The form $\omega_\cB$ depends
in fact only on the basis element $m_1\wedge\cdots \wedge m_d$ of $\wedge^d M\simeq \Z$.
We say that $\omega_\cB$ is the {\em volume form induced by an orientation of $M$}.

Let $M'\subseteq M$ be a sublattice of finite index $e$. It corresponds to a finite surjective
toric morphism $\varphi\colon T=\Spec k[M] \to T'=\Spec k[M']$. If $\cB'$ is an ordered basis of 
$M'$, then $\varphi^*\omega_{\cB'}=(\pm e)\cdot \omega_\cB$.


\subsection{Affine toric log pairs}

Let $T\subseteq X$ be a normal affine equivariant embedding of a torus.
Thus $T=\Spec k[M]$ for some lattice $M$, and $X=\Spec k[M\cap \sigma]$ for
a rationally polyhedral cone $\sigma\subseteq M_\R$ which generates $M_\R$.
The complement $\Sigma_X= X\setminus T$ is called the toric boundary of $X$. We have
$\Sigma_X=\cup_i E_i$, where $E_i$ are the invariant codimension one subvarieties of $X$. 
Each $E_i$ is of the form $\Spec k[M\cap \tau_i]$, where $\tau_i\prec \sigma$ is a 
codimension one face. Let $e_i\in N\cap \sigma^\vee$ be the primitive vector in the dual 
lattice $N$ which cuts out $\tau_i$, that is $\sigma^\vee\cap \tau_i^\perp\cap N=\N e_i$.

The volume form $\omega_\cB$ on $T$, induced by an orientation of $M$,
extends as a rational top differential form on $X$.
Let $E_i$ be an invariant prime divisor on $X$. As a subvariety, $E_i=\Spec k[M\cap \tau_i]$ is again toric and normal. 
Denote by $M_i$ the lattice $M\cap \tau_i-M\cap \tau_i=M\cap (\tau_i-\tau_i)$. Let $\cB_i=(m'_1,\ldots,m'_{d-1})$ be an 
ordered basis of $M_i$. Choose $u\in M$ such that $\langle e_i,u\rangle=1$. Then $\cB_i'=(u,m'_1,\ldots,m'_{d-1})$ 
becomes an ordered basis of $M$, and $\omega_\cB=\epsilon_i\cdot \omega_{\cB'_i}$ for some $\epsilon_i = \pm 1$.
The sign $\epsilon_i$ does not depend on the choice of $u$. Since $\chi^u$ is a local parameter at the generic point of 
$E_i$, and $\omega_{\cB'_i}=\frac{d\chi^u}{\chi^u}\wedge \omega_{\cB_i}$, we obtain
$$
\Res_{E_i} \omega_\cB=\epsilon_i \cdot \omega_{\cB_i}.
$$
Therefore $\omega_\cB$ has exactly logarithmic poles along the invariant prime divisors of $X$, and the induced Weil divisor 
on $X$ is 
$$
(\omega_\cB)=-\Sigma_X.
$$

\begin{lem}
$(X/k,\Sigma_X)$ is a log variety with lc singularities.
\end{lem}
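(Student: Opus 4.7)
The plan is to leverage the computation that immediately precedes the statement. Since we have shown that the volume form $\omega_\cB$ satisfies $(\omega_\cB)=-\Sigma_X$ on $X$, the identity $(\omega_\cB)+\Sigma_X=0$ together with the earlier lemma (applied with $n=1$, $B=\Sigma_X$) yields that the assignment $1\mapsto \omega_\cB$ defines an isomorphism
\[
\cO_X\isoto \omega^{[1]}_{(X/k,\Sigma_X)}.
\]
In particular $\omega^{[1]}_{(X/k,\Sigma_X)}$ is invertible, $\Sigma_X$ has integer coefficients (all equal to $1$), and since $\Sigma_X\ge 0$, the pair $(X/k,\Sigma_X)$ is a log variety in the sense of the definition.

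To verify log canonicity, I would construct a toric log resolution by choosing a smooth fan refining $\sigma$; this yields a projective birational toric morphism $\mu\colon X'\to X$ with $X'$ smooth and $\Sigma_{X'}$ a simple normal crossings divisor. The key observation is that $\mu$ is an isomorphism over the torus $T$, so both $X$ and $X'$ contain the same open torus with the same volume form. Thus $\mu^*\omega_\cB$ is simply $\omega_\cB$ viewed as a rational top form on $X'$, and the same computation of residues (now applied to each invariant prime divisor $E'_j\subset X'$) shows
\[
(\mu^*\omega_\cB)=-\Sigma_{X'}\quad\text{on }X'.
\]

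Consequently, the boundary $B_{X'}$ defined by $B_{X'}=-(\mu^*\omega_\cB)$ in the log crepant pullback equals $\Sigma_{X'}$, whose coefficients are all equal to $1$. By the definition of log canonicity, $(X,\Sigma_X)$ has log canonical singularities. There is no serious obstacle here; the only subtlety worth underlining is that, because every toric resolution is an isomorphism over $T$, the volume form of the torus serves simultaneously as a generator of $\omega^{[1]}_{(X,\Sigma_X)}$ and of $\omega^{[1]}_{(X',\Sigma_{X'})}$, so the discrepancies are computed without any further calculation.
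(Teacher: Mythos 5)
Your proof is correct and follows essentially the same route as the paper: trivialize $\omega^{[1]}_{(X/k,\Sigma_X)}$ by $\omega_\cB$ using $(\omega_\cB)+\Sigma_X=0$, then pull back along a toric desingularization and observe $(\mu^*\omega_\cB)+\Sigma_{X'}=0$, so the log crepant boundary is $\Sigma_{X'}$ with coefficients $1$.
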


\begin{proof} We have $\omega^{[1]}_{(X/k,\Sigma_X)}=\cO_X\cdot \omega_\cB$, so $\omega^{[1]}_{(X/k,\Sigma_X)}\simeq \cO_X$. 
Let $\mu\colon X'\to X$ be a toric desingularization. Let $\Sigma_{X'}=X'\setminus T$ be the toric boundary
of $X'$, which is the union of its invariant codimension one subvarieties. Then $X'$ is smooth, $\Sigma_{X'}$ is a 
simple normal crossings divisor, and $(\mu^*\omega_\cB)+\Sigma_{X'}=0$. 
Therefore $(X/k,\Sigma)$ has log canonical singularities.
\end{proof}

The different of $(X/k,\Sigma_X)$ on $E_i$ is $\Sigma_{E_i}$, and for every $n\in \Z$ we have residue isomorphisms
$$
\Res^{[n]}_{E_i}\colon \omega^{[n]}_{(X/k,\Sigma_X)}|_{E_i}\isoto \omega^{[n]}_{(E_i/k,\Sigma_{E_i})}.
$$
Choosing bases $\cB,\cB_i$ to trivialize the sheaves, the residue isomorphism becomes
$\epsilon_i^n\cdot (\cO_X|_{E_i}\isoto \cO_{E_i})$.

Let $B$ be a $T$-invariant $\Q$-Weil divisor on $X$. That is $B=\sum_i b_iE_i$ with $b_i\in \Q$. We compute 
$$
\omega^{[n]}_{(X/k,B)}=\cO_X(\lfloor -n\Sigma_X+nB\rfloor)\cdot \omega_\cB ^{\otimes n}.
$$

Recall that $X$ has a unique closed orbit, associated to the smallest face of $\sigma$, which
is $\sigma\cap(-\sigma)$, or equivalently, the largest vector subspace contained in $\sigma$.

\begin{lem}\label{NI} Let $n\in \Z$. The following properties are equivalent:
\begin{itemize}
\item[a)] $\omega^{[n]}_{(X/k,B)}$ is invertible at some point $x$, which belongs to the closed orbit of $X$.
\item[b)] $\omega^{[n]}_{(X/k,B)}\simeq \cO_X$.
\item[c)] There exists $m\in M$ such that $(\chi^m)+\lfloor n(-\Sigma_X+B) \rfloor=0$ on $X$.
\end{itemize}
\end{lem}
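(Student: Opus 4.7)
The plan is to work through the formula $\omega^{[n]}_{(X/k,B)} = \cO_X(D) \cdot \omega_\cB^{\otimes n}$, where $D = \lfloor n(-\Sigma_X + B)\rfloor = \sum_i d_i E_i$ is a $T$-invariant integral Weil divisor, so that conditions (a), (b), (c) become statements about the $T$-equivariant reflexive sheaf $\cO_X(D)$. Since $(\chi^m) = \sum_i \langle e_i, m\rangle E_i$, condition (c) says exactly that there exists $m \in M$ with $(\chi^m) + D = 0$ on $X$, i.e., $\langle e_i, m\rangle = -d_i$ for every $i$.

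The implications (c) $\Rightarrow$ (b) $\Rightarrow$ (a) are routine. If $(\chi^m) + D = 0$, then the lemma above (applied to $\omega := \chi^m \omega_\cB^{\otimes n}$, for which $(\omega) + \lfloor nB\rfloor = (\chi^m) + D = 0$) yields an isomorphism $\cO_X \isoto \omega^{[n]}_{(X/k,B)}$; and global invertibility certainly implies invertibility at any prescribed point.

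For (a) $\Rightarrow$ (c) I would argue in two steps. First, upgrade invertibility at a single point of the closed orbit $Z \subset X$ to invertibility on all of $X$. The invertibility locus of $\cO_X(D)$ is a $T$-stable open subset (since $D$ is $T$-invariant), meets $Z$ by hypothesis, and hence contains $Z$; but $Z$ is the unique closed $T$-orbit of $X$, so every nonempty $T$-invariant closed subset of $X$ contains $Z$, and the invertibility locus must therefore equal $X$. In particular $D$ is Cartier on $X$.

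Second, trivialize $\cO_X(D)$ by a character. Since $X$ is an affine normal toric variety, $\Pic(X) = 0$, so $D$ is principal: $D = -(f)$ for some $f \in k(X)^\times$. The divisor $(f) = -D$ is $T$-invariant, hence $f$ is a $T$-semi-invariant rational function, which in $k(X) = k(M)$ forces $f = c\chi^m$ with $c \in k^\times$ and $m \in M$; thus $(\chi^m) + D = 0$, which is (c). The main technical input is precisely this second step -- promoting the Cartier property of $D$ to a character presentation; everything else follows formally from $T$-equivariance. If one prefers a self-contained route avoiding $\Pic(X)=0$, one can work at the generic point $\eta$ of $Z$, extract a $T$-eigenvector generator $\chi^m$ of the rank-one free $\cO_{X,\eta}$-module $\cO_X(D)_\eta$, and then propagate the local identity $(\chi^m) + D = 0$ to a global one, using that every invariant prime $E_i$ contains $Z$ because $\sigma \cap (-\sigma)$ is the smallest face of $\sigma$.
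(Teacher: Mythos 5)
Your proof is correct and follows essentially the same route as the paper: reduce to the $T$-equivariant sheaf $\cO_X(\lfloor n(-\Sigma_X+B)\rfloor)$, get (c) $\Rightarrow$ (b) $\Rightarrow$ (a) from the trivialization $\chi^m\omega_\cB^{\otimes n}$, and for (a) $\Rightarrow$ (c) use that an equivariant sheaf invertible at a point of the closed orbit is globally trivialized by a character. The only difference is that the paper delegates this last step to a citation of \cite{KKMS73}, whereas you unpack it (invertibility locus is $T$-stable and must contain the unique closed orbit, hence is all of $X$; a $T$-invariant Cartier divisor on a normal affine toric variety is $\dv(\chi^m)$), which is a correct expansion of the same argument.
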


\begin{proof} $a) \Longrightarrow c)$ The $T$-equivariant sheaf $\cO_X(\lfloor n(-\Sigma_X+B)\rfloor)$ is invertible near $x$.
Since $x$ belongs to the closed orbit of $X$, the sheaf is trivial, and 
there exists $m\in M$ with $(\chi^m)+\lfloor n(-\Sigma_X+B)\rfloor=0$ on $X$~\cite{KKMS73}. 

$c) \Longrightarrow b)$ $\chi^m\omega_\cB^{\otimes n}$ is a nowhere zero global
section of $\omega^{[n]}_{(X/k,B)}$.

$b) \Longrightarrow a)$ is clear.
\end{proof}

\begin{prop} $(X/k,B)$ is a log pair if and only if and only if there exists 
$\psi\in M_\Q$ such that $\langle e_i,\psi\rangle=1-\mult_{E_i}(B)$ for all $i$.
Moreover, $(X/k,B)$ has lc singularities if and only if the coefficients of $B$ are at most $1$,
if and only if $\psi\in \sigma$.
\end{prop}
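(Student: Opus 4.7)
The plan is to decode the log-pair condition via Lemma~\ref{NI}: invertibility of $\omega^{[r]}_{(X/k,B)}$ amounts to the existence of a character $\chi^m$ killing a certain floor divisor, and dividing by $r$ will produce the required $\psi$.

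For the first equivalence, suppose $(X/k,B)$ is a log pair and pick $r\geq 1$ with $rB$ integral and $\omega^{[r]}_{(X/k,B)}$ invertible. By Lemma~\ref{NI} there exists $m\in M$ with $(\chi^m) + \lfloor r(-\Sigma_X + B)\rfloor = 0$; since each $rb_i$ is an integer the floor is redundant, and reading off the coefficient at $E_i$ yields $\langle e_i, m\rangle = r(1-b_i)$. Then $\psi := m/r \in M_\Q$ has the required pairings. Conversely, given such $\psi$, choose $r\geq 1$ with $m := r\psi \in M$; then $r(1-b_i) = \langle e_i, m\rangle \in \Z$ shows $rB$ is integral, and the same identity says $(\chi^m) + \lfloor r(-\Sigma_X + B)\rfloor = 0$, so Lemma~\ref{NI} gives $\omega^{[r]}_{(X/k,B)} \simeq \cO_X$, hence $(X/k,B)$ is a log pair.

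For the equivalence between ``$b_i\leq 1$ for all $i$'' and $\psi\in\sigma$: the $e_i$ are by definition the primitive generators of the rays of $\sigma^\vee$, so $\sigma = (\sigma^\vee)^\vee = \{u \in M_\R : \langle e_i, u\rangle \geq 0 \text{ for all } i\}$, and $\langle e_i, \psi\rangle = 1 - b_i \geq 0$ is exactly $b_i \leq 1$. For the lc equivalence, take a toric log resolution $\mu\colon X' \to X$ corresponding to a smooth regular subdivision of $\sigma^\vee$; let $\{e'_j\}\subset \sigma^\vee\cap N$ be the primitive generators of its rays, and $E'_j\subset X'$ the corresponding invariant prime divisors. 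Crucially, the subdivision preserves the original rays $e_i$, with the strict transform of $E_i$ corresponding to $e_i$. Since $X$ and $X'$ share the torus $T$, the generator $\chi^m\omega_\cB^{\otimes r}$ of $\omega^{[r]}_{(X/k,B)}$ pulls back to a generator of $\omega^{[r]}_{(X'/k,B_{X'})}$; applying Lemma~\ref{NI} on $X'$ gives $\mult_{E'_j}(B_{X'}) = 1 - \langle e'_j, \psi\rangle$. Thus $(X,B)$ is lc on $\mu$ iff $\langle e'_j, \psi\rangle \geq 0$ for every $j$. If $(X,B)$ is lc, the pairings with the original $e_i$ (which sit among the $e'_j$) already give $b_i \leq 1$; conversely, if $\psi\in\sigma$ then $\langle v, \psi\rangle \geq 0$ for every $v\in\sigma^\vee$, in particular for every $e'_j$, so $(X,B)$ is lc.

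The only real subtlety is the floor-function bookkeeping in the first step, resolved by choosing $r$ to clear the denominator of $\psi$ (which automatically makes $rB$ integral via $rb_i = r - \langle e_i, r\psi\rangle$). The rest is pure cone duality together with the observation that a toric log resolution refines the fan without discarding rays, so the original $e_i$ remain among the $e'_j$ on the resolved side.
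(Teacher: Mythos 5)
Your proof is correct. The first equivalence (log pair $\iff$ existence of $\psi$) is exactly the paper's argument: invoke Lemma~\ref{NI} to produce $m$ with $(\chi^m)+\lfloor r(-\Sigma_X+B)\rfloor=0$, read off $\langle e_i,m\rangle=r(1-b_i)$, set $\psi=m/r$; and conversely clear denominators of $\psi$ and apply Lemma~\ref{NI} again. Where you diverge is the ``moreover'' part. The paper argues: $\psi\in\sigma \iff \langle e_i,\psi\rangle\ge 0$ for all $i \iff b_i\le 1$ for all $i \iff B\le\Sigma_X$, and then concludes lc-ness by monotonicity from the previously established lemma that $(X/k,\Sigma_X)$ is lc; the converse is just the general fact that an lc pair has boundary coefficients at most $1$. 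You instead compute log discrepancies directly on a toric log resolution: $\mult_{E'_j}(B_{X'})=1-\langle e'_j,\psi\rangle$ for the primitive generators $e'_j$ of the rays of the subdivision of $\sigma^\vee$, so lc-ness is equivalent to $\langle e'_j,\psi\rangle\ge 0$ for all $j$, which for $\psi\in\sigma=(\sigma^\vee)^\vee$ is automatic, and which conversely forces $b_i\le 1$ because the extremal rays $\R_+e_i$ survive in any subdivision. Your route is more self-contained (it essentially reproves inline the paper's subsequent lemma that $a(E_e;X,B)=\langle e,\psi\rangle$ for $e\in N^{prim}\cap\sigma^\vee$), at the cost of having to justify the small combinatorial point about extremal rays persisting under subdivision, which you correctly flag; the paper's route is shorter because it leans on the already-proved statement about $(X/k,\Sigma_X)$.
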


\begin{proof} Suppose $(X/k,B)$ is a log pair. There exists $r\ge 1$ such that $rB$ has integer
coefficients and $\omega^{[r]}_{(X/k,B)}$ is invertible. Then there exists $m\in M$ with $(\chi^m)+\lfloor r(-\Sigma_X+B) \rfloor=0$ on $X$.
That is $\langle e_i,m\rangle=r(1-\mult_{E_i}(B))$ for every $i$. Then $\psi=\frac{1}{r}m$ satisfies the desired properties.

Conversely, let $\psi\in M_\Q$ with $\langle e_i,\psi\rangle=1-\mult_{E_i}(B)$ for all $i$. Let
$r\ge 1$ with $r\psi\in M$. In particular, $rB$ has integer coefficients. Since $(\chi^{r\psi})+r(-\Sigma_X+B)=0$,
$\omega^{[r]}_{(X/k,B)}\simeq \cO_X$.

The above proof also shows that $rB$ has integer coefficients and $\omega^{[r]}_{(X/k,B)}$ is invertible
if and only if $r\psi\in M$. 

Suppose $\psi\in \sigma$. Since $\{ \R_+e_i\}_i$ are the extremal rays of $\sigma^\vee$, this is equivalent 
to $\langle e_i,\psi\rangle\ge 0$ for all $i$, which in turn is equivalent to $\mult_{E_i}(B)\le 1$ for all $i$,
that is $B\le \Sigma_X$. Since $(X/k,\Sigma_X)$ has log canonical singularities, so does $(X,B)$.

Conversely, suppose $(X/k,B)$ has log canonical singularities. Then the coefficients of $B$ are at most $1$,
that is $\psi\in \sigma$.
\end{proof}

We call $(X/k,B)$ a {\em toric (normal) log pair}, and $\psi\in M_\Q$ the {\em log discrepancy function} of the 
toric log pair $(X/k,B)$. The log discrepancy function is unique only up to an element of 
$M_\Q\cap \sigma\cap (-\sigma)$. It uniquely determines the boundary, by the formula
$B=\sum_i(1-\langle e_i,\psi\rangle)E_i$. The terminology derives from the following property:

\begin{lem}
Let $(X/k,B)$ be a toric log pair. Each $e\in N^{prim}\cap \sigma^\vee$ induces a toric valuation
$E_e$ over $X$, with log discrepancy $a(E_e;X,B)=\langle e,\psi\rangle$.
\end{lem}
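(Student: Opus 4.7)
The plan is to exhibit $E_e$ as an invariant prime divisor on a toric partial resolution $\mu\colon X'\to X$, and then read off the log discrepancy from the pullback of a trivializing section of $\omega^{[r]}_{(X/k,B)}$. Concretely, $X=\Spec k[M\cap\sigma]$ corresponds dually to the cone $\sigma^\vee\subseteq N_\R$, whose rays are the $\R_{\ge 0}e_i$. Since $e\in N^{prim}\cap\sigma^\vee$, the ray $\R_{\ge 0}e$ sits inside $\sigma^\vee$, and star-subdividing $\sigma^\vee$ along this ray produces a refinement that induces a proper birational toric morphism $\mu\colon X'\to X$ with $X'$ normal and with $e$ a primitive ray generator of the new fan. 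Let $E_e\subseteq X'$ be the corresponding invariant prime divisor; the other invariant prime divisors on $X'$ correspond to primitive ray generators $e'$ of the subdivision, and together they form the toric boundary $\Sigma_{X'}$. (In the degenerate case where $e=e_i$ for some $i$, the subdivision is trivial, $X'=X$, and $E_e=E_i$.)

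Next I would unwind the definition of $B_{X'}$. Choose $r\ge 1$ with $r\psi\in M$. By the proof of the previous proposition, $\omega=\chi^{r\psi}\omega_\cB^{\otimes r}$ is a nowhere-zero global section of $\omega^{[r]}_{(X/k,B)}$, with divisor
\[
(\omega)=(\chi^{r\psi})-r\Sigma_X=-rB.
\]
Since $\mu$ is birational, $\mu^*\omega$ is the same rational pluri-form viewed on $X'$, and by the log-crepancy property $\mu^*\omega^{[r]}_{(X/k,B)}\isoto \omega^{[r]}_{(X'/k,B_{X'})}$ it is a nowhere-zero global section of $\omega^{[r]}_{(X'/k,B_{X'})}$. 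Reading off the order of vanishing at an arbitrary invariant prime divisor $E_{e'}\subseteq X'$ with primitive generator $e'$, we find
\[
v_{E_{e'}}(\mu^*\omega)=\langle e',r\psi\rangle -r,
\]
and the defining identity $(\mu^*\omega)+rB_{X'}=0$ on $X'$ then yields $\mult_{E_{e'}}(B_{X'})=1-\langle e',\psi\rangle$.

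Specializing to $e'=e$, the divisor $E_e$ appears in $B_{X'}$ with coefficient $1-\langle e,\psi\rangle$, and therefore
\[
a(E_e;X,B)=1-\mult_{E_e}(B_{X'})=\langle e,\psi\rangle,
\]
as claimed. There is no genuine obstacle in the argument; the only point that deserves a moment's care is log-crepancy of the toric modification $\mu$, but this is automatic because $\mu$ is birational (so $\mu^*\omega$ equals $\omega$ as a rational form on the common function field), and the boundary $B_{X'}$ is then defined on $X'$ precisely so as to make $\mu^*\omega$ a global trivialization of $\omega^{[r]}_{(X'/k,B_{X'})}$.
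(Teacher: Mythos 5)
Your proof is correct and follows essentially the same route as the paper: both pass to a toric proper birational model $\mu\colon X'\to X$ obtained from a subdivision of $\sigma^\vee$ containing $\R_+e$ as a ray, pull back the trivializing section $\chi^{r\psi}\omega_\cB^{\otimes r}$ of $\omega^{[r]}_{(X/k,B)}$, and read off $\mult_{E_e}(B_{X'})=1-\langle e,\psi\rangle$ from the identity $(\mu^*\omega)+rB_{X'}=0$. The only difference is that you spell out the divisor computation and the degenerate case $e=e_i$, which the paper leaves implicit.
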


\begin{proof}
Let $\Delta$ be a fan in $N$ which is a subdivision of $\sigma$, and contains $\R_+e$ as a face.
Let $X'=T_N\emb(\Delta)$ be the induced toric variety. Then $\mu\colon X'\to X$ is a toric proper
birational morphism, and $e$ defines an invariant prime $E_e$ on $X'$.
Let $r\psi\in M$. Then $\chi^{r\psi}\omega_\cB^{\otimes r}$ trivializes $\omega^{[r]}_{(X/k,B)}$,
hence $\mu^*\chi^{r\psi}\omega_\cB^{\otimes r}$ trivializes $\omega^{[r]}_{(X'/k,B_{X'})}$.
Therefore $1-\mult_{E_e}(B_{X'})=\langle e,\psi \rangle$.
\end{proof}

We have $(X/k,B)_{-\infty}=\cup_{b_i>1}E_i$. If non-empty, the non-lc locus has pure codimension one in $X$.
If $B$ is effective, the non-lc locus is the support of a natural subscheme structure~\cite{Amb14},
with ideal sheaf $I_{-\infty}=\oplus_m k\cdot \chi^m$, where the sum runs after all $m\in M\cap \sigma$
such that $\langle m,e\rangle\ge \max(0,-\langle \psi,e\rangle)$ for all $e\in N\cap \sigma^\vee$.
From the existence of toric log resolutions, it follows that the lc centers of $(X/k,B)$ are the invariant 
subvarieties $X_\sigma$, where $\psi\in \sigma\prec \sigma_S$ and $\sigma\not\subset \tau_i$ if $b_i>1$.

Let $(X/k,B)$ be a toric log pair, with log canonical singularities. That is $\psi\in M_\Q\cap \sigma$.
The lc centers of $(X/k,B)$ are the invariant closed irreducible subvarieties
$X_\tau=\Spec k[M\cap \tau]$, where $\tau$ is a face of $\sigma$ which contains $\psi$. For $\tau=\sigma$,
we obtain $X$ as an lc center. For $\tau\ne \sigma$, we obtain lc centers defined by toric valuations of $X$.
Each lc center is normal. Any union of lc centers is weakly normal. The intersection of two lc centers is again
an lc center. With respect to inclusion, there exists a unique minimal lc center, namely $X_{\tau(\psi)}$
for $\tau(\psi)=\cap_{\psi\in \tau\prec \sigma}\tau$ (the unique face of $\sigma$ which contains $\psi$ in its relative
interior). Note that $X$ is the unique lc center of $(X/k,B)$ if and only
if $(X/k,B)$ has klt singularities, if and only if the coefficients of $B$ are strictly less than $1$, if and only if 
$\psi$ belongs to the relative interior of $\sigma$.
Define the {\em LCS locus}, or {\em non-klt locus} of $(X/k,B)$, to be the union of all lc centers of positive codimension in $X$.
We have $\max_i b_i\le 1$ and 
$
\LCS(X/k,B)=\cup_{b_i=1}E_i.
$
If non-empty, the $\LCS$ locus has pure codimension one in $X$.

Let $(X/k,B)$ be a toric log pair, let $E_i$ be an invariant prime divisor with $\mult_{E_i}(B)=1$.
Let $\psi$ be the log discrepancy function, let $r\psi\in M$. We have $E_i=\Spec k[M\cap \tau_i]$ 
for a codimension one face $\tau_i\prec \sigma$.
The condition $\mult_{E_i}(B)=1$ is equivalent to $r\psi\in M_i=M\cap \tau_i-M\cap \tau_i$.
Then $\chi^{r\psi}\omega_\cB^{\otimes r}$ trivializes $\omega^{[r]}_{(X/k,B)}$, and
$$
\Res_{E_i}(\chi^{r\psi}\omega_\cB^{\otimes r})=\chi^{r\psi}(\Res_{E_i} \omega_\cB )^{\otimes r}=
\epsilon_i^r \chi^{r\psi} \omega_{\cB_i}^{\otimes r}
$$
trivializes $\omega^{[r]}_{(E_i/k,B_{E_i})}$, where $B_{E_i}$ is the different of $(X/k,B)$ on $E_i$,
computed by the formula 
$$
(\chi^{r\psi})=r(\Sigma_{E_i}-B_{E_i})  \text{ on } E_i.
$$
Let $n\in \Z$. Then $\Res^{[n]}_{E_i}$ sends $\omega^{[n]}_{(X/k,B)}$ into $\omega^{[n]}_{(E_i/k,B_{E_i})}$.
If $\omega^{[n]}_{(X/k,B)}$ is invertible (even if $nB$ does not have integer coefficients), we obtain an isomorphism 
$$
\Res^{[n]}_{E_i} \colon \omega^{[n]}_{(X/k,B)}|_{E_i}\isoto \omega^{[n]}_{(E_i/k,B_{E_i})}.
$$
The coefficients of the different $B_{E_i}$ are controlled by those of $B$. Indeed, let $Q\subset E_i$ be 
an invariant prime divisor. The lattice dual to $M_i$ is a 
quotient lattice $N_i$ of $N$, and the cone in $N_i$ dual to $\tau_i\subset (M_i)_\R$ is the image of $\sigma^\vee\subset N_\R$ 
under the quotient $\pi\colon N \to N_i$. Let $e_Q\in N_i$ be the primitive vector on the extremal ray of the cone dual to $\tau_i$,
which determines $Q\subset E_i$. There exists an extremal ray of $\sigma^\vee$ which maps onto $\R_+e_Q$, and denote
by $e_j$ its primitive vector. Then $\pi(e_j)=q e_Q$ for some positive integer $q$. 
Since $\langle e_j,\psi\rangle= q \langle e_Q,\psi\rangle$, we obtain 
$$
\mult_Q(B_{E_i})=1-\frac{1-\mult_{E_j}(B)}{q}.
$$


\section{Serre's property $S_2$ for affine toric varieties}


Let $X=\Spec k[\cM]$ be the affine toric variety associated to a monoidal complex
$\cM=(M,\Delta,(S_\sigma)_{\sigma\in \Delta})$. The reader may consult~\cite[Section 2]{TFR1}
for basic definitions, notations and properties of such (possibly not normal or irreducible) toric varieties
$X$, including the criteria for $X$ to be seminormal or weakly normal.

The torus $T=\Spec k[M]$ acts naturally on $X$.
We give a combinatorial criterion for $X$ to satisfy Serre's property $S_2$.
Note that $X$ is irreducible if and only if $\Delta$ has a unique maximal element,
if and only if $X=\Spec k[S]$, where $S\subseteq M$ is a finitely generated semigroup such that 
$S-S=M$.


\subsection{Irreducible case}

Let $S\subseteq M$ be a finitely generated semigroup such that $S-S=M$. 
Let $k[S]$ be the induced semigroup ring, set $X=\Spec k[S]$. It is an equivariant embedding
of $T$. Let $\sigma_S\subseteq M_\R$ be the cone generated by $S$. For a face $\sigma\prec \sigma_S$,
denote $S_\sigma=S\cap \sigma$ and $X_\sigma=\Spec k[S_\sigma]$. Then $X$ is the toric variety
associated to the monoidal complex determined by $M$, the fan $\Delta$ consisting of faces of $\sigma_S$,
and the collection of semigroups $S_\sigma$. The invariant closed subvarieties of $X$ are $X_\sigma$.

If $S=M$, then $T=X$ is smooth, hence $S_2$. Else, $X\setminus T=\sum_i E_i$
is the sum of $T$-invariant codimension one subvarieties. We have $E_i=X_{\tau_i}$, where 
$(\tau_i )_i$ are the codimension one faces of $\sigma_S$. Set 
$$
S'=\cap_i (S-S\cap \tau_i).
$$

\begin{lem}
$S\subseteq S'\subseteq \bar{S}=M\cap \sigma_S$.
\end{lem}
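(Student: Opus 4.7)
The plan is to interpret the Minkowski-style difference $S - S \cap \tau_i$ as the subset $\{a - b : a \in S,\ b \in S \cap \tau_i\}$ of the ambient group $M$, and then verify each inclusion essentially by unwinding the definitions, using the description of $\sigma_S$ as the intersection of the half-spaces cut out by its codimension one faces.

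For the first inclusion $S \subseteq S'$, I would simply note that $0 \in S$ and $0 \in \tau_i$ for every $i$ (as $0$ lies in every face of $\sigma_S$), so $0 \in S \cap \tau_i$ for all $i$. Then for any $s \in S$, writing $s = s - 0$ exhibits $s$ as an element of $S - S \cap \tau_i$ for each $i$, hence $s \in S'$.

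For the second inclusion $S' \subseteq \bar S$, I would use the primitive normals $e_i \in N$ with $\sigma_S^\vee \cap \tau_i^\perp \cap N = \N e_i$, so that $\sigma_S = \{x \in M_\R : \langle e_i, x\rangle \ge 0 \text{ for all } i\}$. Given $m \in S'$, pick for each $i$ a decomposition $m = a_i - b_i$ with $a_i \in S$ and $b_i \in S \cap \tau_i$. Then $m \in M$ is clear; and since $a_i \in S \subseteq \sigma_S$ we have $\langle e_i, a_i\rangle \ge 0$, while $b_i \in \tau_i$ forces $\langle e_i, b_i\rangle = 0$, so $\langle e_i, m\rangle \ge 0$ for every $i$, yielding $m \in \sigma_S$ and hence $m \in M \cap \sigma_S = \bar S$.

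There is no real obstacle here — this is a direct unpacking of definitions, and the only mild subtlety is making sure that the facet presentation of $\sigma_S$ is recorded explicitly before doing the half-space check. The statement is essentially a warm-up observation preparing for the $S_2$ criterion in Theorem~\ref{2.10}.
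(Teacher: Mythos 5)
Your proof is correct and uses essentially the same argument as the paper: both inclusions rest on the facet presentation of the full-dimensional cone $\sigma_S$ and on the fact that elements of $S\cap\tau_i$ pair to zero against the facet normal $e_i$; the paper merely phrases the second inclusion as a proof by contradiction rather than a direct verification. The first inclusion is treated as immediate in the paper as well.
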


\begin{proof} We only have to prove the inclusion $S'\subseteq M\cap\sigma_S$. 
Suppose by contradiction that
$m\in S'\setminus \sigma_S$. Then there exists $\varphi\in \sigma_S^\vee$ such
that $\sigma_S\cap \varphi^\perp$ is a codimension one face $\tau_i$ of $\sigma_S$, and 
$\langle \varphi,m\rangle<0$. But $m+s_i\in S$ for some $s_i\in S\cap {\tau_i}$.
Therefore $\langle \varphi,m\rangle=\langle \varphi,m+s_i\rangle\ge 0$,
a contradiction.
\end{proof}

\begin{thm}\cite{GSW76}
The $S_2$-closure of $X$ is $\Spec k[S']\to \Spec k[S]$.
\end{thm}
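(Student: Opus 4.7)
The plan is to compute the $S_2$-closure
\[
R^{S_2} = \bigcap_{\fp} R_{\fp}
\]
(intersection taken in the fraction field $k(M)$ of $R = k[S]$, over all height-one primes $\fp$ of $R$), and to verify it equals $k[S']$. Because the torus $T$ acts on $R$ by ring automorphisms, it permutes the height-one primes of $R$, so the family $\{R_{\fp}\}$ is $T$-stable and $R^{S_2}$ inherits a $T$-action as a subring of $k(M)$. Any $T$-stable $k$-subspace of $k(M)$ decomposes into weight spaces $k\chi^m$, hence $R^{S_2} = \bigoplus_{m \in S''} k\chi^m$ for some $S \subseteq S'' \subseteq M$, and it suffices to prove $S'' = S'$.

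I next classify the height-one primes of $R$. The $T$-invariant ones are exactly $P_i = \bigoplus_{s \in S \setminus \tau_i} k\chi^s$ for each codimension-one face $\tau_i \prec \sigma_S$, since the quotient $R/P_i = k[S \cap \tau_i]$ is a domain of dimension $d-1$. Any other height-one prime $\fp$ does not contain any $P_i$, so its generic point lies in the open torus $T \subset X$; hence $R_{\fp}$ is a localization of $k[M]$, which automatically contains every $\chi^m$ with $m \in M$. Thus only the invariant primes $P_i$ impose conditions on $S''$, and the problem reduces to determining, for $m \in M$, when $\chi^m \in R_{P_i}$ for every $i$.

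It remains to prove the key equivalence: for $m \in M$, $\chi^m \in R_{P_i}$ if and only if $m \in S - S \cap \tau_i$. The direction $\Leftarrow$ is immediate: if $m = s_1 - s_2$ with $s_1 \in S$ and $s_2 \in S \cap \tau_i$, then $\chi^m = \chi^{s_1}/\chi^{s_2}$, with $\chi^{s_2} \notin P_i$. For $\Rightarrow$, write $\chi^m = f/g$ with $f \in R$ and $g \in R \setminus P_i$, and expand $f = \sum_\alpha a_\alpha \chi^{s_\alpha}$, $g = \sum_\beta b_\beta \chi^{t_\beta}$ with distinct exponents and non-zero coefficients. Since $g \notin P_i$, some $t_{\beta_0} \in S \cap \tau_i$. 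The equality $\chi^m g = f$ in $k[M]$ reads $\sum_\beta b_\beta \chi^{m+t_\beta} = \sum_\alpha a_\alpha \chi^{s_\alpha}$; matching the distinct monomials on each side gives, for $\beta = \beta_0$, an index $\alpha_0$ with $m + t_{\beta_0} = s_{\alpha_0} \in S$, so $m = s_{\alpha_0} - t_{\beta_0} \in S - S \cap \tau_i$. Intersecting over $i$ yields $S'' = \bigcap_i (S - S \cap \tau_i) = S'$, whence $R^{S_2} = k[S']$. The technical core is this final monomial-matching step, which ultimately relies on the linear independence of distinct characters of $T$.
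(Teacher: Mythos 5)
Your overall strategy coincides with the paper's: identify the $S_2$-closure with the ring of functions regular in codimension one, use the torus action to reduce to monomials, observe that only the invariant height-one primes $P_i$ impose conditions, and characterize when $\chi^m$ is regular at the generic point of $E_i$ by $m\in S-S\cap\tau_i$. Your classification of the invariant height-one primes and your monomial-matching proof of the key equivalence $\chi^m\in R_{P_i}\Leftrightarrow m\in S-S\cap\tau_i$ are correct, and are in fact more detailed than the paper's one-line assertion of the same facts.

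There is, however, one step that fails as stated: the claim that \emph{any} $T$-stable $k$-subspace of $k(M)$ decomposes into weight spaces $k\chi^m$. This is false: $k(M)$ itself is $T$-stable but is not spanned by monomials, and more pointedly the $T$-span of $(1+\chi^m)^{-1}$ is a nonzero stable subspace containing no nonzero monomial. Weight decompositions are automatic only for rational representations, and the $T$-action on the fraction field is not one. The gap is easy to close, and the repair is exactly the opening move of the paper's proof: the height-one primes whose generic point lies in the torus do more than ``impose no conditions'' --- they force $R^{S_2}=\bigcap_{\fp}R_{\fp}\subseteq\bigcap_{\fq}k[M]_{\fq}=k[M]$, where $\fq$ runs over the height-one primes of $k[M]$ and the last equality holds because $k[M]$ is normal (the paper phrases this as ``$f|_T$ is regular in codimension one on $T$, and $T$ is normal''). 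A $T$-stable subspace of $k[M]$ \emph{is} spanned by the monomials it contains, since $k[M]$ is a rational $T$-module with distinct one-dimensional weight spaces. With that inclusion in place, the rest of your argument goes through verbatim: you should use the torus primes twice, once to bound $R^{S_2}$ inside $k[M]$ (which legitimizes the weight decomposition), and once to note that they exclude no monomial.
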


\begin{proof} Denote $R=\{f\in k(X); \text{regular in codimension one on }X\}$.
If $f\in R$, then $f|_T$ is regular in codimension one on $T$. Since $T$ is normal, 
$f$ is regular on $T$. Therefore $R\subseteq \cO(T)=\oplus_{m\in M} k\cdot \chi^m$.
Now $R$ is $T$-invariant. Therefore $R=\oplus_{m\in S_1} k\cdot \chi^m$, for a 
certain semigroup $S_1\subseteq M$ which remains to be identified.

Let $m\in S_1$. Let $\tau_i\prec \sigma_S$ be a face of codimension one. Then 
$\chi^m$ is regular at the generic point of $E_i$. That is $m=s-s'$,
for some $s\in S$ and $s'\in S\cap \tau_i$. We deduce that 
$S_1\subseteq \cap_i (S-S\cap \tau_i)=S'$. For the converse, let $m\in S'$.
Since $\chi^m$ is regular on $T$ and at the generic points of $X\setminus T=\sum_i E_i$, it is regular 
in codimension one on $X$. Therefore $m\in S_1$.
\end{proof}

In particular, $\Spec k[S]$ is $S_2$ if and only if $S=\cap_i (S-S\cap \tau_i)$.

Recall~\cite[Proposition 2.10]{TFR1} that $X=\Spec k[S]$ is seminormal if 
and only if $S=\sqcup_{\sigma \prec \sigma_S} \Lambda_\sigma\cap \relint \sigma$,
where $(\Lambda_\sigma)_{\sigma\prec \sigma_S}$ is a family of sublattices of
finite index $\Lambda_\sigma\subseteq M\cap \sigma-M\cap \sigma$, such that
$\Lambda_{\sigma_S}=M$ and $\Lambda_{\sigma'}\subseteq \Lambda_{\sigma}\cap \sigma'$
if $\sigma'\prec \sigma$. A family of sublattices defines $S$ by the above
formula, and $S$ determines the family of sublattices $\Lambda_\sigma=S\cap \sigma-S\cap \sigma$.

\begin{thm} \cite{BLR06}
$\Spec k[S]$ is seminormal and $S_2$ if and only if 
$\Lambda_\sigma=\cap_{\tau_i\supset\sigma}\Lambda_{\tau_i}$,
for every proper face $\sigma \precnsim \sigma_S$.
\end{thm}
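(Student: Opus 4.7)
The plan is to combine the seminormality decomposition $S=\sqcup_\sigma \Lambda_\sigma\cap\relint\sigma$ with the $S_2$ criterion $S=\bigcap_i(S-S\cap\tau_i)$ from the preceding theorem. Two elementary cone facts about $\sigma_S$ will be used throughout: for every proper face $\sigma\precnsim\sigma_S$ one has $\sigma=\bigcap_{\tau_i\supset\sigma}\tau_i$ (hence also $\sigma-\sigma=\bigcap_{\tau_i\supset\sigma}(\tau_i-\tau_i)$), since quotienting by the lineality of $\sigma_S$ reduces this to the statement that every proper face of a pointed cone is the intersection of the facets containing it; and for every facet $\tau_j$ with $\tau_j\not\supset\sigma$, the join satisfies $\sigma\vee\tau_j=\sigma_S$, because $\tau_j\subsetneq\sigma\vee\tau_j$ and $\tau_j$ has codimension one.

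For the direction $(\Leftarrow)$, I would take $m\in\bigcap_i(S-S\cap\tau_i)$ and show $m\in S$. By the preceding theorem $m\in M\cap\sigma_S$, so there is a unique face $\sigma$ of $\sigma_S$ with $m\in\relint\sigma$; if $\sigma=\sigma_S$ then $m\in M\cap\relint\sigma_S\subseteq S$ directly. Otherwise, for each facet $\tau_j\supset\sigma$ I write $m+s_j=t_j$ with $s_j\in S\cap\tau_j$ and $t_j\in S$, and note that $m\in\sigma\subseteq\tau_j$ forces $t_j\in S\cap\tau_j$ as well. Since $S\cap\tau_j$ decomposes under seminormality as $\sqcup_{\sigma'\prec\tau_j}\Lambda_{\sigma'}\cap\relint\sigma'$ with $\Lambda_{\sigma'}\subseteq\Lambda_{\tau_j}$, both $s_j$ and $t_j$ lie in $\Lambda_{\tau_j}$, so $m=t_j-s_j\in\Lambda_{\tau_j}$. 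Running $\tau_j$ over all facets containing $\sigma$ and invoking the hypothesis yields $m\in\Lambda_\sigma$, hence $m\in\Lambda_\sigma\cap\relint\sigma\subseteq S$.

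For the direction $(\Rightarrow)$, I fix a proper face $\sigma\precnsim\sigma_S$ and take $m\in\bigcap_{\tau_i\supset\sigma}\Lambda_{\tau_i}$; the cone fact gives $m\in M\cap(\sigma-\sigma)$. The plan is to choose $s_0\in\Lambda_\sigma\cap\relint\sigma$ (non-empty because $\Lambda_\sigma$ has full rank in $M\cap(\sigma-\sigma)$) and set $s=Ns_0$ for $N$ large enough that $m+s\in\relint\sigma$, then verify $m+s\in S$; this will put $m+s\in\Lambda_\sigma\cap\relint\sigma$, so that $m=(m+s)-s\in\Lambda_\sigma$. To get $m+s\in S$ via the $S_2$ criterion, I only need $m+s\in S-S\cap\tau_j$ for every facet $\tau_j$. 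When $\tau_j\supset\sigma$, I pick $s'\in\Lambda_{\tau_j}\cap\relint\tau_j$; using the hypothesis $m\in\Lambda_{\tau_j}$ together with $s\in\Lambda_\sigma\subseteq\Lambda_{\tau_j}$, the element $m+s+s'$ sits in $\Lambda_{\tau_j}\cap\relint\tau_j\subseteq S$. When $\tau_j\not\supset\sigma$, I again pick $s'\in\Lambda_{\tau_j}\cap\relint\tau_j$; since $\sigma\vee\tau_j=\sigma_S$ and since adding a point of $\relint\sigma$ to one of $\relint\tau_j$ lands in $\relint(\sigma\vee\tau_j)$, the sum $m+s+s'$ lies in $\relint\sigma_S\cap M=\Lambda_{\sigma_S}\cap\relint\sigma_S\subseteq S$.

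The main obstacle is the second subcase of $(\Rightarrow)$: the hypothesis on $m$ provides no lattice information at facets $\tau_j\not\supset\sigma$, so the $S_2$ condition at such $\tau_j$ must be forced geometrically, by exploiting the join identity $\sigma\vee\tau_j=\sigma_S$ to push $m+s+s'$ into the open stratum where $\Lambda_{\sigma_S}=M$ removes every lattice obstruction.
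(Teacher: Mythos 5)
Your argument is correct and follows essentially the same route as the paper's proof: both directions rest on the same two ingredients, the $S_2$-closure description $S'=\bigcap_i(S-S\cap\tau_i)$ and the seminormality decomposition $S=\bigsqcup_{\sigma}\Lambda_\sigma\cap\relint\sigma$, with the same case split on facets containing or not containing $\sigma$. The only cosmetic differences are that you argue the first direction directly rather than by contraposition, and in the second direction you translate $m$ into $\relint\sigma$ by adding $Ns_0$ (and spell out, via the join identity $\sigma\vee\tau_j=\sigma_S$, the existence of $s'\in S\cap\tau_j$ pushing the sum into $\Int\sigma_S$, which the paper asserts without proof) where the paper instead cites Lemma 2.9 of \cite{TFR1} to reduce to relative-interior points.
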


\begin{proof} Recall that $(\tau_i)_i$ are the codimension one faces of $\sigma_S$.
For the proof, we may suppose $\Spec k[S]$ is seminormal.

Suppose $\Spec k[S]$ is not $S_2$. There exists $m\notin S$ and 
$s_i\in S_{\tau_i}\ (1\le i\le q)$ such that $m+s_i\in S$ for all $i$. It follows that $m\in \sigma_S$.
Let $\sigma \prec \sigma_S$ be the unique face which contains $m$ in its relative interior.
Let $\tau_i$ be a codimension one face which contains $\sigma$. Then $m+s_i\in S_{\tau_i}$.
Therefore $m\in S_{\tau_i}-S_{\tau_i}=\Lambda_{\tau_i}$. We obtain 
$m\in \cap_{\tau_i \supset \sigma}\Lambda_{\tau_i}\setminus \Lambda_\sigma$.
Therefore $\Lambda_\sigma$ is strictly contained in $\cap_{\tau_i\supset \sigma}\Lambda_{\tau_i}$.

Conversely, suppose $\Spec k[S]$ is $S_2$. Let $\sigma \precnsim \sigma_S$ be a proper face.
We have an inclusion of lattices $\Lambda_\sigma\subseteq \cap_{\tau_i\supset \sigma}\Lambda_{\tau_i}$,
both generating $\sigma-\sigma$. The inclusion of lattices is an equality, if it is so after restriction to 
$\relint \sigma$, by~\cite[Lemma 2.9]{TFR1}.

Let $m\in \cap_{\tau_i\supset \sigma}\Lambda_{\tau_i}\cap \relint \sigma$.  
If $\tau_i \supseteq \sigma$, then $m\in \Lambda_{\tau_i}\subset S-S_{\tau_i}$. 
If $\tau_i \not\supseteq \sigma$, there exists $s_i \in S\cap \tau_i$ such that $m+s_i \in \Int\sigma_S$. 
Therefore $m+s_i \in M\cap \Int \sigma_S$, which is contained in $S$ by seminormality. 
We obtain $m\in S'$. The $S_2$ property implies that $m\in S$. 
Therefore $m\in \Lambda_\sigma$. We obtain $\Lambda_\sigma=\cap_{\tau_i\supset \sigma}\Lambda_{\tau_i}$.
\end{proof}

So to give $X=\Spec k[S]$ which is seminormal and $S_2$, is equivalent to give $(M,\sigma_S)$
(i.e. the normalization), the codimension one faces $(\tau_i)_i$ of $\sigma_S$, and finite
index sublattices $\Lambda_i\subseteq M\cap \tau_i-M\cap \tau_i$, for each $i$. 
Moreover, $X$ is weakly normal if and only if $\Char(k)$ does not divide the index of 
the sublattice $\Lambda_i \subseteq M\cap \tau_i-M\cap \tau_i$ for all $i$, if and only if 
$\Char(k)$ does not divide the incidence numbers $d_{Y\subset X}$ for every invariant
subvariety $Y$ of $X$ (with the terminology of Definition~\ref{inde}).

The normalization of $X$ is $\bar{X}=\Spec k[\bar{S}]\to X=\Spec k[S]$.
If $X$ is seminormal, the conductor subschemes $C\subset X$ and $\bar{C}\subset \bar{X}$ 
are reduced, described as follows.

\begin{lem} Suppose $X=\Spec k[S]$ is seminormal. Let $\Delta$ be the fan consisting 
of the cones $\sigma\prec \sigma_S$ such that $S_\sigma-S_\sigma\subsetneq 
M\cap \sigma-M\cap \sigma$.
Then $C=\cup_{\sigma\in \Delta}X_\sigma$ and $\bar{C}=\cup_{\sigma\in \Delta}\bar{X}_\sigma$.
\end{lem}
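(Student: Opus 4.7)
The plan is to compute the conductor $\mathfrak{c}=\{r\in\bar R : r\bar R\subseteq R\}$ as a $T$-graded ideal, where $R=k[S]$ and $\bar R=k[\bar S]$, and identify it with the intersection of the primes defining the invariant subvarieties $\bar X_\sigma$ for $\sigma\in\Delta$. Since $\mathfrak{c}$ is stable under the torus action, it decomposes as $\mathfrak{c}=\bigoplus_{m\in C^*}k\cdot\chi^m$ where $C^*=\{m\in S : m+\bar S\subseteq S\}$. My goal is to prove
\[
C^*=\bar S\setminus\bigcup_{\sigma\in\Delta}(M\cap\sigma)=S\setminus\bigcup_{\sigma\in\Delta}(S\cap\sigma),
\]
from which $\mathfrak{c}=\bigcap_{\sigma\in\Delta}\mathfrak{p}_\sigma$ in $\bar R$ (and $\mathfrak{c}=\bigcap_{\sigma\in\Delta}(\mathfrak{p}_\sigma\cap R)$ in $R$), where $\mathfrak{p}_\sigma$ is the $T$-invariant prime defining $\bar X_\sigma$ (respectively $X_\sigma$). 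Since the conductor is then an intersection of prime ideals, both $\bar C\subset\bar X$ and $C\subset X$ are reduced and equal the claimed unions.

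The key combinatorial sublemma is: if $m\in\relint\tau$ and $m'\in\relint\tau'$ for faces $\tau,\tau'\prec\sigma_S$, then $m+m'$ lies in the relative interior of the smallest face $\tau\vee\tau'$ of $\sigma_S$ containing $\tau$ and $\tau'$. Indeed, if $\varphi\in\sigma_S^\vee$ satisfies $\varphi(m+m')=0$, nonnegativity forces $\varphi(m)=\varphi(m')=0$, so $\varphi$ vanishes on $\tau$ and $\tau'$, hence on $\tau\vee\tau'$. For the forward inclusion, assume $m\in S\cap\relint\tau$ and every face $\tau''\succeq\tau$ avoids $\Delta$. Given $m'\in\bar S\cap\relint\tau'$, the face $\tau\vee\tau'$ lies above $\tau$, so is not in $\Delta$, and hence $\Lambda_{\tau\vee\tau'}=M\cap(\tau\vee\tau')-M\cap(\tau\vee\tau')$. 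Using the compatibility $\Lambda_\tau\subseteq\Lambda_{\tau\vee\tau'}$ from the seminormal structure, we get $m+m'\in\Lambda_{\tau\vee\tau'}\cap\relint(\tau\vee\tau')\subseteq S$, so $m\in C^*$.

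For the reverse inclusion, suppose some face $\tau'\succeq\tau(m)$ lies in $\Delta$, where $\tau(m)$ denotes the face containing $m$ in its relative interior. Pick $v\in (M\cap\tau'-M\cap\tau')\setminus\Lambda_{\tau'}$ and $m_0\in\Lambda_{\tau'}\cap\relint\tau'$; for $N$ sufficiently large, $m':=v+Nm_0-m$ lies in $\relint\tau'\subseteq\sigma_S$, hence in $\bar S$, while $m+m'=v+Nm_0$ lies in $\relint\tau'$ but not in $\Lambda_{\tau'}$, so by seminormality $m+m'\notin S$; thus $m\notin C^*$. The main obstacle is the combinatorial identification of $C^*$: the sublemma about sums of relative-interior points is quick but must be set up carefully, and constructing the witness $m'$ in the reverse direction relies on both seminormality and the compatibility relations $\Lambda_{\sigma'}\subseteq\Lambda_\sigma\cap\sigma'$ for $\sigma'\prec\sigma$. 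Once $C^*$ is identified, reducedness of $\bar C$ and $C$ is immediate from the expression of $\mathfrak{c}$ as an intersection of prime ideals.
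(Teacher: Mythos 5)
Your proof is correct and follows essentially the same route as the paper: both compute the conductor as a $T$-graded ideal and identify its monomial support face by face using the seminormality identity $S\cap \relint\sigma=\Lambda_\sigma\cap\relint\sigma$. The only cosmetic difference is that you argue the two inclusions in contrapositive form, with an explicit witness $m'=v+Nm_0-m$ where the paper instead deduces $\bar{S}_\sigma-\bar{S}_\sigma=S_\sigma-S_\sigma$ directly from $m+\bar{S}_\sigma\subseteq S_\sigma$.
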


\begin{proof} Note that $\bar{S}_\sigma=M\cap \sigma$ for $\sigma\prec \sigma_S$.

If $S_\sigma-S_\sigma\subsetneq M\cap \sigma-M\cap \sigma$, 
the same property holds for all faces $\tau\prec \sigma$. Therefore $\Delta$ is a fan. 
The conductor ideal is $I=\oplus_{m+\bar{S}\subseteq S} k\cdot \chi^m$. We claim
$$
\{m\in S;  m+\bar{S}\subseteq S\}= S\setminus \cup_{\sigma\in \Delta} \sigma.
$$

For the inclusion $\subseteq$, let $m\in \sigma\prec \sigma_S$
with $m+\bar{S}\subset S$. Then $m+\bar{S}_\sigma \subseteq S_\sigma$. Since
$m\in S_\sigma$, we obtain $S_\sigma-S_\sigma=\bar{S}_\sigma-\bar{S}_\sigma$. 
Therefore $\sigma\notin \Delta$. 

For the inclusion $\supseteq$, let $m\in S$ with $m+\bar{S}\nsubseteq S$.
There exists $\bar{s}\in \bar{S}$ such that $m+\bar{s}\notin S$. Let $\sigma\prec \sigma_S$
be the unique face with $m+\bar{s}\in \relint \sigma$. Then $m,\bar{s}\in \sigma$.
Suppose by contradiction that $\sigma\notin \Delta$. Then $S_\sigma-S_\sigma=
\bar{S}_\sigma-\bar{S}_\sigma$, and 
$$
m+\bar{s}\in \bar{S}_\sigma\cap \relint \sigma=(\bar{S}_\sigma-\bar{S}_\sigma)\cap \relint \sigma=
(S_\sigma-S_\sigma)\cap \relint \sigma=S_\sigma\cap \relint \sigma,
$$
where we have used that $\bar{X}$ and $X$ are seminormal. Then $m+\bar{s}\in S$, a contradiction.
Therefore $\sigma\in \Delta$.
\end{proof}

\begin{defn}\label{inde} Let $X=\Spec k[S]$ and $Y\subset X$ an invariant closed irreducible subvariety.
That is $Y=X_\tau$ for some face $\tau\prec \sigma_S$. Let $\pi\colon \bar{X}\to X$
be the normalization, let $\bar{Y}=\pi^{-1}(Y)$. Then 
$\bar{X}=\Spec k[M\cap \sigma_S]$, $\bar{Y}=(\bar{X})_\tau=\Spec k[M\cap \tau]$ and we obtain
a cartesian diagram
\[ 
\xymatrix{
  \bar{X} \ar[d]_\pi & \bar{Y} \ar@{_{(}->}[l]  \ar[d]^{\pi'}    \\
    X                        &  Y  \ar@{_{(}->}[l]  
} \]
The induced morphism $\pi'\colon \bar{Y}\to Y$ is finite, of degree $d_{Y\subset X}$,
equal to the index of the sublattice $S_\tau-S_\tau\subseteq M\cap \tau-M\cap \tau$.
We call $d_{Y\subset X}$ the {\em incidence number of $Y\subset X$}, sometimes
denoted $d_{\tau\prec \sigma_S}$. Note that $d_{Y\subset X}>1$ if  and only if $X$ is not
normal at the generic point of $Y$.
\end{defn}


\subsection{Reducible case}

Consider now the general case of an affine toric variety $X=\Spec k[\cM]$.
For $\sigma\in \Delta$, denote by $X_\sigma$ the $T$-invariant 
closed irreducible subvariety of $X$ coresponding to $\sigma$. The decomposition of $X$
into irreducible components is $X=\cup_F X_F$, where $\{F\}$ are the facets (maximal 
cones) of $\Delta$. 

\begin{lem}\label{rpr}
The sequence 
$
0\to \cO_X\to \oplus_F \cO_{X_F} \to \oplus_{F\ne F'}\cO_{X_{F\cap F'}}
$
is exact.
\end{lem}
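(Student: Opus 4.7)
The plan is to exploit the torus action to reduce the claimed exactness to a trivial combinatorial check, degree by degree in $M$. Since the schemes $X$, $X_F$ and $X_{F\cap F'}$ are all affine and $T$-stable, the sheaf sequence is exact if and only if the underlying sequence of rings
$$
0 \to k[\cM] \to \bigoplus_F k[S_F] \to \bigoplus_{F \ne F'} k[S_{F \cap F'}]
$$
is exact. Every term is $M$-graded and every arrow is $M$-homogeneous, so it suffices to check exactness on each $M$-homogeneous component separately.

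Fix $m \in M$. By the definition of a toric face ring recalled in~\cite[Section 2]{TFR1}, one has $k[\cM]_m = k \cdot \chi^m$ when $m \in S_\sigma$ for some $\sigma \in \Delta$ and $k[\cM]_m = 0$ otherwise, and likewise for each $k[S_\sigma]$. If $m$ lies in no $S_\sigma$, every term of the sequence vanishes in degree $m$ and there is nothing to verify. Otherwise let $\sigma(m)$ be the unique cone of $\Delta$ containing $m$ in its relative interior; the compatibility axiom $S_\tau = S_\sigma \cap \tau$ for $\tau \prec \sigma$ implies that $m \in S_{\sigma(m)}$ and consequently that $\{\sigma \in \Delta : m \in S_\sigma\} = \Star(\sigma(m))$, the set of cones of $\Delta$ having $\sigma(m)$ as a face.

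With these identifications the degree-$m$ slice becomes the combinatorial sequence
$$
0 \to k \xrightarrow{\alpha} \bigoplus_{F \succeq \sigma(m)} k \xrightarrow{\beta} \bigoplus_{F \ne F',\ F \cap F' \succeq \sigma(m)} k,
$$
where the index sets run over facets of $\Delta$, $\alpha$ is the diagonal embedding and $\beta$ sends $(a_F)$ to $(a_F - a_{F'})$. Injectivity of $\alpha$ is immediate since at least one facet contains $\sigma(m)$; if exactly one does the right-hand term is empty and there is nothing more to check, while otherwise $\ker(\beta)$ consists of tuples with all entries equal, which coincides with $\im(\alpha)$. The main substantive step is therefore the graded-piece description of $k[\cM]$ together with the incidence identification $\{\sigma : m \in S_\sigma\} = \Star(\sigma(m))$; both are built into the axioms of a monoidal complex, and once granted the rest of the argument is a one-line diagram chase.
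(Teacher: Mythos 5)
Your proof is correct and is essentially the paper's argument: the paper also works monomial by monomial, observing that for each $m$ the coefficient $c_m^F$ is independent of the facet $F$ containing $m$ and assembling the resulting $f\in k[\cM]$, which is exactly your degree-$m$ slice computation made explicit via the identification $\{\sigma : m\in S_\sigma\}=\Star(\sigma(m))$. The only difference is presentational — you spell out the $M$-grading and the reduction to the diagonal/difference sequence of copies of $k$, which the paper leaves implicit.
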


\begin{proof} Let $f_F\in \cO(X_F)$ such that for every $F\ne F'$, $f_F$ and $f_{F'}$
coincide on $X_{F\cap F'}$. We can write $f_F=\sum_m c_m^F\chi^m$.
Let $m\in |\cM |$. The map $F\ni m\mapsto c_m^F$ is constant. Denote by $c_m$
this common value. Then $f=\sum_m c_m\chi^m\in \cO(X)$ and $f|_{X_F}=f_F$ for 
every facet $F$. This shows that the sequence is exact in the middle. The map 
$\cO(X)\to \oplus_F\cO(X_F)$ is clearly injective.
\end{proof}

The $S_2$-closure of $X$ is $\Spec R\to X$, where $R=\varinjlim_{\codim(Z\subset X)\ge 2}\cO_X(X\setminus Z)$
is the ring of functions which are regular in codimension one points of $X$. We describe $R$
explicitly. For $\sigma\in \Delta$, recall that $O_\sigma\subset X_\sigma$ is the open dense orbit.
We have $\sqcup_F O_F\subset X$, with complement $\Sigma=\cup_{\codim(\sigma\in \Delta)>0}X_\sigma$,
the toric boundary of $X$.

Let $f\in R$. Then $f_F:=f|_{O_F}$ is regular in codimension one. Since $T_F$ is normal, hence $S_2$,
$f_F\in \cO(O_F)$. We can uniquely write $f_F=\sum_{m\in S_F-S_F} c_m^F\chi^m$, where the sum has finite
support. Denote $\Supp(f_F)=\{m\in S_F-S_F; c_m^F\ne 0 \}$.

Let $\sigma\in \Delta$ be a cone of codimension one. Equivalently, $\sigma$ has codimension 
one in every facet containing it. Since $f$ is regular at the generic point of $X_\sigma$, we obtain:
\begin{itemize}
\item[1)] $f_F$ is regular at the generic point of $X_\sigma\hookrightarrow X_F$. That is
$\Supp f_F\subset S_F-S_\sigma$.
\item[2)] If $F$ and $F'$ are two facets containing $\sigma$, the restriction of $f_F$ to $X_\sigma\hookrightarrow X_F$
coincides with the restriction of $f_{F'}$ to $X_\sigma\hookrightarrow X_{F'}$.
\end{itemize}
So $f\in R$ induces a family $(f_F)_F\in \prod_F \cO(O_F)$ satisfying properties 1) and 2). This correspondence is bijective,
by Lemma~\ref{rpr}. Thus we may identify $R$ with the collections $(f_F)_F\in \prod_F \cO(O_F)$ satisfying properties 
1) and 2), for every cone $\sigma\in \Delta$ of codimension one. 

\begin{defn}
The fan $\Delta$ is called {\em $1$-connected} if for every two facets $F\ne F'$ of $\Delta$, 
there exists a sequence of facets $F_0=F,F_1,\ldots,F_n=F'$ of $\Delta$, which contain $F\cap F'$, 
and such that $F_i\cap F_{i+1}$ is a face of codimension one in both $F_i$ and $F_{i+1}$, for all $0\le i<n$.
\end{defn}

It is clear that for a $1$-connected fan, every facet has the same dimension.

\begin{lem}\label{1i}
If $X$ is $S_2$, then $\Delta$ is $1$-connected.
\end{lem}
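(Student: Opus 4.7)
The plan is to argue by contradiction using the explicit description of the $S_2$-closure $R$ developed just above: an element of $R$ is a family $(f_F)_F\in\prod_F\cO(O_F)$ satisfying conditions 1) and 2) at every codimension-one cone of $\Delta$, while by Lemma~\ref{rpr} an element of $\cO(X)\subset R$ is the same data subject to the stronger requirement of agreement on every pairwise intersection $X_{F\cap F'}$.

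First I would show that every facet of $\Delta$ has the same dimension. Set $d=\max_F\dim F$ and define $(g_F)_F$ by $g_F=1$ if $\dim F=d$ and $g_F=0$ otherwise. Constants trivially satisfy condition 1), and by the very definition of a codimension-one cone $\tau\in\Delta$ every facet $G\supseteq\tau$ has $\dim G=\dim\tau+1$, so all such $g_G$ agree and condition 2) holds; hence $(g_F)\in R$. Any two facets meet at least in the zero cone and $X_{\{0\}}\ne\emptyset$, so by Lemma~\ref{rpr} the family belongs to $\cO(X)$ only when all $g_F$ coincide in $k$. If some facet had dimension $<d$, this would fail and $S_2$ would be contradicted.

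Next, assuming equidimensionality, I would prove the 1-connectedness property by induction on $c=d-\dim\sigma$, where $\sigma=F\cap F'$. When $c=0$ the chain is trivial, and when $c=1$ the cone $\sigma$ is codimension one in both $F$ and $F'$, so $F,F'$ itself is a valid chain. For $c\ge 2$, localise at the generic point of $X_\sigma$ and set $A=\cO_{X,X_\sigma}$. Then $A$ inherits $S_2$ with $\dim A=c\ge 2$, hence $\depth A\ge 2$, and the standard argument (global sections on the punctured spectrum equal $A$, which is local and so has only trivial idempotents) shows $\Spec A\setminus\{\fm\}$ is connected. The irreducible components of $\Spec A$ correspond to facets $G\supseteq\sigma$, and two of them meet off the closed point exactly when $G\cap G'\supsetneq\sigma$; connectedness therefore produces a chain $F=F_0,F_1,\dots,F_n=F'$ of facets containing $\sigma$ with $\sigma_i:=F_i\cap F_{i+1}\supsetneq\sigma$. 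Since $d-\dim\sigma_i<c$, the inductive hypothesis applied to each pair $(F_i,F_{i+1})$ supplies a codimension-one chain of facets containing $\sigma_i\supseteq\sigma$; concatenating these sub-chains completes the proof.

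The main obstacle I anticipate is the inductive step: one needs the commutative-algebra fact that $\depth A\ge 2$ forces $\Spec A\setminus\{\fm\}$ to be connected, and must carefully translate this topological connectedness into the combinatorial statement about chains of facets through $\sigma$ with strictly larger consecutive intersections. Equidimensionality established in the first step is essential here, since it is what makes $\dim A=c$ and so enables $S_2$ to provide the required depth bound.
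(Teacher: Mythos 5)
Your argument is correct; it rests on the same principle as the paper's proof ($S_2$ forbids new connected components from appearing after a codimension-$\ge 2$ locus is removed) but is organized quite differently. The paper works globally and in one step: it forms the graph $\Gamma$ whose vertices are the facets containing $F\cap F'$ and whose edges are the pairs meeting in codimension one in both, observes that the locus $Z$ where facets from distinct connected components of $\Gamma$ meet has codimension at least two, and uses $\cO_X(U)\isoto\cO_X(U\setminus Z)$ for the connected open set $U=X\setminus Y$ to conclude that $U\setminus Z=\sqcup_c(X^c\setminus Y)$, hence $\Gamma$, is connected --- the resulting path is exactly the required chain, and equidimensionality never has to be addressed separately. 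You instead first force equidimensionality by exhibiting a locally constant family in $R$ that Lemma~\ref{rpr} excludes from $\cO(X)$ unless all facets have the same dimension, and then induct on $\codim(F\cap F')$, using at each stage only the local fact that $\depth \cO_{X,X_\sigma}\ge 2$ makes the punctured spectrum connected; this produces a chain whose consecutive intersections merely contain $\sigma$ strictly, which you then refine recursively. Your route is longer but each step is elementary and local, and the details check out: the family $(g_F)$ does satisfy conditions 1) and 2) because all facets through a codimension-one cone have equal dimension; $X_{F\cap F'}$ is a nonempty variety on which the two constants must agree (this is what matters, even when $F\cap F'$ is larger than the zero cone); and equidimensionality gives $\dim\cO_{X,X_\sigma}=c\ge 2$ in the inductive step, which is exactly what lets $S_2$ deliver the depth bound you need.
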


\begin{proof} Let $F\ne F'$ be two facets of $\Delta$. Define a graph $\Gamma$ as follows:
the vertices are the facets of $\Delta$ which contain $F\cap F'$, and two vertices are joined
by an edge if their intersection has codimension one in both of them. Let $\{c\}$ be the 
connected components of $\Gamma$. Denote by $X^{c}$ the union of the irreducible components 
of $X$ which belong to $c$, and $Z=\cup_{c\ne c'} X^c\cap X^{c'}$. By construction,
$\codim(Z\subset X)\ge 2$. Let $Y$ be the union of the irreducible components of $X$ which do not
contain $X_{F\cap F'}$, set $U=X\setminus Y$. 

If $X$ is $S_2$, then $\cO_X(U)\to \cO_X(U\setminus Z)$ 
is an isomorphism. Since $U$ is connected, it follows that $U\setminus Z=\sqcup_c (X^c\setminus Y)$ 
is connected, that is $\Gamma$ is connected. Therefore $F$ and $F'$ can be joined by a chain with the desired properties.
\end{proof}

\begin{lem}\label{2i} 
Suppose $\Delta$ is $1$-connected. Denote by $S'_F$ the $S_2$-closure of $S_F$. 
For $\sigma\in \Delta$, define $\tilde{S}_\sigma=\{m\in \sigma;m\in S'_F\ \forall F\ni m\}$.
Then $\tilde{\cM}=(M,\Delta,(\tilde{S}_\sigma)_{\sigma\in \Delta})$ is a monoidal complex,
and $\Spec k[\tilde{\cM}] \to \Spec k[\cM]$ is the $S_2$-closure of $X$.
\end{lem}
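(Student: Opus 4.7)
The plan is to first verify that $\tilde{\cM}$ is a monoidal complex and that $S_\sigma\subseteq \tilde{S}_\sigma$ (yielding a natural toric morphism $\pi\colon \Spec k[\tilde{\cM}]\to \Spec k[\cM]$), and then to identify the coordinate ring $k[\tilde{\cM}]$ with the ring $R$ of functions regular in codimension one, using the concrete description of $R$ obtained in the discussion preceding the lemma. This will directly realize $\pi$ as the $S_2$-closure.

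For the semigroup property of $\tilde{S}_\sigma$: given $m_1,m_2\in \tilde{S}_\sigma$ and a facet $F$ containing $m_1+m_2$, the intersection $F\cap \sigma$ is a face of $\sigma$, cut out by a linear functional nonnegative on $\sigma$; its vanishing on the sum forces vanishing on each summand, so $m_1,m_2\in F\cap \sigma\subseteq F$, and then $m_1,m_2\in S'_F$ forces $m_1+m_2\in S'_F$. Face compatibility $\tilde{S}_\tau=\tilde{S}_\sigma\cap \tau$ for $\tau\prec \sigma$ is immediate from the definition, and finite generation follows from $\tilde{S}_F\subseteq S'_F$. The inclusion $S_\sigma\subseteq \tilde{S}_\sigma$ holds because for any facet $F\ni m$ with $m\in S_\sigma$, the face $\sigma\cap F\prec \sigma$ contains $m$, giving $m\in S_{\sigma\cap F}=S_F\cap(\sigma\cap F)\subseteq S_F\subseteq S'_F$.

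For the identification $k[\tilde{\cM}]=R$, the inclusion $k[\tilde{\cM}]\subseteq R$ is immediate: a monomial $\chi^m$ with $m\in \tilde{S}_\sigma$ restricts to $O_F$ with support in $\tilde{S}_F\subseteq S'_F$, so condition 1) holds, and 2) is automatic since both restrictions come from a single ambient function. The converse is the main content. Take $f=(f_F)_F\in R$, write $f_F=\sum_m c^F_m \chi^m$, and consider $m$ with $c^F_m\ne 0$. Let $\sigma(m)\in \Delta$ be the smallest cone containing $m$, a face of $F$. I claim $m\in \tilde{S}_{\sigma(m)}$ and $c^F_m$ is independent of $F$ among facets with $m\in F$. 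Indeed, for another facet $F'\ni m$, $1$-connectedness gives a chain $F=F_0,\ldots,F_n=F'$ of facets all containing $F\cap F'\supseteq \sigma(m)$ with $\sigma_i=F_i\cap F_{i+1}$ codimension-one in both. Inductively, assuming $m\in S'_{F_i}$, write $m=s-s'$ with $s\in S_{F_i}$ and $s'\in S_{\sigma_i}$; since $m\in \sigma(m)\prec \sigma_i$, one has $s=m+s'\in \sigma_i$, hence $s\in S_{\sigma_i}$ and $m\in \Lambda_{\sigma_i}$; condition 2) at $\sigma_i$ gives $c^{F_i}_m=c^{F_{i+1}}_m$, and condition 1) for $F_{i+1}$ promotes this to $m\in S'_{F_{i+1}}$, closing the induction. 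Setting $c_m:=c^F_m$ then yields $f=\sum_m c_m\chi^m\in k[\tilde{\cM}]$.

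The main obstacle is this inductive propagation: it crucially combines the $1$-connectedness of $\Delta$ (producing the chain of facets) with the cone-geometric identity $s=m+s'\in \sigma_i$ (forcing the representative into the right subsemigroup at each step), and without both ingredients one cannot guarantee that nonzero coefficients propagate consistently across all facets containing $m$.
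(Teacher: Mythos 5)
Your proof is correct and follows essentially the same route as the paper: both identify the $S_2$-closure ring $R$ with compatible collections $(f_F)_F$ satisfying the two regularity/agreement conditions, and both use $1$-connected chains of facets to propagate the coefficients $c^F_m$ across all facets containing $m$. The only differences are presentational: you verify the monoidal-complex axioms for $\tilde{\cM}$ directly and carry the memberships $m\in S'_{F_i}$ and $m\in\Lambda_{\sigma_i}$ explicitly through the induction, whereas the paper propagates equality of restrictions of the functions $f_{F_i}$ and then identifies the resulting ring with $k[\tilde{\cM}]$ via a combinatorial analysis of intersections of facets.
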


\begin{proof} Since $\Delta$ is $1$-connected, the irreducible components of $X$ have the same dimension.
Therefore $R$ is the ring of collections $(f_F)\in \prod_F \cO(O_F)$ satisfying the following properties:
\begin{itemize}
\item[1')] $f_F$ is regular in codimension one on $X_F$. Since $X_F=\Spec k[S_F]$, this means that 
$\Supp f_F\subset S_F'$.
\item[2')] If $F$ and $F'$ are two facets intersecting in a codimension one face, the restriction of 
$f_F$ to $X_{F\cap F'}\hookrightarrow X_F$ coincides with the restriction of $f_{F'}$ to $X_{F\cap F'} \hookrightarrow X_{F'}$.
\end{itemize} 
Since $\Delta$ is $1$-connected, 2') is equivalent to 
\begin{itemize}
\item[2'')] If $F\ne F'$ are two facets, the restriction of 
$f_F$ to $X_{F\cap F'}\hookrightarrow X_F$ coincides with the restriction of $f_{F'}$ to $X_{F\cap F'} \hookrightarrow X_{F'}$.
\end{itemize} 
Let $m\in \cup_F \Supp f_F$. The map $F\ni m\mapsto c^F_m$ is constant, by 2''). And if the constant $c_m$ is non-zero,
then $m$ belongs to $\cap_{F\ni m}S'_F$, by 1). If we set $f=\sum_m c_m\chi^m$, we have $f|_{X_F}=f_F$ for all $m$.
We conclude that $R$ identifies with the ring of finite sums $\sum_m c_m\chi^m$, such that $c_m\ne 0$ implies 
$m\in \cap_{F\ni m}S'_F$.
 
Denote 
$
\cS=\{ \cap_{i=1}^n F_i;n\ge 1,F_i\in \Delta \text{ facets} \}.
$
The facets of $\Delta$ belong to $\cS$, and if $\sigma,\tau\in \cS$, then $\sigma\cap \tau\in \cS$.
Note that $\cS$ may not contain faces of its cones.
For $\sigma\in \cS$, denote $B\sigma=\cup_{\sigma\supsetneq \tau \in \cS}\tau$. We have 
$$
\cup_{\sigma\in \Delta}\sigma=\cup_F F=\sqcup_{\tau \in \cS}\tau \setminus B\tau. 
$$
If $m\in \cup_F F$, then $\cap_{F\ni m}F$ is the unique element $\tau\in \cS$ such that
$m\in \tau \setminus B\tau$. If $\tau\in \cS$ and $m\in \tau \setminus B\tau$, then 
$
\{F;F\ni m \} = \{F; F\supseteq \tau \}.
$
Therefore $R$ is the toric face ring of the monoidal complex $\tilde{\cM}=(M,\Delta,(\tilde{S}_\sigma)_{\sigma\in \Delta})$,
where 
$$
\tilde{S}_\sigma = \bigsqcup_{\tau \in \cS} \sigma \cap (\tau\setminus B\tau)\cap \bigcap_{F\supseteq \tau}S'_F 
                          = \{m\in \sigma;m\in S'_F\ \forall F\ni m\}.
$$
\end{proof}

Putting Lemmas~\ref{1i} and~\ref{2i} together, we obtain the $S_2$-criterion for $X=\Spec k[\cM]$,
which generalizes Terai's $S_2$-criterion for Stanley-Reisner rings associated to simplicial 
complexes~\cite{Ter07}.

\begin{thm}\label{2.10} $X$ is $S_2$ if and only if the following properties hold:
\begin{itemize}
\item[1)] $\Delta$ is $1$-connected, and 
\item[2)] $S_\sigma=\{m\in \sigma;m\in S'_F\ \forall F\ni m\}$ for every $\sigma\in \Delta$,
where $S'_F$ is the $S_2$-closure of the semigroup $S_F$.
\end{itemize}
\end{thm}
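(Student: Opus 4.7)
The plan is to assemble Lemmas \ref{1i} and \ref{2i} directly, since together they already contain all the substance of the theorem. Nothing new needs to be proved; the work is a matter of reading off the equivalence from the two lemmas.

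For the forward implication, assume $X$ is $S_2$. Lemma \ref{1i} immediately yields condition 1), the $1$-connectedness of $\Delta$. With 1) in hand, Lemma \ref{2i} applies and identifies the $S_2$-closure of $X$ with $\Spec k[\tilde{\cM}]$, where $\tilde{\cM} = (M,\Delta,(\tilde{S}_\sigma)_{\sigma\in\Delta})$ and $\tilde{S}_\sigma = \{m \in \sigma; m \in S'_F \text{ for all } F \ni m\}$. Since $X$ is $S_2$, the canonical morphism $X \to \Spec k[\tilde{\cM}]$ is an isomorphism. Comparing the $T$-graded decompositions of the two coordinate rings (the closure morphism is $T$-equivariant), this forces $S_\sigma = \tilde{S}_\sigma$ for every $\sigma \in \Delta$, which is exactly condition 2).

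Conversely, assume 1) and 2). By 1), Lemma \ref{2i} again applies and presents the $S_2$-closure of $X$ as $\Spec k[\tilde{\cM}]$. Condition 2) says precisely that $S_\sigma = \tilde{S}_\sigma$ for every $\sigma \in \Delta$, so $\tilde{\cM} = \cM$ as monoidal complexes and $\Spec k[\tilde{\cM}] = X$. Hence $X$ coincides with its own $S_2$-closure, i.e., $X$ satisfies Serre's condition $S_2$.

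The only potential subtlety is checking that an isomorphism $X \isoto \Spec k[\tilde{\cM}]$ of $S_2$-closures really does translate into the pointwise equality $S_\sigma = \tilde{S}_\sigma$ rather than merely a coarser identification of semigroups. This is immediate, however: both $k[\cM]$ and $k[\tilde{\cM}]$ come with canonical $T$-weight space decompositions, and the closure map, being $T$-equivariant and an inclusion of the first into the second, is an isomorphism iff the corresponding weight sets agree in each $\sigma$. So there is no genuine obstacle at the level of the theorem itself — the real content has already been discharged in Lemmas \ref{1i} and \ref{2i}.
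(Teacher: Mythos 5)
Your proposal is correct and matches the paper's own argument, which likewise obtains Theorem~\ref{2.10} simply by combining Lemmas~\ref{1i} and~\ref{2i} and reading off the identification of $X$ with its $S_2$-closure on $T$-weight spaces. Nothing further is needed.
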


\begin{cor} Suppose each irreducible component of $X$ is $S_2$.
Then $X$ is $S_2$ if and only if $\Delta$ is $1$-connected.
\end{cor}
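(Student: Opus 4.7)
The plan is to reduce everything to Theorem~\ref{2.10}. The forward implication is immediate, since that theorem lists $1$-connectedness as a necessary condition for $X$ to be $S_2$. The real content is the reverse direction: assuming each $X_F = \Spec k[S_F]$ is $S_2$ and that $\Delta$ is $1$-connected, we must verify condition (2) of Theorem~\ref{2.10}, namely that
\[
S_\sigma = \{m \in \sigma;\ m\in S'_F\ \forall F\ni m\} \quad \text{for every } \sigma\in \Delta.
\]
Since $X_F$ is $S_2$, the irreducible case of the theorem gives $S'_F = S_F$, so the target identity becomes $S_\sigma = \{m\in \sigma;\ m\in S_F\ \forall F\ni m\}$.

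The key structural input I would use is the compatibility built into a monoidal complex: whenever $\tau \prec \sigma$ in $\Delta$, one has $S_\tau = S_\sigma \cap \tau$. Combined with the fact that $\Delta$ is a fan, this lets us relate $S_\sigma$ to the semigroups on the facets through the intermediate cones.

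For the inclusion $\subseteq$, I would take $m\in S_\sigma$ and let $\tau\in \Delta$ be the unique cone with $m\in \relint(\tau)$; necessarily $\tau\prec\sigma$, so $m\in S_\sigma\cap \tau = S_\tau$. For any facet $F$ with $F\ni m$, since $m\in \relint(\tau)$ and $F$ is a closed cone of the fan, $\tau\prec F$, hence $m\in S_\tau = S_F\cap\tau\subseteq S_F$. For the inclusion $\supseteq$, take $m\in\sigma$ satisfying $m\in S_F$ for every facet $F\ni m$; again let $\tau$ be the cone with $m\in \relint(\tau)$, so $\tau\prec\sigma$, and choose any facet $F\supseteq\tau$ (every cone of a fan lies in some facet of $\Delta$). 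Then $F\ni m$, so $m\in S_F\cap\tau = S_\tau$, and finally $m\in S_\tau = S_\sigma\cap\tau\subseteq S_\sigma$.

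The argument is essentially bookkeeping once Theorem~\ref{2.10} is in hand; there is no real obstacle. It is worth noting that $1$-connectedness is not needed to verify condition (2) above—it is used only as condition (1) of Theorem~\ref{2.10}. The only subtle point is the appeal to the monoidal-complex axiom $S_\tau = S_\sigma\cap\tau$, which is what makes the irreducible $S_2$-property of each component propagate to the full $S_2$-property of $X$ once the facets are glued in a $1$-connected pattern.
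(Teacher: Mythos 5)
Your proof is correct and takes the same route as the paper, which states this corollary without proof as an immediate consequence of Theorem~\ref{2.10}: the hypothesis that each component is $S_2$ gives $S'_F=S_F$, and your verification that condition (2) then follows from the monoidal-complex compatibility $S_\tau=S_\sigma\cap\tau$ is exactly the routine bookkeeping the paper leaves to the reader. Your observation that $1$-connectedness plays no role in checking condition (2) is also accurate.
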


\begin{cor}\label{4.11}
Suppose $X$ is seminormal, with lattice collection $\Lambda_\sigma=S_\sigma-S_\sigma$. 
Then $X$ is $S_2$ if and only if the following properties hold:
\begin{itemize}
\item[1)] $\Delta$ is $1$-connected.
\item[2)] $\Lambda_\sigma=\cap_{\sigma\subset \tau, \codim(\tau\in \Delta)=1}\Lambda_\tau$
for every $\sigma\in \Delta$ of positive codimension.
\end{itemize}
\end{cor}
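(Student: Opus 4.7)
The plan is to derive Corollary~\ref{4.11} from Theorem~\ref{2.10} combined with the irreducible seminormal $S_2$ criterion proved earlier in this section. Condition (1) is common to both statements, so the real task is to identify condition (2) of Theorem~\ref{2.10} with condition (2) of the corollary under the seminormality hypothesis on $X$.

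My first step is to describe the $S_2$-closure $S'_F$ of each $S_F$ combinatorially. Since $X_F=\Spec k[S_F]$ is irreducible and seminormal, with lattice family $(\Lambda_\sigma)_{\sigma\prec F}$, the proof of the irreducible seminormal $S_2$ criterion shows that $\Spec k[S'_F]$ is again a seminormal toric variety, with enlarged lattice family
\[
\Lambda'_{\sigma,F}=\bigcap_{\tau\supset\sigma,\,\codim(\tau\prec F)=1}\Lambda_\tau \qquad(\sigma\precnsim F)
\]
and $\Lambda'_{F,F}=M\cap F-M\cap F$, so that $S'_F=\sqcup_{\sigma\prec F}\Lambda'_{\sigma,F}\cap\relint\sigma$.

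Next I would unfold Theorem~\ref{2.10}(2), which reads $S_\sigma=\{m\in\sigma\colon m\in S'_F\ \forall F\ni m\}$. Any $m\in\sigma$ lies in $\relint\tau$ for a unique face $\tau\prec\sigma$, and the facets containing $m$ are exactly those containing $\tau$. Under $1$-connectivity $\Delta$ is pure-dimensional, hence the codim-$1$ cones of $\Delta$ containing $\tau$ are precisely the codim-$1$ faces in the various facets $F\supset\tau$. This yields
\[
\bigcap_{F\supset\tau}\Lambda'_{\tau,F}=\bigcap_{\tau'\supset\tau,\,\codim(\tau'\in\Delta)=1}\Lambda_{\tau'}.
\]
Comparing with the seminormal decomposition $S_\sigma=\sqcup_{\tau\prec\sigma}\Lambda_\tau\cap\relint\tau$, Theorem~\ref{2.10}(2) becomes, for every $\tau\in\Delta$ appearing as a face of some $\sigma$, the equality of $\Lambda_\tau$ with $\bigcap_{\tau'\supset\tau,\,\codim(\tau'\in\Delta)=1}\Lambda_{\tau'}$ after restriction to $\relint\tau$.

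Finally, by~\cite[Lemma 2.9]{TFR1}, equality of finite-index sublattices of $M\cap\tau-M\cap\tau$ upon restriction to $\relint\tau$ lifts to an equality of the lattices themselves. This yields condition (2) of the corollary for every $\tau\in\Delta$ of positive codimension; for $\tau=F$ a facet, the empty intersection equals $\Lambda_F=M\cap F-M\cap F$ by convention and the condition holds trivially. The main obstacle I expect is the explicit extraction of the lattice family $\Lambda'_{\sigma,F}$ describing $S'_F$ from the proof of the irreducible criterion (which was stated only as an equivalence, not as a description of the closure), together with the careful combinatorial identification of codim-$1$ cones across facets of $\Delta$ under $1$-connectivity.
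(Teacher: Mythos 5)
Your proposal is correct and follows the route the paper intends: Corollary~\ref{4.11} is stated without a separate proof, as an immediate consequence of Theorem~\ref{2.10} together with the irreducible seminormal criterion, and your argument supplies exactly the needed links --- extracting from the proof of the irreducible criterion that the $S_2$-closure of a seminormal $S_F$ is the seminormal semigroup with lattice family $\bigcap_{\tau\supset\sigma}\Lambda_\tau$, identifying the codimension-one cones of $\Delta$ through a given face with the codimension-one faces of the facets containing it (using pure-dimensionality from $1$-connectivity), and lifting equality on relative interiors to equality of lattices via \cite[Lemma 2.9]{TFR1}. The only cosmetic slip is that the top lattice of $S'_F$ is $S_F-S_F$ rather than $M\cap F-M\cap F$; this is harmless since the $S_2$-closure does not change the top lattice and it does not enter condition (2).
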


So to give $X=\Spec k[\cM]$ which is seminormal and $S_2$, it is equivalent to give
the lattice $M$, a $1$-connected fan $\Delta$ in $M$, sublattices of finite index
$\Lambda_F\subseteq M\cap F-M\cap F$ for each facet $F$ of $\Delta$, and 
sublattices of finite index
$\Lambda_\tau \subseteq M\cap \tau -M\cap \tau$ for each cone $\tau$ of $\Delta$
of codimension one (subject to the condition $\Lambda_\tau\subseteq  \Lambda_F\cap \tau-\Lambda_F\cap \tau$
if $\tau\prec F$). Moreover, $X/k$ is weakly normal if and only if $\Char(k)$ does not divide
the incidence numbers $d_{X_\tau\subset X_F} \ (\tau\prec F)$.

Let $\pi\colon \bar{X}\to X$ be the normalization. Then $\bar{X}=\sqcup_F \bar{X}_F$,
where the direct sum is over all facets of $\Delta$, $\bar{X}_F=\Spec k[\bar{S_F}]$ and
$\bar{S_F}=(S_F-S_F)\cap F$. We compute the conductor ideal $I$ of $\pi$. 
The normalization induces an inclusion of $k$-algebras 
$$
k[\cup_{\sigma\in \Delta}S_\sigma]\to \prod_F k[\bar{S_F}], f\mapsto (f|_{\bar{X_F}})_F.
$$ 
The ideal $I$ consists of $f\in \cO(X)$ such that 
$f\cdot \cO(\bar{X})\subseteq \cO(X)$. It is $T$-invariant, hence of the form 
$$
I=\oplus_{m\in \cup_\sigma S_\sigma\setminus A} k\cdot \chi^m
$$
for a certain set $A$ which it remains to identify. Now $\chi^m\in I$ if and only if 
$\chi^m\cdot (f_F,0,\ldots,0)\in \cO(X)$ for every facet $F$ and every $f_F\in \cO(\overline{X_F})$;
if and only if, for every facet $F$, 
$$
\chi^m\cdot (\chi^a,0,\ldots,0)=(\chi^m|_F\cdot \chi^a,0,\ldots,0)\in \cO(X)
$$ 
for every $a\in \bar{S_F}$; if and only if $m+a\in S_F\setminus \cup_{F'\ne F}F\cap F'$, for all $F\ni m$ 
and $a\in \bar{S_F}$. Therefore 
$$
 \cup_\sigma S_\sigma\setminus A= \{m; m+\bar{S_F}\subset S_F\setminus BF\ \forall F\ni m \}.
$$

If $X$ is seminormal, the conductor subschemes $C\subset X$ and $\bar{C}\subset \bar{X}$ 
are reduced, described as follows.

\begin{lem}\label{4.12}
Suppose $X$ is seminormal. Let $\Delta'$ be the subfan of cones $\sigma\in \Delta$ which 
either are contained in at least two facets of $\Delta$, or are contained in a unique facet $F$ of $\Delta$
and $S_\sigma-S_\sigma\subsetneq (\bar{S_F})_\sigma-(\bar{S_F})_\sigma$.
Then $C=\cup_{\sigma\in\Delta'} X_\sigma$ and 
$\bar{C}=\sqcup_F \cup_{\sigma\in \Delta',\sigma\prec F} (\bar{X}_F)_\sigma$.
\end{lem}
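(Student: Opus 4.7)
The plan is to compare the conductor ideal $I\subset\cO(X)$ directly with the ideal of $Y:=\bigcup_{\sigma\in\Delta'}X_\sigma$ regarded as a reduced closed subscheme of $X$. Since $X$ is seminormal, $I$ is automatically radical, so both $C$ and $Y$ are reduced and the first equality of the lemma reduces to the combinatorial identity
$$A=|\cM|\cap\bigcup_{\sigma\in\Delta'}\sigma,$$
where
$$A=\bigl\{m\in |\cM|\mid \exists\,F\ni m,\ \bar s\in \bar{S_F}\text{ with }m+\bar s\in BF\text{ or }m+\bar s\notin S_F\bigr\}$$
was already computed just before the lemma. The main tool is the seminormal decomposition $|\cM|=\bigsqcup_{\sigma_0\in\Delta}\Lambda_{\sigma_0}\cap\relint\sigma_0$, combined with the compatibility $\Lambda_\tau\subseteq\Lambda_\sigma$ for $\tau\prec\sigma$.

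For the inclusion $A\subseteq\bigcup_{\sigma\in\Delta'}\sigma$, I pick a witness $F,\bar s$ for $m\in A$. If $m+\bar s\in BF$, then $m+\bar s\in\tau$ for some $\tau=\bigcap_i F_i\in\cS$ with $\tau\subsetneq F$; fan axioms force $\tau\prec F$, and since $\tau\subsetneq F$ at least one $F_i\ne F$, so $\tau$ lies in two distinct facets and is in $\Delta'$. As $m,\bar s\in F$ sum to a point of the face $\tau$, both summands lie in $\tau$, giving $m\in\tau$. If instead $m+\bar s\notin S_F$, let $\sigma'\prec F$ be the face whose relative interior contains $m+\bar s$; the seminormal decomposition of $S_F$ gives $m+\bar s\notin\Lambda_{\sigma'}$ while $m+\bar s\in\Lambda_F\cap(\sigma'-\sigma')$, so $\Lambda_{\sigma'}\subsetneq\Lambda_F\cap(\sigma'-\sigma')$. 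This puts $\sigma'\in\Delta'$ (the first type if $\sigma'$ meets another facet, the second otherwise), and the same face-splitting argument gives $m\in\sigma'$.

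For the reverse inclusion, let $m\in|\cM|\cap\sigma$ with $\sigma\in\Delta'$, and let $\sigma_0\prec\sigma$ be the unique face with $m\in\Lambda_{\sigma_0}\cap\relint\sigma_0$. If $\sigma$ meets two distinct facets $F\ne F'$, then $\bar s=0\in\bar{S_F}$ suffices since $m\in F\cap F'\subset BF$. If $\sigma$ lies only in the facet $F_0$ and $\Lambda_\sigma\subsetneq\Lambda_{F_0}\cap(\sigma-\sigma)$, I pick $\bar s'\in\Lambda_{F_0}\cap(\sigma-\sigma)\setminus\Lambda_\sigma$ and translate by a sufficiently large element of $\Lambda_\sigma\cap\relint\sigma$ to obtain $\bar s\in\relint\sigma\cap\Lambda_{F_0}\subset\bar{S_{F_0}}$ with $\bar s\notin\Lambda_\sigma$. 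Then $m+\bar s\in\relint\sigma\cap\Lambda_{F_0}$, and since $m\in\Lambda_{\sigma_0}\subseteq\Lambda_\sigma$ while $\bar s\notin\Lambda_\sigma$, we get $m+\bar s\notin\Lambda_\sigma$, hence $m+\bar s\notin S_{F_0}$ by seminormality, exhibiting $m\in A$.

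With $C=\bigcup_{\sigma\in\Delta'}X_\sigma$ established, the description of $\bar C$ follows by decomposing $\bar X=\bigsqcup_F\bar{X_F}$ and noting that $\bar C\cap\bar{X_F}=\pi_F^{-1}(C\cap X_F)$, where $\pi_F\colon\bar{X_F}\to X_F$ is the normalization. From the first part one reads off $C\cap X_F=\bigcup_{\sigma\in\Delta',\,\sigma\prec F}X_\sigma$, and $\pi_F^{-1}(X_\sigma)=(\bar{X_F})_\sigma$ as a reduced subscheme (again by seminormality). Summing over $F$ yields the stated formula for $\bar C$. The main delicacy is the reverse inclusion above, where $\bar s$ has to be chosen so as to simultaneously land in the relative interior of $\sigma$, sit in the lattice $\Lambda_{F_0}$, and avoid $\Lambda_\sigma$.
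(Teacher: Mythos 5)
Your proof is correct and follows essentially the same route as the paper: both reduce the lemma to the combinatorial identity that the monomial support of the conductor ideal is the complement of $|\cM|\cap\bigcup_{\sigma\in\Delta'}\sigma$, and both verify the two inclusions using the seminormal stratification $S_F=\bigsqcup_{\sigma\prec F}\Lambda_\sigma\cap\relint\sigma$ together with the fact that a face of a cone splits any sum landing in it. The only (harmless) differences are organizational: the paper argues one direction by contradiction where you construct an explicit witness $\bar s\in\relint\sigma\cap\Lambda_{F_0}\setminus\Lambda_\sigma$, and you spell out the deduction of the formula for $\bar C$, which the paper leaves implicit.
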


\begin{proof} It suffices to show that the ideal $I$ is radical, hence equal to the ideal of 
union $\cup_{\sigma\in \Delta'}X_\sigma\subset X$.
Indeed, let $m+\bar{S_F}\subset S_F\setminus BF$ for all $F\ni m$.
Assuming $m\in S_\sigma$ for some $\sigma\in \Delta'$, we derive a contradiction.
We have two cases: suppose $\sigma$ is contained in at least two facets $F\ne F'$.
Then $m\in BF$, a contradiction.
Suppose $\sigma$ is contained in a unique facet $F$. Then $m+(\bar{S_F})_\sigma 
\subset (S_F)_\sigma=S_\sigma$. Then $S_\sigma-S_\sigma= 
(\bar{S_F})_\sigma-(\bar{S_F})_\sigma$, that is $\sigma\notin\Delta'$. Contradiction!

Conversely, let $m\in \cup_{\sigma\in \Delta}S_\sigma\setminus \cup_{\sigma\in \Delta'}\sigma$.
Let $m\in F$ be a facet. We must show $m+\bar{S_F}\subset S_F\setminus BF$.
Indeed, let $\bar{s}\in \bar{S_F}$. Then $m+\overline{s}\in \relint \sigma$ for a unique
face $\sigma\prec F$. It follows that $m,\bar{s}\in \sigma$.
If $m+\bar{s}\in BF$, then $m+\bar{s}\in F\cap F'$ for some $F'\ne F$. Then $m,\bar{s}\in F\cap F'$.
Then $m\in F\cap F'\in \Delta'$, a contradiction. Therefore $m+\bar{s}\notin BF$.
On the other hand, $\sigma\notin \Delta'$, that is $S_\sigma-S_\sigma= 
(\bar{S_F})_\sigma-(\bar{S_F})_\sigma$. As in the irreducible case, the seminormality of $X_F$ and its
normalization implies $m+\bar{s} \in S_\sigma\cap \relint \sigma$. Therefore $m+\bar{s} \in S_F$.
\end{proof}


\subsection{The core}

Let $X=\Spec k[\cM]$ be seminormal and $S_2$. Define the {\em core of $X$} to be $X$ if $X$ is normal, 
and the intersection of the irreducible components of the non-normal locus $C$, if $X$ is not normal.

\begin{prop}\label{coredef}
The core of $X$ is normal.
\end{prop}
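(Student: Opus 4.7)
The plan is to identify the core as an explicit invariant subvariety $X_\rho$ of $X$ and to verify its normality through a lattice-saturation condition on the seminormal affine toric variety $X_\rho$. Assume $X$ is not normal, so $\Delta'\ne\emptyset$. By Lemma~\ref{4.12}, $C=\bigcup_{\sigma\in\Delta'}X_\sigma$, and its irreducible components are $X_{\tau_1},\dots,X_{\tau_s}$, where $\tau_1,\dots,\tau_s$ are the cones that are maximal (under inclusion) in the subfan $\Delta'$. Using the identity $X_{\sigma_1}\cap X_{\sigma_2}=X_{\sigma_1\cap\sigma_2}$ for cones of a fan, the core is therefore $X_\rho$, with $\rho=\bigcap_i\tau_i\in\Delta$. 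Note that $\rho$ is not a facet: otherwise $\rho\subseteq\tau_i$ would force $\tau_i=\rho$, contradicting that no facet lies in $\Delta'$.

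Next I would translate normality of $X_\rho$ into a combinatorial condition. Since $X_\rho$ inherits seminormality from $X$, the seminormal decomposition recalled before Theorem~\ref{2.10}, together with \cite[Lemma~2.9]{TFR1}, shows that $X_\rho$ is normal if and only if $\Lambda_{\rho'}=\Lambda_\rho\cap(\rho'-\rho')$ for every face $\rho'\prec\rho$, where $\Lambda_\sigma=S_\sigma-S_\sigma$. The inclusion $\Lambda_{\rho'}\subseteq\Lambda_\rho\cap(\rho'-\rho')$ is automatic. For the reverse I would invoke Corollary~\ref{4.11} to write $\Lambda_{\rho'}=\bigcap_\sigma\Lambda_\sigma$, the intersection ranging over codimension-one $\sigma\supseteq\rho'$ in $\Delta$, and split the index set by whether $\sigma\supseteq\rho$ or not. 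The terms with $\sigma\supseteq\rho$ intersect to $\Lambda_\rho$, again by Corollary~\ref{4.11}; so the work reduces to proving $\Lambda_\sigma\supseteq\Lambda_\rho\cap(\rho'-\rho')$ for each codimension-one $\sigma\supseteq\rho'$ with $\sigma\not\supseteq\rho$.

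For such a $\sigma$ I first observe that $\sigma\notin\Delta'$: any codimension-one cone in $\Delta'$ is automatically maximal in $\Delta'$ (the only cones of $\Delta$ strictly above it are facets, which never lie in $\Delta'$), so if $\sigma\in\Delta'$ then $\sigma=\tau_k$ for some $k$, forcing $\rho\subseteq\tau_k=\sigma$, a contradiction. By Lemma~\ref{4.12}, $\sigma$ therefore lies in a unique facet $F$ with $\Lambda_\sigma=\Lambda_F\cap(\sigma-\sigma)$. The main obstacle, and the heart of the argument, is to prove that $F\supseteq\rho$. My plan is to exploit $1$-connectedness of $\Delta$: the facet $F$ shares some codimension-one face $\sigma'=F\cap F'$ with another facet $F'$; since $\sigma'$ sits in two facets it lies in $\Delta'$, and, being codimension-one, is maximal in $\Delta'$. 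Hence $\sigma'$ is one of the $\tau_k$, and $\rho\subseteq\sigma'\subseteq F$. (The case of a single facet is trivial, as every cone of $\Delta$ is contained in it.)

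Finally, granted $F\supseteq\rho$, the inclusion $\Lambda_\rho\subseteq\Lambda_F$ follows from $\Lambda_\rho\subseteq\Lambda_{\sigma''}\subseteq\Lambda_F\cap(\sigma''-\sigma'')\subseteq\Lambda_F$, applied to any codimension-one face $\sigma''\prec F$ containing $\rho$ (such $\sigma''$ exists because $\rho$ is a proper face of $F$). Combined with $\rho'-\rho'\subseteq\sigma-\sigma$, this yields $\Lambda_\rho\cap(\rho'-\rho')\subseteq\Lambda_F\cap(\sigma-\sigma)=\Lambda_\sigma$, the missing inclusion. Assembling the cases gives $\Lambda_{\rho'}=\Lambda_\rho\cap(\rho'-\rho')$ for every $\rho'\prec\rho$, so $X_\rho$ is normal.
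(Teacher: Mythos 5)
Your proof is correct and follows essentially the same route as the paper's: both identify the core as the invariant subvariety attached to the intersection of the maximal cones of $\Delta'$, reduce normality to a lattice identity using the seminormal decomposition, and establish that identity via Corollary~\ref{4.11} by splitting the codimension-one cones over a given face according to whether they lie in $\Delta'$ (i.e.\ in $C$) or not. The only substantive difference is bookkeeping: the paper exhibits $S_{\sigma(\Delta)}$ directly as the trace of a candidate lattice that already includes $\bigcap_F(S_F-S_F)$ over all facets, which lets it bypass your extra (correct) $1$-connectedness argument showing that the unique facet containing a codimension-one cone outside $\Delta'$ must contain the core.
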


\begin{proof} Let $\{F\}$ and $\{\tau_i\}$ be the facets and codimension one faces of $\Delta$,
respectively. The core of $X$ is the invariant closed subvariety $X_{\sigma(\Delta)}$, where 
$$
\sigma(\Delta)=\bigcap_F F\cap \bigcap_{X_{\tau_i}\subset C}\tau_i.
$$
Indeed, if $X$ is normal, $\Delta$ has a unique facet $F$ and $C=\emptyset$, hence $\sigma(\Delta)=F$.
If $X$ is not normal, each facet contains some irreducible component of $C$, hence 
$
\sigma(\Delta)=\cap_{X_{\tau_i}\subset C}\tau_i.
$

We claim that $S_{\sigma(\Delta)}=\cap_F(S_F-S_F)\cap \bigcap_{X_{\tau_i}\subset C}(S_{\tau_i}-S_{\tau_i})
\cap \sigma(\Delta)$.
Indeed, the inclusion $\subseteq$ is clear. For the converse, let $m$ be an element on the right hand side.
Then $m\in \relint \tau$ for same $\tau\prec \sigma(\Delta)$.
Let $\tau_i$ be a codimension one face which contains $\tau$. If $X_{\tau_i}\subset C$, then $m\in S_{\tau_i}-S_{\tau_i}$
by assumption. If $X_{\tau_i}\not\subset C$, there exists a unique facet $F$ which contains $\tau_i$, and 
$S_{\tau_i}-S_{\tau_i}=(S_F-S_F)\cap \tau_i-(S_F-S_F)\cap \tau_i$. By assumption, $m$ belongs to the right
hand side. We conclude that $m\in \cap_{\tau_i\succ \tau}S_{\tau_i}-S_{\tau_i}$. By Corollary~\ref{4.11},
this means $m\in S_\tau-S_\tau$. Then $m$ belongs to $(S_\tau-S_\tau)\cap \relint \tau$, which is contained in
$S_\tau$ since $X$ is seminormal. Therefore $m\in S_\tau$, hence $m\in S_{\sigma(\Delta)}$.

From the claim, $S_{\sigma(\Delta)}$ is the trace on $\sigma(\Delta)$ of some lattice. Therefore
$$
S_{\sigma(\Delta)}-S_{\sigma(\Delta)}=\cap_F(S_F-S_F)\cap \bigcap_{X_{\tau_i}\subset C}(S_{\tau_i}-S_{\tau_i}).
$$
and $S_{\sigma(\Delta)}=(S_{\sigma(\Delta)}-S_{\sigma(\Delta)})\cap \sigma(\Delta)$, that is 
$X_{\sigma(\Delta)}$ is normal.
\end{proof}

\begin{cor}\label{ncn}
Suppose the non-normal locus $C$ of $X$ is not empty. Then either $C$ is irreducible and normal, or 
$C=\cup_iC_i$ is reducible and its non-normal locus is $\cup_{i\ne j}C_i\cap C_j$.
\end{cor}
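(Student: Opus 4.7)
The plan is to split along the hypothesis $C$ irreducible vs.\ reducible, and in each case reduce to Proposition~\ref{coredef} or to its proof technique.

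\textbf{If $C$ is irreducible:} the intersection of the irreducible components of $C$ degenerates to $C$ itself, so by definition $C$ is the core of $X$. Proposition~\ref{coredef} then immediately gives that $C$ is normal, disposing of the first alternative. (Consistency with the formula $\sigma(\Delta)=\bigcap_F F\cap\bigcap_{X_{\tau_i}\subset C}\tau_i$ used in that proof is automatic: the unique component of $C$ is contained in every facet, as noted in the proof of Proposition~\ref{coredef}.)

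\textbf{If $C=\cup_iC_i$ is reducible:} I would prove the equality of the non-normal locus of $C$ with $\cup_{i\ne j}C_i\cap C_j$ by double inclusion. The inclusion $\supseteq$ is immediate: at a point $x\in C_i\cap C_j$ with $i\ne j$, two distinct irreducible components of $C$ pass through $x$, so $C$ fails to be unibranch and is in particular not normal at $x$. For the nontrivial inclusion $\subseteq$, I would pick $x\in C_i$ lying on no other $C_j$, observe that $C$ and $C_i$ coincide in a Zariski neighborhood of $x$, and reduce to showing that $C_i$ is normal at $x$.

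To verify normality of $C_i$ at $x$, I would use that $X$ being reduced and $S_2$ forces $C$ to have pure codimension one, hence $C_i=X_{\tau_i}$ for a codimension one face $\tau_i$ of $\Delta$ that is a maximal cone of $\Delta'$ (notation of Lemma~\ref{4.12}). Writing $x\in O_\eta$ for the face $\eta\prec\tau_i$ whose orbit contains $x$, my hypothesis translates to $\eta\not\prec\tau_j$ for every $j\ne i$. Via seminormality, normality of $C_i$ at $O_\eta$ is equivalent to the lattice identity $\Lambda_\eta=\Lambda_{\tau_i}\cap(\eta-\eta)$. By Corollary~\ref{4.11}, $\Lambda_\eta=\bigcap_\tau \Lambda_\tau$ over codimension one faces $\tau\supset\eta$ in $\Delta$. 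For any $\tau\ne\tau_i$ appearing in this intersection, our hypothesis on $\eta$ precludes $\tau\in\{\tau_j\}_j$, so $X_\tau\not\subseteq C$; hence $\tau$ lies in a unique facet $F$ with $\Lambda_\tau=(\bar S_F)_\tau-(\bar S_F)_\tau$, the full lattice of the normalization of $X_F$ at $\tau$. A careful intersection chase, modelled on the computation in the proof of Proposition~\ref{coredef}, then collapses the intersection to $\Lambda_{\tau_i}\cap(\eta-\eta)$ and yields the desired normality.

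The crux is this last combinatorial step: tracking how the codimension one faces $\tau\supset\eta$ distribute between the $\tau_j$'s (which the hypothesis on $\eta$ excludes) and the codimension one faces of $\Delta$ corresponding to normal points of $X$. It parallels the lattice manipulation in the proof of Proposition~\ref{coredef}, but applied to a single component $C_i$ instead of to the core $X_{\sigma(\Delta)}$.
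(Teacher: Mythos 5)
Your treatment of the irreducible case and of the inclusion $\supseteq$ coincides with the paper's, and your opening move for the hard inclusion is also the paper's: pick $x\in C_i\setminus\cup_{j\ne i}C_j$ and note that $C$ and $C_i$ agree near $x$. But at exactly that point the paper stops computing: after passing to the invariant open neighborhood of $x$ (again a seminormal, $S_2$ toric variety), the non-normal locus \emph{is} the irreducible variety $C_i$, so one is back in the first case --- $C_i$ is the core of the localized variety and Proposition~\ref{coredef} gives normality immediately. You instead launch a direct lattice verification, and the step you yourself label the crux is left unexecuted; as sketched, it has a genuine gap.

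Concretely: writing $x\in O_\eta$, you must show $\Lambda_{\tau_i}\cap(\eta-\eta)\subseteq\Lambda_\tau$ for every codimension one face $\tau\ne\tau_i$ of $\Delta$ containing $\eta$. For such $\tau$ your hypothesis gives $\tau\notin\Delta'$, so by Lemma~\ref{4.12} $\tau$ lies in a unique facet $F$ and $\Lambda_\tau=\Lambda_F\cap(\tau-\tau)$; the needed inclusion therefore reduces to $\Lambda_{\tau_i}\cap(\eta-\eta)\subseteq\Lambda_F$. This is clear when $\tau_i\prec F$, since then $\Lambda_{\tau_i}\subseteq\Lambda_F$; but nothing in your sketch excludes a facet $F\ni\eta$ with $\tau_i\not\prec F$, and for such an $F$ there is no a priori relation between $\Lambda_{\tau_i}$ and $\Lambda_F$. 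Tracking how the codimension one faces through $\eta$ ``distribute'' does not touch this: the obstruction lives at the level of facets, not of codimension one faces. The missing input is that $\tau_i\prec F$ for \emph{every} facet $F\ni\eta$. This does hold, but only via $1$-connectedness of $\Delta$ (Lemma~\ref{1i}): if $F\ne F'$ are facets containing $\eta$ (with $\tau_i\prec F'$, say), the connecting chain through $F\cap F'\supseteq\eta$ begins with a facet meeting $F$ in a codimension one face containing $\eta$; that face lies in $\Delta'$, hence is a component of $C$ through $O_\eta$, hence equals $\tau_i$ by your choice of $x$. This is precisely the observation ``each facet contains some irreducible component of $C$'' from the proof of Proposition~\ref{coredef}. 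With it your intersection chase closes; without it, it stalls --- and once you have localized, citing Proposition~\ref{coredef} directly, as the paper does, makes the whole computation unnecessary.
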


\begin{proof} Suppose $C$ is irreducible. Then $C$ is the core of $X$, hence normal by 
Proposition~\ref{coredef}. Suppose $C$ is reducible, with irreducible components
$C_i$. If $C_i\ne C_j$, the intersection $C_i\cap C_j$ is contained in the non-normal locus
of $C$. Therefore the non-normal locus of $C$ contains $\cup_{i\ne j}C_i\cap C_j$.
On the other hand, $C_i\setminus \cup_{j\ne i}C_j$ is normal (after localization, we obtain 
$C=C_i$ irreducible, hence normal by the above argument). Therefore the non-normal locus 
of $C$ is $\cup_{i\ne j}C_i\cap C_j$.
\end{proof}


\section{Weakly normal log pairs}


Let $X/k$ be an algebraic variety, weakly normal and $S_2$.
Let $\pi\colon \bar{X}\to X$ be the normalization. The ideal sheaf $\{f\in \cO_X; 
f\cdot \pi_*\cO_{\bar{X}}\subseteq \cO_X\}$ is also an ideal sheaf on $\bar{X}$,
and cuts out the {\em conductor subschemes} $C\subset X$ and $\bar{C}\subset \bar{X}$.
We obtain a cartesian diagram
\[ 
\xymatrix{
  \bar{X} \ar[d]_\pi & \bar{C} \ar@{_{(}->}[l]  \ar[d]^\pi    \\
    X       & C  \ar@{_{(}->}[l]  
} \]
Each irreducible component of $X$ has the same dimension, equal to $d=\dim X$.
Both $C\subset X$ and $\bar{C}\subset \bar{X}$ are reduced subschemes, of pure
codimension one, and {\em $C$ is the non-normal locus of $X$}. The morphism $\pi\colon \bar{C}\to C$
is finite, mapping irreducible components onto irreducible components.
Denote by $Q(X)$ the $k$-algebra consisting of rational functions which are regular on 
an open dense subset of $X$. We have an isomorphism $\pi^*\colon Q(X)\isoto Q(\bar{X})$ and
a monomorphism $\pi^*\colon Q(C)\to Q(\bar{C})$. 
Let $B$ be the closure in $X$ of a $\Q$-Cartier divisor on the smooth locus of $X$,
and $\bar{B}$ the closure in $\bar{X}$. 

\begin{defn}
For $n\in \Z$, define a coherent $\cO_X$-module $\omega^{[n]}_{(X/k,B)}$ as follows:
for an open subset $U\subseteq X$, let $\Gamma(U,\omega^{[n]}_{(X/k,B)})$ be the set of 
rational $n$-differential forms $\omega\in (\wedge^d \Omega^1_{Q(X)/k})^{\otimes n}$ satisfying 
the following properties 
\begin{itemize}
\item[a)] $(\pi^*\omega)+n(\bar{C}+\bar{B})\ge 0$ on $\pi^{-1}(U)$.
\item[b)] If $P$ is an irreducible component of $C\cap U$, there exists a rational 
$n$-differential form $\eta\in (\wedge^{d-1} \Omega^1_{Q(P)/k})^{\otimes n}$ such that 
$\Res_Q \pi^*\omega=\pi^*\eta$ for every irreducible component $Q$ of $\bar{C}$ lying over $P$.
\end{itemize}
\end{defn}

We have natural multiplication maps 
$
\omega^{[m]}_{(X/k,B)} \otimes_{\cO_X} \omega^{[n]}_{(X/k,B)} \to \omega^{[m+n]}_{(X/k,B)} \ (m,n\in \Z).
$
By seminormality, $\omega^{[0]}_{(X/k,B)}=\cO_X$.

\begin{lem}\label{c1f} 
Suppose $rB$ has integer coefficients in a neighborhood of a codimension one point $P\in X$.
Then in a neighborhood of $P$, $\omega^{[r]}_{(X/k,B)}$ is invertible and $(\omega^{[r]}_{(X/k,B)})^{\otimes n}
\isoto \omega^{[rn]}_{(X/k,B)}$ for all $n\in \Z$.
\end{lem}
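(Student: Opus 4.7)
The statement is local at $P$, so we work in a neighborhood of $P$ in which $rB$ has integer coefficients, and treat separately the cases $P\notin C$ and $P\in C$. If $P\notin C$, then $X$ is normal at $P$; combined with the $S_2$ hypothesis this forces $\cO_{X,P}$ to be a DVR, so $X$ is regular at $P$. Near $P$ no component of $C$ appears, so condition (b) in the definition of $\omega^{[n]}_{(X/k,B)}$ is vacuous and the definition reduces to that for normal log pairs from Section~1. Writing $t$ for a local parameter at $P$, $b=\mult_P B$ (so $rb\in\Z$), and $\omega_0$ for a local generator of $\wedge^d\Omega^1_{X/k}$ near $P$, the form $\omega=t^{-rb}\omega_0^{\otimes r}$ locally generates $\omega^{[r]}_{(X/k,B)}$, and $\omega^{\otimes n}$ generates $\omega^{[rn]}_{(X/k,B)}$, giving both assertions.

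Now assume $P\in C$. Let $\pi^{-1}(P)=\{Q_1,\dots,Q_s\}$, each a regular codimension-one point of $\bar X$ with local parameter $t_i$ and $b_i=\mult_{Q_i}\bar B$ (so $rb_i\in\Z$). On $\bar X$, the $\Q$-divisor $r(\bar C+\bar B)$ is integral at each $Q_i$, so by the normal case $\omega^{[r]}_{(\bar X/k,\bar C+\bar B)}$ is invertible there; let $\bar\omega$ be a local generator, with nowhere-zero residues $\bar\eta_i:=\Res^{[r]}_{Q_i}\bar\omega$. Fix a nonzero rational pluri-form $\eta\in(\wedge^{d-1}\Omega^1_{Q(P)/k})^{\otimes r}$ and write $\bar\eta_i=h_i\cdot\pi_i^*\eta$ with $h_i\in k(Q_i)^\times$, where $\pi_i\colon Q_i\to P$ is the finite map induced by $\pi$. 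Using weak normality of $X$ in $\bar X$ (and rescaling $\eta$ by an element of $k(P)^\times$ so that the tuple $(h_i^{-1})_i$ lies in the image of $\cO_{X,P}$), produce a unit $u\in\cO_{X,P}^\times$ whose pullback to $\bar A:=\cO_{\bar X,\pi^{-1}(P)}$ satisfies $u|_{Q_i}=h_i^{-1}$ for all $i$. Setting $\omega:=u\bar\omega$, condition (a) for $\omega$ is an equality, and $\Res^{[r]}_{Q_i}\pi^*\omega=u|_{Q_i}\bar\eta_i=\pi_i^*\eta$ is a common pullback from $P$, so $\omega\in\omega^{[r]}_{(X/k,B)}$.

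To verify $\omega$ is a local generator, take any section $\omega'$ of $\omega^{[r]}_{(X/k,B)}$ near $P$ and write $\omega'=f\omega$ with $f\in Q(X)=Q(\bar X)$. Condition (a) for $\omega'$ together with the saturation of (a) at $\omega$ yields $v_{Q_i}(f)\ge 0$ for all $i$, so $f\in\bar A$; condition (b) for $\omega'$ forces $(f|_{Q_i})_i$ to be a common pullback from $k(P)$, and weak normality of $\cO_{X,P}$ in $\bar A$ then shows $f\in\cO_{X,P}$. For the multiplication assertion, the same construction (with $\eta$ replaced by $\eta^{\otimes n}$ and $r$ by $rn$) shows $\omega^{\otimes n}$ is a local generator of $\omega^{[rn]}_{(X/k,B)}$ near $P$; hence the natural map $(\omega^{[r]}_{(X/k,B)})^{\otimes n}\to\omega^{[rn]}_{(X/k,B)}$ sends a generator to a generator and is an isomorphism. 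The main obstacle is the descent step in the second paragraph, where weak normality must be invoked to lift first $u$ and then $f$ from $\bar A$ back to $\cO_{X,P}$; once that is settled, the rest is a direct check using the residue formalism of Section~1.
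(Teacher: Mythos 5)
Your overall architecture --- splitting into $P\notin C$ and $P\in C$, producing a section of $\omega^{[r]}_{(X/k,B)}$ for which condition a) is an equality and whose residues over $P$ are a common pullback from $P$, and then using seminormality to show it generates --- is the same as the paper's, and both your normal case and your final verification that $\omega$ generates are fine. The gap is in the construction of $\omega$ when $P\in C$. You define $h_i$ by $\bar\eta_i=h_i\cdot\pi_i^*\eta$ and assert that, after rescaling $\eta$ by an element of $k(P)^\times$, the tuple $(h_i^{-1})_i$ lies in the image of $\cO_{X,P}\to\prod_i k(Q_i)$, so that the correcting unit $u$ can be taken in $\cO_{X,P}^\times$. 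This cannot work in general: by the conductor square, an element of $\cO_{X,P}$ restricts on each $Q_i$ to $\pi_i^*(g)$ for a \emph{single} $g\in k(P)$, whereas $h_i$ is an arbitrary element of $k(Q_i)^\times$ and $k(Q_i)/k(P)$ is in general a nontrivial finite extension; rescaling $\eta$ by $c\in k(P)^\times$ replaces $h_i^{-1}$ by $h_i^{-1}\cdot\pi_i^*(c)$ and therefore does not change whether the tuple is a common pullback. So the unit $u$ you ask for need not exist.

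The argument is salvageable because $u$ does not need to descend to $X$: all that is required is that $u\bar\omega$ satisfy conditions a) and b), and for that it suffices to take $u$ in the semilocal ring $\bar A=\cO_{\bar X,\pi^{-1}(P)}$ with $u\equiv h_i^{-1} \pmod{\fm_{Q_i}}$ for each $i$, which exists since $\bar A\to\prod_i k(Q_i)$ is surjective (this is exactly the weak approximation step the paper quotes from Zariski--Samuel). With that correction your proof goes through and is essentially the paper's argument run in the opposite order: the paper constructs the good generator directly at level one, as $\omega=\frac{dt_1}{t_1}\wedge dt_2\wedge\cdots\wedge dt_d$ with $t_1$ a simultaneous local parameter at all the $Q_j$ and $t_2,\ldots,t_d$ simultaneous lifts of a separating transcendence basis $f_2,\ldots,f_d$ of $k(P)/k$, so that $\Res_{Q_j}\omega=(\pi|_{Q_j})^*(df_2\wedge\cdots\wedge df_d)$ is a common pullback by construction, and then takes tensor powers; you instead start from an arbitrary generator upstairs and correct its residues by a unit. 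One further point you should make explicit: you need $\pi_i^*\eta\ne 0$ in $(\wedge^{d-1}\Omega^1_{k(Q_i)/k})^{\otimes r}$ for $h_i$ to be defined, which is where separability of $k(Q_i)/k(P)$ --- hidden in the paper's choice of a \emph{separating} transcendence basis --- enters.
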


\begin{proof} Suppose $X/k$ is smooth at $P$. 
Let $t$ be a local parameter at $P$ and $\omega_0$ a local generator of $(\wedge^d\Omega^1_{X/k})_P$,
and $b=\mult_P(B)$.
If $n\in \Z$, then $t^{-\lfloor nb\rfloor}\omega^{\otimes n}$ is a local generator of $\omega^{[n]}_{(X/k,B)}$.
The claim follows.

Suppose $X$ is singular at $P$. Let $(Q_j)_j$ be the finitely many prime divisors of $\bar{X}$ lying over $P$. 
We may localize at $P$ and suppose $C=P$, $B=0$, and $\bar{C}=\sum_j Q_j$. For every $j$,
we have finite surjective maps $\pi|_{Q_j}\colon Q_j\to P$.
By weak approximation~\cite[Chapter 10, Theorem 18]{ZS60}, there exists an invertible rational function 
$t_1\in Q(\bar{X})$ which induces a local parameter at $Q_j$, for every $j$. Let $f_2,\ldots,f_d$ be a separating
transcendence basis of $k(C)/k$. For every $2\le i\le d$, there exists $t_i\in Q(\bar{X})$, regular
at each $Q_j$, such that $t_i|_{Q_j}=(\pi|_{Q_j})^*(f_i)$ for every $j$. Set 
$$
\omega=\frac{dt_1}{t_1}\wedge dt_2\wedge\cdots \wedge dt_d\in \wedge^d\Omega^1_{Q(\bar{X})/k}.
$$ 
Since $t_1\omega$ is regular, we have $(\omega)+\bar{C}\ge 0$. On the other hand,
$$
\Res_{Q_j} \omega =(\pi|_{Q_j})^*(df_2\wedge \cdots \wedge df_d ).
$$
The right hand side is non-zero, hence $(\omega)+\bar{C}=0$. Property b) is also satisfied, so 
$\omega$ belongs to $\omega^{[1]}_{(X/k,B)}$. We claim that 
$\omega^{\otimes n}$ is a local generator of $\omega^{[n]}_{(X/k,B)}$ at $P$, for all $n\in \Z$.
Indeed, let $\omega'$ be a local section of $\omega^{[n]}_{(X/k,B)}$ at $P$. There exists a regular function
$f$ on $\bar{X}$ such that $\pi^*\omega'=f\cdot \omega^{\otimes n}$. By assumption, there exists a rational 
$n$-differential $\eta\in (\wedge^{d-1} \Omega^1_{Q(P)/k})^{\otimes n}$ such that 
$\Res_{Q_j}\pi^*\omega'=(\pi|_{Q_j})^*(\eta)$ for every irreducible component $Q_j$. 
Let $\eta=h\cdot (df_2 \wedge \cdots\wedge df_d)^{\otimes n}$. We obtain
$f|_{Q_j}=(\pi|_{Q_j})^*(h)$ for all $Q_j$. By seminormality, this means that $f\in \cO_{X,P}$.
\end{proof}

\begin{cor} Let $r\ge 1$ such that $rB$ has integer coefficients.
There exists an open subset $U\subseteq X$ such that $\codim(X\setminus U,X)\ge 2$,
$\omega^{[r]}_{(X/k,B)}|_U$ is invertible and $(\omega^{[r]}_{(X/k,B)}|_U)^{\otimes n}
\isoto \omega^{[rn]}_{(X/k,B)}|_U$ for all $n\in \Z$.
\end{cor}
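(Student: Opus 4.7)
The plan is to reduce the corollary to Lemma~\ref{c1f} by patching together the neighborhoods it produces at every codimension one point. Since the hypothesis $rB$ has integer coefficients is assumed globally on $X$, it holds in particular in a neighborhood of the generic point $\xi_P$ of every prime divisor $P\subset X$. For each such $P$, Lemma~\ref{c1f} then furnishes an open neighborhood $V_P\ni\xi_P$ on which $\omega^{[r]}_{(X/k,B)}$ is invertible \emph{and} the multiplication map $(\omega^{[r]}_{(X/k,B)})^{\otimes n}\to \omega^{[rn]}_{(X/k,B)}$ is an isomorphism for every $n\in\Z$ simultaneously. The essential feature I am exploiting is that Lemma~\ref{c1f} gives a single neighborhood valid uniformly in $n$, rather than a neighborhood $V_P^{(n)}$ depending on $n$; this spares me from having to take a countable intersection.

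Next I would set $U=\bigcup_P V_P$, the union running over all prime divisors of $X$. This is open in $X$. By construction $X\setminus U$ is closed and contains no generic point of any prime divisor, so $\codim(X\setminus U,X)\ge 2$. Both ``locally free of rank one'' and ``isomorphism of coherent sheaves'' are local properties, and each holds on every member of the open cover $\{V_P\}$ of $U$, so they hold on $U$. This gives the claimed invertibility and compatibility of tensor powers for every $n$ at once.

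I do not foresee a genuine obstacle: the substantive work has already been done in Lemma~\ref{c1f}, and the corollary is essentially a ``spread-out'' of that pointwise statement. The one step that deserves a moment's care, and which I would flag in the write-up, is precisely the observation that the neighborhood supplied by Lemma~\ref{c1f} serves \emph{all} exponents $n$ simultaneously --- otherwise one would need to intersect over $n\in\Z$ and could lose both openness and the codimension-two complement.
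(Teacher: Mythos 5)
Your argument is correct and is exactly the intended one: the paper states this corollary without proof as an immediate consequence of Lemma~\ref{c1f}, obtained by taking the union over all prime divisors of the neighborhoods that lemma provides. You also correctly flag the only point requiring care, namely that Lemma~\ref{c1f} (both in its statement and in its proof, where a single generator $\omega$ works with $\omega^{\otimes n}$ generating $\omega^{[n]}_{(X/k,B)}$ for every $n$ at once) supplies one neighborhood valid uniformly in $n$.
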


\begin{lem}\label{lg} 
Let $U\subseteq X$ be an open subset and $\omega\in (\wedge^d\Omega^1_{k(X)/k})^{\otimes n} \setminus 0$. 
Then $1\mapsto \omega$ induces an isomorphism $\cO_U\isoto \omega^{[n]}_{(X/k,B)}|_U$ if and
only if $(\pi^*\omega)+\lfloor n(\bar{C}+\bar{B})\rfloor=0$ on $\bar{U}=\pi^{-1}(U)$ and 
$\Res_{\bar{C}\cap \bar{U}}(\pi^*\omega)\in (\wedge^{d-1}\Omega^1_{Q(\bar{C}\cap \bar{U} )/k})^{\otimes n}$ belongs to the image of 
$\pi^*\colon (\wedge^{d-1}\Omega^1_{Q(C\cap U)/k})^{\otimes n}\to 
(\wedge^{d-1}\Omega^1_{Q(\bar{C}\cap \bar{U} )/k})^{\otimes n}$.
\end{lem}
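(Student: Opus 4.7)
The plan is to use $\omega\neq 0$ as a reference: every rational $n$-pluri-differential on $X$ can be written $\omega'=g\omega$ with $g\in Q(X)=Q(\bar X)$, and the map $1\mapsto\omega$ becomes multiplication by $\omega$, which is always injective. Surjectivity can be checked stalk-wise. Observe that the image-of-$\pi^*$ residue condition in the statement is exactly property (b) in the definition of $\omega^{[n]}_{(X/k,B)}$ applied to $\omega$, so my task reduces to relating the divisorial equality (call it (i)) to property (a) and to surjectivity. Writing $D=(\pi^*\omega)+\lfloor n(\bar C+\bar B)\rfloor$, condition (i) asserts $D=0$ on $\bar U$, while (a) alone only gives $D\geq 0$.

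For $(\Rightarrow)$, if the map is an isomorphism then $\omega$ is a section, so (b) and (a) hold, giving the residue condition in the statement and the inequality $D\geq 0$. To upgrade to $D=0$, I would argue locally at each codimension-one point $Q_0$ of $\bar U$. If $Q_0\not\subset \bar C$, then $\pi$ is a local isomorphism at $Q_0$ and $X$ is normal at $\pi(Q_0)$; the equality $D_{Q_0}=0$ follows from the normal case already handled in Lemma~1.1. If $Q_0\subset \bar C$ lies over a component $P$ of $C$, I would invoke Lemma~\ref{c1f}, which gives an explicit local generator $\omega_0^{\otimes n}$ of $\omega^{[n]}_{(X/k,B)}$ near $P$ (obtained by weak approximation, so that $(\pi^*\omega_0)+\bar C=0$ at every component of $\bar C$ over $P$). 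Writing $\omega=h\cdot\omega_0^{\otimes n}$ and using that $\omega$ also generates, $h$ must be a unit at $P$, and comparing orders along $Q_0$ yields $D_{Q_0}=0$.

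For $(\Leftarrow)$, assume (i) and the residue condition. Then (b) holds and (i) implies (a), so $\omega$ is a section on $U$. Given a local section $\omega'=g\omega$ on some open $V\subseteq U$, property (a) for $\omega'$ rewritten using integrality says $(g)+(\pi^*\omega)+n\bar C+\lfloor n\bar B\rfloor \geq 0$ on $\bar V$; combining with (i) for $\omega$ yields $(g)\geq 0$ on $\bar V$, so $g\in \cO_{\bar X}(\bar V)$. Property (b) for $\omega'$ combined with the residue condition for $\omega$ gives, for each component $P$ of $C\cap V$ with associated forms $\eta$ (for $\omega$) and $\eta'$ (for $\omega'$) on $P$, and each component $Q$ of $\bar C$ over $P$, the identity $g|_Q\cdot \pi^*\eta=\pi^*\eta'$ on $Q$. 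Since $\eta\neq 0$ (it is the nonzero residue of the local generator of Lemma~\ref{c1f}), this forces $g|_Q=\pi^*(\eta'/\eta)$, with $\eta'/\eta\in Q(P)$ independent of $Q$. Hence $g|_{\bar C\cap \bar V}$ lies in the image of $\pi^*\colon Q(C\cap V)\to Q(\bar C\cap \bar V)$, and by the pullback description $\cO_X=\cO_{\bar X}\times_{\cO_{\bar C}}\cO_C$ arising from $X$ being weakly normal and $S_2$, we conclude $g\in \cO_X(V)$, proving surjectivity. The main obstacle is the residue bookkeeping, namely verifying $\eta\neq 0$ so that $\eta'/\eta$ is meaningful and that the descent $g|_{\bar C}$ pulled back from $C$ is simultaneously compatible across every $Q$ over $P$; both rely on the explicit local structure provided by Lemma~\ref{c1f}.
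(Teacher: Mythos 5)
Your proof is correct and follows essentially the same route as the paper: the forward direction upgrades $(\pi^*\omega)+\lfloor n(\bar C+\bar B)\rfloor\ge 0$ to equality by comparison with the local generators constructed in Lemma~\ref{c1f}, and the converse writes a local section as $g\omega$, deduces $(g)\ge 0$ on the normalization from the divisorial equality, matches residues to get $g|_Q=\pi^*(\eta'/\eta)$ (using that $\eta$ is nowhere zero on the components of $C$), and descends $g$ to $\cO_X$ by seminormality and $S_2$. The only difference is presentational (your explicit case split at codimension-one points and the fibre-product phrasing of the descent), not mathematical.
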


\begin{proof} The homomorphism is well defined if and only if $(\pi^*\omega)+\lfloor n(\bar{C}+\bar{B})\rfloor\ge 0$ 
on $\bar{U}=\pi^{-1}(U)$ and $\Res_{\bar{C}\cap \bar{U}}(\pi^*\omega)=\pi^*\eta$ for 
$\eta\in (\wedge^{d-1}\Omega^1_{Q(C\cap U)/k})^{\otimes n}$. Suppose the homomorphism is an 
isomorphism. It follows that $(\pi^*\omega)+\lfloor n(\bar{C}+\bar{B})\rfloor=0$ on $\bar{U}$, since in the proof of
Lemma~\ref{c1f} we constructed local generators with this property near each codimension one point of $X$.
It follows that $\eta$ is non-zero on each irreducible component of $C\cap U$.

Conversely, let $V\subseteq U$ be an open subset and $\omega'\in \Gamma(V,\omega^{[n]}_{(X/k,B)})$. 
Then $\omega'=f\omega$, with $f\in \Gamma(\pi^{-1}(V),\cO_{\bar{X}})$. By definition,
$\Res_{\bar{C}}(\omega')=\pi^*\eta'$. Since $\eta$ is non-zero on each irreducible component of $C$,
$h=\eta'/\eta$ is a well defined rational function on $C\cap V$. Comparing residues, we obtain that
for every irreducible component $P$ of $V\cap C$, for every prime divisor $Q$ lying over $P$, we have
$f|_Q=\pi^*h$. Since $X$ is seminormal and $S_2$, this means that $f\in \Gamma(V,\cO_X)$. Therefore $\omega$
generates $\omega^{[n]}_{(X/k,B)}$ on $U$.
\end{proof}

\begin{cor} Suppose $rB$ has integer coefficients and $\omega^{[r]}_{(X/k,B)}$ is an invertible $\cO_X$-module. 
Then:
\begin{itemize}
\item[a)] 
$
\omega^{[r]}_{(X/k,B)} \otimes_{\cO_X} \omega^{[n]}_{(X/k,B)} \to \omega^{[r+n]}_{(X/k,B)}
$
is an isomorphism, for every $n\in \Z$. In particular, the graded $\cO_X$-algebra
$\oplus_{n\in \N}\omega^{[n]}_{(X/k,B)}$ is finitely generated, and 
$(\omega^{[r]}_{(X/k,B)})^{\otimes n}\isoto \omega^{[rn]}_{(X/k,B)}$ 
for every $n\in \Z$.
\item[b)] $\pi^* \omega^{[r]}_{(X/k,B)}= \omega^{[r]}_{(\bar{X}/k,\bar{C}+\bar{B} )}$.
\end{itemize}
\end{cor}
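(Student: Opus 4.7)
My strategy is to prove (b) first, so that the normal log pair $(\bar{X}/k,\bar{C}+\bar{B})$ becomes available, and then deduce (a) locally by exhibiting division by a generator of $\omega^{[r]}_{(X/k,B)}$ as the inverse of the multiplication map. For (b): work on an open $U\subseteq X$ where a nowhere-zero section $\omega$ generates $\omega^{[r]}_{(X/k,B)}|_U$. By Lemma~\ref{lg}, $\omega$ satisfies $(\pi^*\omega)+r(\bar{C}+\bar{B})=0$ on $\pi^{-1}(U)$; the floor is vacuous because $rB$, and hence $r\bar{B}$, has integer coefficients. On the normal variety $\bar{X}$ the residue clause in Lemma~\ref{lg} is empty (there is no non-normal locus), so this single identity characterizes $\pi^*\omega$ as a local generator of $\omega^{[r]}_{(\bar{X}/k,\bar{C}+\bar{B})}$ on $\pi^{-1}(U)$. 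Thus $\pi^*\omega^{[r]}_{(X/k,B)}$ and $\omega^{[r]}_{(\bar{X}/k,\bar{C}+\bar{B})}$ are invertible subsheaves of the constant sheaf of rational $r$-pluri-differentials on $\bar{X}$ sharing a local generator, hence coincide; in particular $(\bar{X}/k,\bar{C}+\bar{B})$ is a normal log pair.

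For (a): shrinking $U$ so that $\omega^{[r]}_{(X/k,B)}|_U=\cO_U\cdot \omega$, the multiplication map identifies with $\omega'\mapsto \omega\cdot\omega'$ from $\omega^{[n]}_{(X/k,B)}|_U$ to $\omega^{[r+n]}_{(X/k,B)}|_U$; I would exhibit $\omega''\mapsto \omega''/\omega$ as the inverse. Using the identity $(\pi^*\omega)=-r(\bar{C}+\bar{B})$ from (b), the order condition for $\omega''/\omega\in \omega^{[n]}_{(X/k,B)}|_U$ reduces to the hypothesis $(\pi^*\omega'')+(r+n)(\bar{C}+\bar{B})\ge 0$. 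At a component $Q$ of $\bar{C}$ over a component $P$ of $C$, the hypotheses on $\omega,\omega''$ give $\Res_Q\pi^*\omega=\pi^*\eta$ and $\Res_Q\pi^*\omega''=\pi^*\eta''$ on $P$, and multiplicativity of the pluri-residue yields $\Res_Q\pi^*(\omega''/\omega)=\pi^*(\eta''/\eta)$; the ratio $\eta''/\eta$ is a well-defined rational $n$-pluri-differential on $P$ because the generator $\omega$ has nonvanishing residue $\eta$, by the local construction used in the proof of Lemma~\ref{c1f}.

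The rest is formal. Iterating (a) gives $\omega^{[kr+j]}_{(X/k,B)}\isoto (\omega^{[r]}_{(X/k,B)})^{\otimes k}\otimes \omega^{[j]}_{(X/k,B)}$ for $k\ge 0$ and $0\le j<r$, so $\oplus_{n\in\N}\omega^{[n]}_{(X/k,B)}$ is a finitely generated $\cO_X$-algebra with generators in degrees $\le r$; applying (a) with $n=-r$ identifies $\omega^{[-r]}_{(X/k,B)}$ with the $\cO_X$-dual of the invertible sheaf $\omega^{[r]}_{(X/k,B)}$, and induction on $|n|$ yields $(\omega^{[r]}_{(X/k,B)})^{\otimes n}\isoto \omega^{[rn]}_{(X/k,B)}$ for every $n\in\Z$. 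The main obstacle is formalizing multiplicativity of the pluri-residue when $\mult_Q\bar{B}\not\in\Z$: then $\omega^{[1]}_{(\bar{X}/k,\bar{C}+\bar{B})}$ is not invertible at $Q$, so multiplicativity has to be derived by working with local generators of $\omega^{[r]}$ directly rather than factoring through a canonical generator of $\omega^{[1]}$.
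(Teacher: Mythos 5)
Your proof is correct and follows essentially the same route as the paper, which establishes (b) directly from Lemma~\ref{lg} and proves (a) ``as in the normal case'' using the fact that residues commute with multiplication of pluri-differential forms; you have simply written out the details. The ``main obstacle'' you flag at the end does not actually arise: by definition $B$ is supported on primes at which $X/k$ is smooth, so $\bar{B}$ has multiplicity $0$ along every component of $\bar{C}$, and $\omega^{[1]}_{(\bar{X}/k,\bar{C}+\bar{B})}$ is invertible at the generic points of $\bar{C}$, which is all that the multiplicativity of the residue requires.
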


\begin{proof}
a) Similar to normal case, using moreover the fact that residues commute with multiplication of 
pluri-differential forms.

b) It follows from Lemma~\ref{lg}.
\end{proof}

We may restate property b) as saying that the normalization $(\bar{X}/k,\bar{C}+\bar{B} )\to (X/k,B)$ is log crepant.
Note that $\bar{X}$ is normal, but possibly disconnected. 

\begin{defn}
A {\em weakly normal log pair} $(X/k,B)$ consists of an algebraic variety $X/k$,
weakly normal and $S_2$, the (formal) closure $B$ of a $\Q$-Weil divisor on
the smooth locus of $X/k$, subject to the following property: there exists an integer $r\ge 1$
such that $rB$ has integer coefficients and the $\cO_X$-module $\omega^{[r]}_{(X/k,B)}$ is invertible.

If $B$ is effective, we call $(X/k,B)$ a {\em weakly normal log variety}.
\end{defn}

If $X$ is normal, these notions coincide with {\em log pairs} and {\em log varieties}.

Let $D$ be a $\Q$-Cartier divisor on $X$ supported by primes at which $X/k$ is smooth.
If $(X,B)$ is a weakly normal log pair, so is $(X,B+D)$.


\subsection{Weakly log canonical singularities, lc centers}

Suppose $\Char(k)=0$, or log resolutions exist (e.g. in the toric case). 
Note that any desingularization of $X$ factors through the normalization of $X$.
A {\em log resolution} $\mu\colon X'\to (X,B)$ is a composition $\mu=\pi\circ \bar{\mu}$, where
$\bar{\mu}\colon X'\to (\bar{X}/k,\bar{C}+\bar{B} )$ is a log resolution. 

We say that $(X/k,B)$ has {\em weakly log canonical (wlc) singularities} if $(\bar{X}/k,\bar{C}+\bar{B} )$
has log canonical singularities. The image $(X/k,B)_{-\infty}=\pi((\bar{X}/k,\bar{C}+\bar{B} )_{-\infty})$ 
is called the {\em non-wlc locus} of $(X/k,B)$. It is the complement of the largest open subset of $X$
where $(X/k,B)$ has weakly log canonical singularities. An {\em lc center} of $(X/k,B)$ is defined
as the $\pi$-image of an lc center of $(\bar{X}/k,\bar{C}+\bar{B})$, which is not contained in $(X/k,B)_{-\infty}$.
For example, the irreducible components of $X$ are lc centers. From the normal case, it follows 
that $(X/k,B)$ has only finitely many lc centers.

\begin{rem} 
If $ (\bar{X}/k,\bar{C}+\bar{B} )_{-\infty} = \pi^{-1}((X/k,B)_{-\infty})$, 
then $\pi$ maps lc centers onto lc centers.
\end{rem}


\subsection{Residues in codimension one lc centers, different}

Let $(X/k,B)$ be a weakly normal log pair. Suppose $X$ is not normal. 
Let $C$ be the non-normal locus of $X$, and $j\colon C^n\to C$ the normalization. 
We obtain a commutative diagram
\[ 
\xymatrix{
  \bar{X} \ar[d]_\pi & \bar{C} \ar@{_{(}->}[l]  \ar[d]_\pi &   \bar{C}^n  \ar[l]_i  \ar[d]^g    \\
    X       & C  \ar@{_{(}->}[l]  &   C^n  \ar[l]_j  
} \]
Pick $l\in \Z$ such that $lB$ has integer coefficients and $\omega^{[l]}_{(X/k,B)}$ is invertible.
We will naturally define a structure of log pair $(C^n/k,B_{C^n})$ and isomorphisms
$$
\Res^{[l]} \colon \omega^{[l]}_{(X/k,B)}|_{C^n}\isoto \omega^{[l]}_{(C^n/k,B_{C^n})}.
$$
Indeed, suppose moreover that $\cO_X\isoto \omega^{[l]}_{(X/k,B)}$. 
Let $\omega$ be the corresponding global generator. We have $(\pi^*\omega)+l(\bar{C}+\bar{B})=0$, and 
$\Res^{[l]}_{\bar{C}^n} \pi^*\omega =g^*\eta$ for some $\eta\in (\wedge^{d-1}\Omega^1_{Q(C)/k})^{\otimes l}$. 
It follows that $\eta$ is non-zero on each component of $C$. 

Note that $\eta=\eta(\omega)$ is uniquely determined by $\omega$.
If $\omega'$ is another global generator, it follows that $\omega'=f\omega$ for a global unit
$f\in \Gamma(X,\cO_X^\times)$. Therefore $\eta(\omega')=(f|_C)\cdot \eta(\omega)$ and 
$f|_C$ is a global unit on $C$. Therefore the $\Q$-Weil divisor on $C^n$
$$
B_{C^n}=-\frac{1}{l}(\eta)
$$
does not depend on the choice of a generator $\omega$. It follows that the definition of $B_{C^n}$ 
makes sense globally if $\omega^{[l]}_{(X/k,B)}$ is just locally free, since we can patch local
trivializations. The definition does not depend on the choice of $l$ either. 

Denote by $i'\colon \bar{C}^n\to \bar{X}$ and $j'\colon C^n\to X$ the induced morphisms. Let $B_{\bar{C}^n}$
be the different of $(\bar{X},\bar{C}+\bar{B})$ on (each connected component of) $\bar{C}^n$.
We have isomorphisms
$$
\pi^*\omega^{[l]}_{(X/k,B)} \isoto \omega^{[l]}_{(\bar{X}/k,\bar{C}+\bar{B})},
\Res^{[l]} \colon {i'}^*\omega^{[l]}_{(\bar{X}/k,\bar{C}+\bar{B} )} \isoto \omega^{[l]}_{(\bar{C}^n/k,B_{\bar{C}^n})},
g^*\omega^{[l]}_{(C^n/k,B_{C^n})}\isoto \omega^{[l]}_{(\bar{C}^n/k,B_{\bar{C}^n})}.
$$
In particular, we obtain an isomorphism
$
 {j'}^*\omega^{[l]}_{(X/k,B)}\isoto \omega^{[l]}_{(C^n/k,B_{C^n})}.
$
We may say that in the following commutative diagram, all maps are log crepant:
\[ 
\xymatrix{
  (\bar{X},\bar{C}+\bar{B}) \ar[d]_\pi &   (\bar{C}^n,B_{\bar{C}^n})  \ar[l]_{i'}  \ar[d]^g    \\
   (X,B)        &   (C^n,B_{C^n})  \ar[l]_{j'}  
} \]

\begin{lem}[Inversion of adjunction] Suppose $\Char(k)=0$ and $B\ge 0$.
Then $(X,B)$ has wlc singularities near $C$ if and only if
$(C^n,B_{C^n})$ has lc singularities.
\end{lem}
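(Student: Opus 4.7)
The plan is to chain together three equivalences that together reduce the claim to known facts in the normal setting.

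First, by the very definition of wlc singularities and the finiteness of $\pi\colon\bar X\to X$ (which gives $\pi^{-1}(C)=\bar C$), the assertion that $(X,B)$ has wlc singularities in some open neighborhood of $C$ is equivalent to the assertion that $(\bar X,\bar C+\bar B)$ has lc singularities in some open neighborhood of $\bar C$. Indeed, if $U\supset C$ is a wlc open neighborhood, then $\pi^{-1}(U)$ is an open lc neighborhood of $\bar C$; conversely, if $V\supset\bar C$ is open and lc, then $U:=X\setminus\pi(\bar X\setminus V)$ is open by finiteness (hence properness) of $\pi$ and contains $C$.

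Second, since $B\ge 0$ forces $\bar C+\bar B\ge 0$ with $\bar C$ reduced, the classical inversion of adjunction on log canonicity in characteristic zero (Kawakita) yields that $(\bar X,\bar C+\bar B)$ is lc in a neighborhood of $\bar C$ if and only if $(\bar C^n,B_{\bar C^n})$ is lc. Here $B_{\bar C^n}\ge 0$ by the effectivity property of the different recorded in Section~1.

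Third, $g\colon\bar C^n\to C^n$ is a finite surjective morphism between normal varieties (surjectivity follows because $\pi\colon\bar C\to C$ is surjective on irreducible components, as was noted at the start of Section~3), and the isomorphism
$$
g^*\omega^{[l]}_{(C^n/k,B_{C^n})}\isoto\omega^{[l]}_{(\bar C^n/k,B_{\bar C^n})}
$$
established just above the lemma translates into the log crepancy identity $g^*(K_{C^n}+B_{C^n})=K_{\bar C^n}+B_{\bar C^n}$ as $\Q$-Cartier divisors. By the standard fact that log canonicity is both preserved and reflected under finite log crepant surjective morphisms between normal log pairs (Koll\'ar--Mori, Proposition~5.20), $(\bar C^n,B_{\bar C^n})$ is lc if and only if $(C^n,B_{C^n})$ is lc.

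Concatenating the three equivalences gives the lemma. The main obstacle I anticipate is step two, which rests essentially on Kawakita's inversion of adjunction for log canonicity; step three is formal once the log crepancy relation is read off from the pluricanonical sheaf isomorphism, and step one is an immediate consequence of the definition together with the finiteness of $\pi$.
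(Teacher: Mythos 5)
Your proof is correct and follows exactly the same three-step route as the paper: reduce to the normalization via $\bar C=\pi^{-1}(C)$, apply Kawakita's inversion of adjunction on $(\bar X,\bar C+\bar B)$, and descend along the finite log crepant morphism $g\colon\bar C^n\to C^n$. The paper's proof is a terser version of precisely this argument, so no further comment is needed.
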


\begin{proof} We have $\bar{C}=\pi^{-1}(C)$. Therefore $(X,B)$ has wlc singularities near $C$ if 
and only if $(\bar{X},\bar{C}+\bar{B})$ has lc singularities near $\bar{C}$. By~\cite{Kwk07}, this 
holds if and only if $(\bar{C}^n,B_{\bar{C}^n})$ has lc singularities. Since $g$ is a finite log crepant 
morphism, the latter holds if and only if $(C^n,B_{C^n})$ has lc singularities.
\end{proof}

If $B$ is effective, then $B_{C^n}$ is effective.

Let $E$ be an lc center of $(X/k,B)$ of codimension one. Let $E^n\to E$ be the normalization.
Then there exists a log pair structure $(E^n,B_{E^n})$ on the normalization of $E$, together
with residue isomorphisms 
$
\Res_E^{[r]}\colon \omega^{[r]}_{(X/k,B)}|_{E^n}\isoto \omega^{[r]}_{(E^n,B_{E^n})},
$
for every $r\in \Z$ such that $rB$ has integer coefficients and $\omega^{[r]}_{(X/k,B)}$ is 
invertible.
Indeed, if $X$ is normal at $E$, we have the usual codimension one residue. Else, 
$E$ is an irreducible component of $C$ and $E^n$ is an irreducible component of $C^n$,
and the residue isomorphism and different was constructed above.


\subsection{Semi-log canonical singularities}

Suppose $\Char(k)=0$.
We show that semi-log canonical pairs are exactly the weakly normal log varieties which have
wlc singularities and are Gorenstein in codimension one.
Recall~\cite[Definition-Lemma 5.10]{Kbook} that a {\em semi-log canonical pair} $(X/k,B)$ consists of an algebraic variety
$X/k$ which is $S_2$ and has at most nodal singularities in codimension one, and 
an effective $\Q$-Weil divisor $B$ on $X$, supported by nonsingular codimension one points of $X$, such that 
the following properties hold:
\begin{itemize}
\item[1)] There exists $r\ge 1$ such that $rB$ has integer coefficients and the $\cO_X$-module 
$\omega^{[r]}_X(rB)$ is invertible. This sheaf is constructed as follows: there exists an open 
subset $w\colon U\subseteq X$ such that $\codim(X\setminus U\subset X)\ge 2$, $U$ has Gorenstein 
singularities and $rB|_U$ is Cartier. Let $\omega_U$ be a dualizing sheaf on $U$, which is invertible. 
Then $\omega^{[r]}_X(rB)=w_*(\omega_U^{\otimes r}\otimes \cO_U(rB|_U))$.

If we consider the normalization of $X$ and the conductor subschemes
\[ 
\xymatrix{
  \bar{X} \ar[d]_\pi & \bar{C} \ar@{_{(}->}[l]  \ar[d]^\pi    \\
    X       & C  \ar@{_{(}->}[l]  
} \]
we obtain $\pi^*\omega^{[r]}_X(rB)\isoto \omega^{[r]}_{\bar{X}}(r\bar{C}+r\bar{B})$, where 
$\bar{B}=\pi^*B$ is the pullback as a $\Q$-Weil divisor.
\item[2)] $(\bar{X},\bar{C}+\bar{B})$ is a log variety (possibly disconnected) with at most log canonical
singularities. 
\end{itemize}

On the normal variety $\bar{X}$, we have $\omega^{[r]}_{\bar{X}}(r\bar{C}+r\bar{B})= \omega^{[r]}_{(\bar{X}/k,\bar{C}+\bar{B})}$.
The normalizations of $\bar{C}$ and $C$ induce a commutative diagram
\[ 
\xymatrix{
  \bar{X} \ar[d]_\pi & \bar{C} \ar@{_{(}->}[l]  \ar[d] &   \bar{C}^n  \ar[l]  \ar[d]^g    \\
    X       & C  \ar@{_{(}->}[l]  &   C^n  \ar[l]
} \]
The assumption that the non-normal codimension one singularities of $X$ are nodal means that 
$g$ is $2\colon 1$. Equivalently, $g$ is the quotient of $\bar{C}^n$ by an involution 
$\tau\colon \bar{C}^n\to \bar{C}^n$. If we further assume $2\mid r$, we obtain by~\cite[Proposition 5.8]{Kbook}
that $\omega^{[r]}_X(rB)$ consists of the section $\omega$ of $\omega^{[r]}_{\bar{X}}(r\bar{C}+r\bar{B})$
whose residue $\omega'$ on $\bar{C}^n$ is $\tau$-invariant, which is equivalent to $\omega'$ being pulled back
from $C^n$. We obtain
$$
\omega^{[r]}_X(rB)= \omega^{[r]}_{(X/k,B)} \ (2\mid r).
$$
Since nodal singularities are weakly normal and Gorenstein, we conclude that $(X/k,B)$ is a weakly normal log variety 
with wlc singularities, which is Gorenstein in codimension one. Moreover, 
$\omega^{[r]}_X(rB)= \omega^{[r]}_{(X/k,B)}$ if $2\mid r$.

Conversely, let $(X/k,B)$ be a weakly normal log variety with wlc singularities, which is Gorenstein 
in codimension one. Among weakly normal points of codimension one, only smooth and nodal ones are Gorenstein.
It follows that $(X/k,B)$ is a semi-log canonical pair, and 
$
\omega^{[n]}_X(nB)= \omega^{[n]}_{(X/k,B)} 
$
for every $n\in 2\Z$.

Note that for a weakly normal log variety with wlc singularities $(X/k,B)$, the following are equivalent:
\begin{itemize}
\item $(X/k,B)$ is a semi-log canonical pair.
\item $X$ is Gorenstein in codimension one.
\item If $X$ is not normal, the induced morphism $g\colon \bar{C}^n \to C^n$ is $2:1$.
\end{itemize}


\section{Toric weakly normal log pairs}



\subsection{Irreducible case}


Let $X=\Spec k[S]$ be weakly normal and $S_2$. It is an equivariant embedding of
the torus $T=\Spec k[M]$, where $M=S-S$ (see~\cite[Section 2]{TFR1}). 
Let $\pi\colon \bar{X}\to X$ be the 
normalization, with induced conductor subschemes $\pi\colon \bar{C}\to C$.
Let $\{\tau_i\}_i$ be the codimension one faces of $\sigma_S$. Then 
$E_i=\Spec k[S_{\tau_i}]$ are the invariant codimension one subvarieties of $X$,
and $\bar{E_i}=\Spec k[M\cap \tau_i]$ are the invariant codimension one 
subvarieties of $\bar{X}$. Each $\bar{E}_i$ is normal, and the following diagram is cartesian: 
\[ 
\xymatrix{
  \bar{X} \ar[d]_\pi & \bar{E}_i \ar@{_{(}->}[l]  \ar[d]^{\pi_i}    \\
    X       & E_i  \ar@{_{(}->}[l]  
} \]
Each morphism $\pi_i \colon \bar{E}_i\to E_i$ is finite surjective of degree $d_i$,
the incidence number of $E_i\subset X$.

Let $X_{\sigma(\Delta)}$ be the core of $X$. We have $\sigma=\sigma_S$ if $X$ is normal, and 
$\sigma(\Delta)=\cap_{d_i>1}\tau_i$ otherwise. Denote $\Sigma_{\bar{X}}=\bar{X}\setminus T=\sum_i \bar{E}_i$. 

Let $B=\sum_i b_i E_i$ be a $\Q$-Weil divisor on $X$ supported by prime
divisors in which $X/k$ is smooth. Note that $X/k$ is smooth at $E_i$ if and only if 
$E_i\not\subset C$, if and only if $d_i=1$.

\begin{lem}\label{incr} Let $n\in \Z$. The following properties are equivalent:
\begin{itemize}
\item[a)] $\omega^{[n]}_{(X/k,B)}$ is invertible at some point $x$, which belongs to the closed orbit of $X$.
\item[b)] $\cO_X\simeq \omega^{[n]}_{(X/k,B)}$.
\item[c)] There exists $m\in S_{\sigma(\Delta)}-S_{\sigma(\Delta)}$ such that 
$(\chi^m)+\lfloor n(-\Sigma_{\bar{X}}+\bar{C}+\bar{B}) \rfloor=0$ on $\bar{X}$.
\end{itemize}
\end{lem}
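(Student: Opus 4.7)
The plan is to follow the pattern of the normal-case analogue, Lemma~\ref{NI}, with Lemma~\ref{lg} playing the role of the simpler trivialization criterion. The implication $(b)\Rightarrow(a)$ is immediate, so the substance lies in $(c)\Rightarrow(b)$ and $(a)\Rightarrow(c)$.

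For $(c)\Rightarrow(b)$, I would set $\omega=\chi^m\omega_\cB^{\otimes n}$ and verify both conditions of Lemma~\ref{lg} with $U=X$. The divisor condition $(\pi^{*}\omega)+\lfloor n(\bar{C}+\bar{B})\rfloor=0$ on $\bar{X}$ is just (c), using $(\omega_\cB)=-\Sigma_{\bar{X}}$ on the normal toric $\bar{X}$ and the fact that $n\Sigma_{\bar{X}}$ is integral and hence moves freely across the floor. The residue condition needs to be checked only at the invariant primes $\bar{E}_i\subset\bar{C}$, i.e.~those with $d_i>1$. In the irreducible case the formula established inside the proof of Proposition~\ref{coredef} reduces to $S_{\sigma(\Delta)}-S_{\sigma(\Delta)}=\bigcap_{d_i>1}(S_{\tau_i}-S_{\tau_i})$, so the hypothesis on $m$ gives $m\in S_{\tau_i}-S_{\tau_i}$ for each such $i$, and hence $\chi^m|_{\bar{E}_i}=\pi_i^{*}(\chi^m|_{E_i})$ on the dense torus. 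For the pluri-differential factor, fixing a basis $\cB_i'$ of $S_{\tau_i}-S_{\tau_i}$ gives $\pi_i^{*}\omega_{\cB_i'}=\pm d_i\cdot\omega_{\cB_i}$, so $\omega_{\cB_i}^{\otimes n}$ is, up to a non-zero scalar, itself a pullback from $E_i$. Multiplying the two factors, $\Res_{\bar{E}_i}(\pi^{*}\omega)$ lies in the image of $\pi_i^{*}$, and Lemma~\ref{lg} yields (b).

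For $(a)\Rightarrow(c)$, since $B$ is $T$-invariant, the sheaf $\omega^{[n]}_{(X/k,B)}$ carries a natural $T$-equivariant structure: every rational $n$-pluri-form is $f\cdot\omega_\cB^{\otimes n}$ with $f\in Q(T)$, and the two conditions defining local sections of $\omega^{[n]}_{(X/k,B)}$ in terms of $\bar{X}$ and of residues along $\bar{C}$ are $T$-stable. The invertibility locus is thus open and $T$-invariant; containing a point $x$ of the closed orbit of $X=\Spec k[S]$, it must equal all of $X$, because the closed orbit meets the closure of every $T$-orbit. Hence $\omega^{[n]}_{(X/k,B)}$ is globally invertible. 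The $M$-grading on $\Gamma(X,\omega^{[n]}_{(X/k,B)})$ induced by the $T$-action has at most one-dimensional weight spaces, each spanned by an individual $\chi^{m}\omega_\cB^{\otimes n}$; for such a graded module to be a free rank-one $k[S]$-module, its set of $M$-weights must be of the form $m_0+S$, which produces the $T$-semi-invariant generator $\omega=\chi^{m_0}\omega_\cB^{\otimes n}$. Applying Lemma~\ref{lg} to this $\omega$ yields the divisor equation of (c), while the residue condition forces $\chi^{m_0}|_{\bar{E}_i}$ to be a pullback from $E_i$ for every $d_i>1$, giving $m_0\in \bigcap_{d_i>1}(S_{\tau_i}-S_{\tau_i})=S_{\sigma(\Delta)}-S_{\sigma(\Delta)}$.

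The main obstacle I anticipate is producing the $T$-semi-invariant global generator in $(a)\Rightarrow(c)$: one must know that the globally invertible, $T$-equivariant coherent sheaf $\omega^{[n]}_{(X/k,B)}$ on the (possibly non-normal) affine toric $X$ admits a homogeneous generator. The argument via the $M$-graded structure of global sections is the natural route, made possible here because $\omega^{[n]}_{(X/k,B)}$ sits inside the one-dimensional $Q(X)$-vector space of rational pluri-forms, keeping each $T$-weight space at most one-dimensional. Once this semi-invariant form is in hand, the divisor and residue checks coincide with the bookkeeping already carried out for $(c)\Rightarrow(b)$.
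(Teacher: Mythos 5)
Your proposal is correct and follows essentially the same route as the paper: produce a $T$-semi-invariant generator $\chi^m\omega_\cB^{\otimes n}$ (the paper obtains it from complete reducibility of the $T$-action on global sections plus global generation on the affine $X$, rather than from your graded-module argument, but to the same effect), reduce the divisor condition to the normal case on $\bar X$ via Lemma~\ref{NI}, and use the residue condition along $\bar C$ to force $m\in\bigcap_{d_i>1}(S_{\tau_i}-S_{\tau_i})=S_{\sigma(\Delta)}-S_{\sigma(\Delta)}$ by Proposition~\ref{coredef}. The only point worth making explicit is that your ``up to a non-zero scalar'' in $\pi_i^*\omega_i=\pm d_i\cdot\bar\omega_i$ needs $\Char(k)\nmid d_i$, which is exactly what weak normality of $X/k$ supplies.
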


\begin{proof} $a) \Longrightarrow c)$ The torus $T$ acts on $\omega^{[n]}_{(X/k,B)}$, hence on 
$\Gamma(X,\omega^{[n]}_{(X/k,B)})$. By the complete reducibility theorem, the space of global
sections decomposes into one-dimensional invariant subspaces. Therefore the space of global sections is generated
by semi-invariant pluri-differential forms. Since $X$ is affine, $\omega^{[n]}_{(X/k,B)}$ is generated 
by its global sections. Suppose $\omega^{[n]}_{(X/k,B)}$ is invertible at $x$. Then there exists a semi-invariant
global section $\omega\in \Gamma(X,\omega^{[n]}_{(X/k,B)})$ which induces a local trivialization near $x$.

Let $\bar{x}$ be a point of $\bar{X}$ lying over $x$. Then $\pi^*\omega$ is a local trivialization for 
$\omega^{[n]}_{(\bar{X}/k,\bar{C}+\bar{B})}$ near the point $\bar{x}$, which belongs to the closed orbit 
of $\bar{X}$. By Lemma~\ref{NI}, there exists $m\in M$ such that 
$(\chi^m)+\lfloor n(-\Sigma_{\bar{X}}+\bar{C}+\bar{B}) \rfloor=0$ on $\bar{X}$. Then 
$\chi^m\omega_M^{\otimes n}$ becomes a nowhere zero global section of 
$\omega^{[n]}_{(\bar{X}/k,\bar{C}+\bar{B})}$, where $\omega_M$ is the volume form on $T$ induced by an
orientation of $M$.

Now $\pi^*\omega=f \cdot \chi^m\omega_\cB^{\otimes n}$, for some $f\in \Gamma(\bar{X},\cO_{\bar{X}})$ which 
is a unit at $\bar{x}$. Since $\omega$ is semi-invariant, so is $f$. Therefore $f=c\chi^u$ for some $c\in k^\times$
and $u\in M$. Since $f$ is a unit at $\bar{x}$, it is a global unit, that is $u\in \bar{S}\cap (-\bar{S})$. 
Replacing $\omega$ by $\omega/c$ and $m$ by $m+u$, we may suppose 
$$
\pi^*\omega=\chi^m\omega_M^{\otimes n}.
$$
Let $E_i\subseteq C$ be an irreducible component. 
The identity $(\chi^m)+\lfloor n(-\Sigma_{\bar{X}}+\bar{C}+\bar{B}) \rfloor=0$
at $\bar{E}_i$ is equivalent to $m\in M\cap \tau_i-M\cap \tau_i$. We compute $\chi^m|_{\bar{E}_i}=\chi^m$ and
$$
\Res^{[n]}_{\bar{E}_i} \pi^*\omega= \chi^m \cdot (\Res_{\bar{E}_i} \omega_M)^{\otimes n}.
$$
Let $\omega_i$ be a volume form on the torus inside $E_i$ induced by an orientation of $S_{\tau_i}-S_{\tau_i}$,
let $\bar{\omega}_i$ be a volume form on the torus inside $\bar{E}_i$ induced by an orientation
of $M\cap \tau_i-M\cap \tau_i$. Then $\pi^*\omega_i=(\pm d_i)\cdot \bar{\omega}_i$ and 
$\Res_{\bar{E}_i} \omega_M=(\pm 1) \cdot \bar{\omega}_i$. Since $X/k$ is weakly normal, $\Char(k) \nmid d_i$. Thus
$
\Res_{\bar{E}_i} \omega_M=\pi_i^*( (\epsilon_i d_i)^{-1}\omega_i)
$
for some $\epsilon_i=\pm 1$. Therefore $\Res_{\bar{E}_i} \pi^*\omega$ is pulled back from the generic point of 
$E_i$ if and only if so is $\chi^m\in k(\bar{E}_i)$, which is equivalent to $m\in S_{\tau_i}-S_{\tau_i}$.
In particular, $m$ belongs to $M\cap \cap_{d_i>1}(S_{\tau_i}-S_{\tau_i})$, which is $S_{\sigma(\Delta)}-S_{\sigma(\Delta)}$
by Proposition~\ref{coredef}.

$c) \Longrightarrow b)$ $\chi^m\omega_M^{\otimes n}$ becomes a nowhere zero global
section $\omega\in \Gamma(X,\omega^{[n]}_{(X/k,B)})$, with
$$
\Res^{[n]}_{\bar{E}_i} \pi^*\omega=\pi_i^*( (\epsilon_id_i)^{-n}\chi^m\omega_i^{\otimes n}) .
$$

$b) \Longrightarrow a)$ is clear.
\end{proof}

\begin{prop}\label{IC} 
$(X/k,B)$ is a weakly normal log pair if and only if $(\bar{X}/k,\bar{C}+\bar{B})$ is a log pair.
Moreover:
\begin{itemize} 
\item $B$ is effective if and only if $\bar{C}+\bar{B}$ is effective.
\item $(X/k,B)$ has wlc singularities if and only if $(\bar{X}/k,\bar{C}+\bar{B})$ has lc singularities, if and only if 
the coefficients of $B$ are at most $1$.
\item $(X/k,B)$ has slc singularities if and only if $d_i\mid 2$ for all $i$.
\end{itemize}
\end{prop}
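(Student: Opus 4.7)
My plan is to reduce each claim to the already-established normal toric case via the normalization $\pi\colon \bar X\to X$, using Lemma~\ref{incr} as the bridge. The forward direction of the main equivalence is immediate: the corollary following Lemma~\ref{lg} identifies $\pi^*\omega^{[r]}_{(X/k,B)}\isoto \omega^{[r]}_{(\bar X/k,\bar C+\bar B)}$, and the pullback of an invertible sheaf is invertible, so $(\bar X,\bar C+\bar B)$ is a log pair whenever $(X,B)$ is.

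For the converse, I would start from the log discrepancy function of $(\bar X/k,\bar C+\bar B)$ produced by the normal toric Proposition: there exists $\psi\in M_\Q$ with $\langle e_i,\psi\rangle=1-\mult_{\bar E_i}(\bar C+\bar B)$ for all $i$, and some $r\ge 1$ with $m_0:=r\psi\in M$ and $rB$ integral. Because $B$ is supported on primes where $d_i=1$, at each $\bar E_i$ with $d_i>1$ the coefficient of $\bar C+\bar B$ is $1$, so $\langle e_i,m_0\rangle=0$ and $m_0\in M\cap\tau_i-M\cap\tau_i$. The main obstacle is that Lemma~\ref{incr} requires $m\in S_{\sigma(\Delta)}-S_{\sigma(\Delta)}$, a strictly finer sublattice (the index of $S_{\tau_i}-S_{\tau_i}$ in $M\cap\tau_i-M\cap\tau_i$ is exactly $d_i$). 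I would overcome this by replacing $r$ with $Nr$ for $N=\lcm\{d_i:d_i>1\}$: for each such $i$ the quotient has exponent dividing $d_i\mid N$, so $Nm_0\in S_{\tau_i}-S_{\tau_i}$, and the formula for the core lattice in Proposition~\ref{coredef}, specialised to the irreducible case where $S_F-S_F=M$, yields $Nm_0\in \bigcap_{d_i>1}(S_{\tau_i}-S_{\tau_i})=S_{\sigma(\Delta)}-S_{\sigma(\Delta)}$. Since $(\chi^{Nm_0})+Nr(-\Sigma_{\bar X}+\bar C+\bar B)=0$ on $\bar X$, Lemma~\ref{incr} then gives $\omega^{[Nr]}_{(X/k,B)}\simeq \cO_X$, so $(X,B)$ is a weakly normal log pair.

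The three subsidiary claims should follow by bookkeeping of coefficients. Because $B=\sum_{d_i=1}b_iE_i$ by the support assumption, its pullback $\bar B=\sum_{d_i=1}b_i\bar E_i$ is disjoint from $\bar C$, and $\bar C+\bar B\ge 0$ if and only if every $b_i\ge 0$. By definition $(X,B)$ is wlc iff $(\bar X,\bar C+\bar B)$ is lc, which by the normal toric criterion holds iff all coefficients of $\bar C+\bar B$ are $\le 1$; since these coefficients are $1$ at $\bar E_i$ with $d_i>1$ and $b_i$ at $\bar E_i$ with $d_i=1$, the condition reduces to $b_i\le 1$ for all $i$. Finally, invoking the characterisation from Section 3 that a wlc weakly normal log variety is slc iff $g\colon\bar C^n\to C^n$ is $2:1$, and observing that in the irreducible case $g$ decomposes as $\bigsqcup_{d_i>1}(\bar E_i\to E_i^n)$ with $i$-th component of degree $d_i$, I conclude that slc is equivalent to $d_i=2$ whenever $d_i>1$, i.e., $d_i\mid 2$ for all $i$.
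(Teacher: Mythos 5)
Your proof is correct and follows essentially the same route as the paper: the forward direction via $\pi^*\omega^{[r]}_{(X/k,B)}\isoto\omega^{[r]}_{(\bar X/k,\bar C+\bar B)}$, and the converse by multiplying the lattice point $r\psi$ by the lcm of the incidence numbers so that it lands in $S_{\sigma(\Delta)}-S_{\sigma(\Delta)}$ and Lemma~\ref{incr} applies. You additionally spell out the effectivity and slc bullets (via the degree of $g\colon\bar C^n\to C^n$), which the paper's proof leaves implicit; these details are accurate.
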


\begin{proof} Denote $d=\lcm_i d_i$. Pick $r\ge 1$ such that $rB$ has integer coefficients.
If $\omega^{[r]}_{(X/k,B)}$ is invertible, so is $\pi^*\omega^{[r]}_{(X/k,B)}=\omega^{[r]}_{(\bar{X}/k,\bar{C}+\bar{B})}$.
Conversely, the sheaf $\omega^{[r]}_{(\bar{X}/k,\bar{C}+\bar{B})}$ is invertible if and only if there exists 
$m\in M$ such that $(\chi^m)+r(-\Sigma_{\bar{X}}+\bar{C}+\bar{B})=0$ on $\bar{X}$. 
Let $E_i\subset \bar{C}$. Since $m\in \bar{S}_{\tau_i}-\bar{S}_{\tau_i}$, 
$dm\in S_{\tau_i}- S_{\tau_i}$. Since $(\chi^{dm})+dr(-\Sigma_{\bar{X}}+\bar{C}+\bar{B})=0$ on $\bar{X}$,
$\omega^{[dr]}_{(X/k,B)}$ is invertible by Lemma~\ref{incr}.

Note that $\psi=\frac{1}{r}m\in (S_{\sigma(\Delta)}-S_{\sigma(\Delta)})_\Q$ is a log discrepancy function of the toric 
log pair $(\bar{X}/k,\bar{C}+\bar{B})$.
We deduce that $(X/k,B)$ has wlc singularities if and only if $(\bar{X},\bar{C}+\bar{B})$ has lc singularities,
if and only if the coefficients of $B$ are at most $1$, if and only if $\psi\in \sigma_S$.
\end{proof}

A log discrepancy function $\psi$ is unique modulo the vector space $\sigma_S\cap (-\sigma_S)$, the largest 
vector space contained in $\sigma_S$, or equivalently, the smallest face of $\sigma_S$.
We actually have $\psi\in \sigma(\Delta)$.

\begin{lem} Suppose $(X/k,B)$ is a weakly normal log pair, with log discrepancy function $\psi$.
\begin{itemize} 
\item[1)] $(X/k,B)_{-\infty}=\cup_{b_i>1}E_i$ and $(\bar{X}/k,\bar{C}+\bar{B})_{-\infty}=\cup_{b_i>1}\bar{E}_i=\pi^{-1}((X,B)_{-\infty})$.
\item[2)] The lc centers of $(X/k,B)$ are $X_\sigma$, where $\psi\in \sigma\prec \sigma_S$ and 
$\sigma\not\subset \tau_i$ if $b_i>1$.
\item[3)] The correspondence $Z\mapsto \pi^{-1}(Z)$ is one to one between lc 
centers of $(X/k,B)$ and lc centers of $(\bar{X}/k,\bar{C}+\bar{B})$.
\end{itemize}
\end{lem}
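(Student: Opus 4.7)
The plan is to reduce each statement to the normal toric case via the normalization $\pi\colon \bar{X}\to X$, using that by Proposition~\ref{IC}, $(\bar{X}/k,\bar{C}+\bar{B})$ is a normal toric log pair with the same log discrepancy function $\psi$ as $(X/k,B)$.

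For (1), I would first record the coefficients of $\bar{C}+\bar{B}$ along each $\bar{E}_i$. Since $B$ is supported on the smooth locus of $X$, the assumption $b_i\ne 0$ forces $d_i=1$, equivalently $E_i\not\subset C$ and $\bar{E}_i\not\subset \bar{C}$. Consequently the coefficient of $\bar{E}_i$ in $\bar{C}+\bar{B}$ equals $b_i$ when $d_i=1$ and equals $1$ when $d_i>1$; in particular the coefficients strictly greater than $1$ occur exactly along those $\bar{E}_i$ with $b_i>1$. Applying the normal toric description of the non-lc locus to $(\bar{X}/k,\bar{C}+\bar{B})$ then yields $(\bar{X}/k,\bar{C}+\bar{B})_{-\infty}=\cup_{b_i>1}\bar{E}_i$. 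Since the normalization diagram is cartesian, $\pi^{-1}(E_i)=\bar{E}_i$, so this equals $\pi^{-1}(\cup_{b_i>1}E_i)$. By definition $(X/k,B)_{-\infty}=\pi((\bar{X}/k,\bar{C}+\bar{B})_{-\infty})$, which then gives $\cup_{b_i>1}E_i$, completing (1).

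For (2), I would invoke the normal toric lc-center classification applied to $(\bar{X}/k,\bar{C}+\bar{B})$: its lc centers are the invariant subvarieties $\bar{X}_\sigma$ with $\psi\in \sigma\prec \sigma_S$ and $\sigma\not\subset \tau_i$ whenever $\bar{E}_i$ has coefficient $>1$ in $\bar{C}+\bar{B}$, which by the coefficient analysis above is exactly the condition $b_i>1$. By definition, the lc centers of $(X/k,B)$ are the $\pi$-images of lc centers of $(\bar{X}/k,\bar{C}+\bar{B})$ not contained in $(X/k,B)_{-\infty}$; but the exclusion $\sigma\not\subset \tau_i$ for $b_i>1$ together with part (1) is exactly the condition that $\bar{X}_\sigma \not\subset \pi^{-1}((X/k,B)_{-\infty})$. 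Since $\pi(\bar{X}_\sigma)=X_\sigma$, the stated description follows. Finally for (3), the cartesian property gives $\pi^{-1}(X_\sigma)=\bar{X}_\sigma$ for every $\sigma\prec \sigma_S$, so the correspondence $X_\sigma\mapsto \bar{X}_\sigma$ is an obvious bijection between the lc centers of $(X/k,B)$ described in (2) and those of $(\bar{X}/k,\bar{C}+\bar{B})$.

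No step really presents an obstacle here: the only technical point is the coefficient accounting that links the inequality $b_i>1$ to the coefficient of $\bar{E}_i$ in $\bar{C}+\bar{B}$ exceeding $1$, which is immediate from the hypothesis that $B$ is supported on the smooth codimension-one locus. Everything else is a bookkeeping exercise on top of the already-established normal toric results of Section~1 and Proposition~\ref{IC}.
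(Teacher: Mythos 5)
Your proof is correct. The paper states this lemma without proof (unlike its reducible-case analogue, which does get a proof along the same lines but needs $1$-connectedness), and your argument is exactly the intended one: the coefficient accounting showing $b_i\ne 0\Rightarrow d_i=1$, hence that $\bar C+\bar B$ has coefficient $>1$ along $\bar E_i$ precisely when $b_i>1$, followed by the normal toric descriptions of the non-lc locus and lc centers from Section 1 and the cartesian property $\pi^{-1}(X_\sigma)=\bar X_\sigma$.
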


Suppose $(X/k,B)$ is wlc, with log discrepancy function $\psi\in \sigma_S$.
The lc centers of $(X/k,B)$ are $X_\sigma$, where $\psi\in \sigma\prec \sigma_S$. For $\sigma=\sigma_S$,
we obtain the lc center $X$, for $\sigma\ne \sigma_S$ we obtain lc centers defined by toric valuations.
Any union of lc centers is weakly normal. The intersection of two lc centers is again an lc center. 
With respect to inclusion, there exists a unique minimal lc center, namely $X_{\sigma(\psi)}$ for 
$\sigma(\psi)=\cap_{\psi\in \sigma\prec \sigma_S}\sigma$ (the unique face of $\sigma_S$ which contains 
$\psi$ in its relative interior). 
Note that $X$ is the unique lc center of $(X/k,B)$ if and only if $X$ is normal and the coefficients of $B$
are strictly less than $1$.

\begin{lem}\label{mlcn}
Suppose $(X/k,B)$ is wlc. Then the minimal lc center of $(X/k,B)$ is normal. 
\end{lem}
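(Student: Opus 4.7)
The plan is to exhibit the minimal lc center $X_\tau$ (where $\tau=\sigma(\psi)$) as a torus-invariant closed subvariety of the core $X_{\sigma(\Delta)}$. Since $X_{\sigma(\Delta)}$ is normal by Proposition~\ref{coredef}, and torus-invariant closed subvarieties of a normal affine toric variety are themselves normal affine toric varieties, this yields the statement.

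The key step is to prove $\tau\prec \sigma(\Delta)$. From the proof of Proposition~\ref{IC}, the log discrepancy function satisfies $\psi\in(S_{\sigma(\Delta)}-S_{\sigma(\Delta)})_\Q$. Because $X$ is seminormal, $S_{\sigma(\Delta)}$ contains $\Lambda_{\sigma(\Delta)}\cap\relint\sigma(\Delta)$, which generates the cone $\sigma(\Delta)$; hence the $\Q$-span of $S_{\sigma(\Delta)}-S_{\sigma(\Delta)}$ coincides with the linear span of the face $\sigma(\Delta)\prec\sigma_S$. Combining this with the wlc hypothesis $\psi\in\sigma_S$, and using that a face of a polyhedral cone equals its intersection with its own linear span, we obtain
$$\psi\in\sigma_S\cap\Span_\Q\sigma(\Delta)=\sigma(\Delta).$$
Since $\tau=\sigma(\psi)$ is, by definition, the smallest face of $\sigma_S$ containing $\psi$, the inclusion $\psi\in\sigma(\Delta)$ forces $\tau\prec\sigma(\Delta)$.

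From here the conclusion is essentially formal. Because $\tau\subseteq\sigma(\Delta)$, we have $S_\tau=S\cap\tau=S_{\sigma(\Delta)}\cap\tau$, and the normality of $X_{\sigma(\Delta)}$ gives $S_{\sigma(\Delta)}=\Lambda_{\sigma(\Delta)}\cap\sigma(\Delta)$; hence $S_\tau=\Lambda_{\sigma(\Delta)}\cap\tau$, so $X_\tau$ is the normal affine toric variety associated to the lattice $\Lambda_{\sigma(\Delta)}$ and the face $\tau\prec\sigma(\Delta)$. The main obstacle I anticipate is the first step: carefully extracting from the proof of Proposition~\ref{IC} that $\psi$ is forced into the linear span of the core $\sigma(\Delta)$, rather than merely into $M_\Q$. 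Once this is in hand, the containment $\tau\prec\sigma(\Delta)$ and the normality of $X_\tau$ follow immediately from standard facts about faces of polyhedral cones and invariant subvarieties of normal toric varieties.
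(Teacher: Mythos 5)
Your proof is correct, and its overall shape matches the paper's: both reduce the statement to the containment $\sigma(\psi)\prec\sigma(\Delta)$ and then conclude by Proposition~\ref{coredef} together with the fact that every invariant closed irreducible subvariety of a normal affine toric variety is normal. The one step where you diverge is the justification of that containment. The paper argues at the level of lc centers: the core is the intersection of the codimension one lc centers $E_i\subset C$ (or all of $X$ if $X$ is normal), an intersection of lc centers is again an lc center, and the minimal lc center is contained in every lc center, in particular in the core. You instead argue via convex geometry on the log discrepancy function: Proposition~\ref{IC} places $\psi$ in $(S_{\sigma(\Delta)}-S_{\sigma(\Delta)})_\Q=\Span_\Q\sigma(\Delta)$, the wlc hypothesis places $\psi$ in $\sigma_S$, and since $\sigma(\Delta)$ is a face of $\sigma_S$ one has $\sigma_S\cap\Span_\Q\sigma(\Delta)=\sigma(\Delta)$, forcing $\psi\in\sigma(\Delta)$ and hence $\sigma(\psi)\prec\sigma(\Delta)$. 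Both routes are valid and of comparable length; yours makes the convex-geometric content explicit (and incidentally reproves that the core is an lc center), while the paper's leans on the already-established formalism that intersections of lc centers are lc centers. A minor remark: your appeal to seminormality to identify $\Span_\Q(S_{\sigma(\Delta)}-S_{\sigma(\Delta)})$ with $\Span_\Q\sigma(\Delta)$ is not needed, since $S\cap\sigma(\Delta)$ already generates the face $\sigma(\Delta)$ as a cone (any positive combination of elements of $S$ lying in a face of $\sigma_S$ must have all its constituents in that face).
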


\begin{proof} Let $X_{\sigma(\Delta)}$ be the core of $X$. It is an intersection of lc centers of 
$(X/k,B)$, hence an lc center itself. Equivalently, $\sigma(\psi)\prec \sigma(\Delta)$ and 
the minimal lc center $X_{\sigma(\psi)}$ is an invariant closed subvariety of $X_{\sigma(\Delta)}$

By Proposition~\ref{coredef}, the core is normal. Then so is each invariant closed irreducible 
subvariety of the core. Therefore $X_{\sigma(\psi)}$ is normal.
\end{proof}

\begin{exmp}
Let $X/k$ be an irreducible affine toric variety, weakly normal and $S_2$. 
Let $\Sigma$ be the sum of codimension one subvarieties at which $X/k$ is smooth. 
Then $\cO_X\isoto \omega_{(X/k,\Sigma)}^{[1]}$ and $(X/k,\Sigma)$ is a weakly normal log variety
with wlc singularities.

Indeed, let $\omega$ be the volume form on $T=\Spec k[M]$ induced by an orientation of $M$.
Then $(\omega)+\Sigma_{\bar{X}}=0$ on $\bar{X}$. Its residues descend by weak normality
(cf. the proof of Lemma~\ref{incr}), so $\omega$ becomes a nowhere zero global section of 
$\omega_{(X/k,\Sigma)}^{[1]}$. Since $\bar{C}+\bar{\Sigma}=\Sigma_{\bar{X}}$ and 
$(\bar{X},\Sigma_{\bar{X}})$ has lc singularities, the claim holds.
\end{exmp}

The lc centers of $(X/k,B)$ of codimension one are the invariant primes $E_i$ such that
either $\mult_{E_i}B=1$, or $E_i$ is an irreducible component of $C$. The normalization of 
$E_i$ is $E^n_i=\Spec k[(S_{\tau_i}-S_{\tau_i})\cap \tau_i]$, the different $B_{E^n_i}$
is induced by the log discrepancy function $\psi$ of $(X/k,B)$, and the residue of 
$\chi^{r\psi}\omega^{\otimes r}$ is $(\epsilon_i d_i)^{-1}\chi^{r\psi}\omega_{\cB_i}^{\otimes r}$.


\subsection{Reducible case}


Let $X=\Spec k[\cM]$ be weakly normal and $S_2$. Let $\{F\}$ and $\{\tau_i\}$
be the facets and codimension one faces of $\Delta$, respectively. The 
normalization $\pi\colon \bar{X}\to X$ is $\sqcup_F \bar{X}_F\to \cup_FX_F$,
where $\bar{X}_F=\Spec k[\bar{S}_F]$ and $\bar{S}_F=(S_F-S_F)\cap F$.
The invariant codimension one subvarieties of $X$ are $E_i=\Spec k[S_{\tau_i}]$
(either irreducible components of $C$, or invariant prime divisors at which $X/k$ 
is smooth). Note that
$
\pi^{-1}(E_i)=\sqcup_F (\bar{X}_F)_{\tau_i\cap F}
$
may have components of different dimension.
The primes of $\bar{X}$ over $E_i$ are $\bar{E}_{i,F}=(\bar{X}_F)_{\tau_i}$, 
one for each facet $F$ containing $\tau_i$.
For $F\succ \tau_i$,  $\bar{E}_{i,F}=\Spec k[\bar{S}_F\cap \tau_i]$ (note $\bar{S}_F\cap \tau_i=(S_F-S_F)\cap \tau_i$),
and the morphism $\pi_{i,F}\colon \bar{E}_{i,F} \to E_i$ is finite of degree $d_{\tau_i\prec F}$,
equal to the incidence number of $E_i\subset X_F$. Since $X/k$ is weakly normal,
$\Char(k)\nmid d_{\tau_i\prec F}$, that is $d_{\tau_i\prec F}$ is invertible in $k^\times$.
Let $X_{\sigma(\Delta)}$ be the core of $X$.

\begin{lem}\label{crc} Let $\omega_F$ be a volume form on the torus inside $X_F$, induced
by an orientation of the lattice $S_F-S_F$. Let $\omega_i$ be a volume form
on the torus inside $E_i$, induced by an orientation of the lattice $S_{\tau_i}-S_{\tau_i}$.
For $\tau_i\prec F$, there exists $\epsilon_{\tau_i\prec F}=\pm 1$ such that
$
\pi_{i,F}^*\omega_i=\epsilon_{\tau_i\prec F} d_{\tau_i\prec F}\cdot \Res_{ \bar{E}_{i,F} }\omega_F.
$

Let $n\in \Z$. The following properties are equivalent:
\begin{itemize}
\item[a)] There exist $c_F,c_i\in k^\times$ such that $\Res^{[n]}_{\bar{E}_{i,F}}(c_F\omega_F^{\otimes n})=
\pi_{i,F}^*(c_i\omega_i^{\otimes n})$ for every $\tau_i\prec F$.
\item[b)] For every cycle $F_0,F_1,\ldots,F_l,F_{l+1}=F_0$ of facets of $\Delta$ such that 
$F_i\cap F_{i+1} \ (0\le i<l)$ has codimension one, the following identity holds in $k^\times$:
$$
(\prod_{i=0}^l\frac{\epsilon_{F_i\cap F_{i+1}\prec F_{i+1}}  d_{F_i\cap F_{i+1}\prec F_{i+1}}}{
\epsilon_{F_i\cap F_{i+1}\prec F_i}  d_{F_i\cap F_{i+1}\prec F_i}})^n=1.
$$
\end{itemize}
\end{lem}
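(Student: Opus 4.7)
The plan is to reduce the lemma to two independent pieces, Part~1 (the residue/pullback formula) and Part~2 (the equivalence a)$\Leftrightarrow$b)), and then combine them.

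For Part~1, I would simply stack the two volume-form identities already established in the irreducible case. On the one hand, $\bar{E}_{i,F}\subset \bar{X}_F$ is the invariant codimension one subvariety of a normal toric variety associated to the face $\tau_i\prec F$, so by the residue computation of the affine toric log pair subsection $\Res_{\bar{E}_{i,F}}\omega_F=\pm \tilde\omega_{i,F}$, where $\tilde\omega_{i,F}$ is a volume form on the torus in $\bar{E}_{i,F}$ induced by an orientation of the lattice $(S_F-S_F)\cap(\tau_i-\tau_i)$. On the other hand, $\pi_{i,F}\colon \bar{E}_{i,F}\to E_i$ is the finite toric morphism associated to the sublattice $S_{\tau_i}-S_{\tau_i}\subseteq (S_F-S_F)\cap(\tau_i-\tau_i)$ of index $d_{\tau_i\prec F}$, so by the volume-form/index formula of the torus subsection $\pi_{i,F}^*\omega_i=\pm d_{\tau_i\prec F}\cdot\tilde\omega_{i,F}$. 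Combining the two gives the formula with $\epsilon_{\tau_i\prec F}=\pm 1$.

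For Part~2, I would first rewrite condition a) as a multiplicative cocycle. Using Part~1,
\[
\Res^{[n]}_{\bar{E}_{i,F}}(c_F\omega_F^{\otimes n})=c_F(\Res_{\bar{E}_{i,F}}\omega_F)^{\otimes n}=\frac{c_F}{(\epsilon_{\tau_i\prec F}d_{\tau_i\prec F})^n}\pi_{i,F}^*(\omega_i^{\otimes n}),
\]
(note $d_{\tau_i\prec F}\in k^\times$ by weak normality), so a) is equivalent to the existence of $c_F,c_i\in k^\times$ satisfying $c_F=c_i(\epsilon_{\tau_i\prec F}d_{\tau_i\prec F})^n$ for every $\tau_i\prec F$. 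The implication a)$\Rightarrow$b) is then immediate: along any cycle $F_0,F_1,\dots,F_l,F_{l+1}=F_0$ with $F_j\cap F_{j+1}$ of codimension one, eliminating the $c_{i}$'s yields $c_{F_{j+1}}/c_{F_j}=(\epsilon_{F_j\cap F_{j+1}\prec F_{j+1}}d_{F_j\cap F_{j+1}\prec F_{j+1}})^n/(\epsilon_{F_j\cap F_{j+1}\prec F_j}d_{F_j\cap F_{j+1}\prec F_j})^n$, and the telescoping product around the cycle is~$1$.

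The converse b)$\Rightarrow$a) is where the main subtlety lies, and it uses the $1$-connectedness of $\Delta$ (Lemma~\ref{1i}, which applies since $X$ is $S_2$). Fix a facet $F_0$, set $c_{F_0}=1$, and for any other facet $F$ choose a chain $F_0=G_0,G_1,\dots,G_m=F$ with $G_j\cap G_{j+1}$ of codimension one (exists by $1$-connectedness) and define $c_F$ as the corresponding telescoping product. The cycle relation in b) makes $c_F$ independent of the chosen chain. Finally, for each codimension one face $\tau_i$ pick any $F\succ\tau_i$ and set $c_i=c_F/(\epsilon_{\tau_i\prec F}d_{\tau_i\prec F})^n$; independence from $F$ follows because any two facets $F,F'\succ\tau_i$ satisfy $F\cap F'\supseteq\tau_i$ of codimension one, so the chain to $F$ can be extended by one step to $F'$, and the defining formula gives the same~$c_i$. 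The main obstacle is precisely this well-definedness check, which hinges on $1$-connectedness (without $S_2$ it would fail).
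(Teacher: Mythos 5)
Your proposal is correct and follows essentially the same route as the paper: interpret a) as the cocycle condition $c_F=c_i(\epsilon_{\tau_i\prec F}d_{\tau_i\prec F})^n$, get b) by telescoping around a cycle, and conversely use $1$-connectedness (from the $S_2$ hypothesis) to propagate $c_{F_0}=1$ along chains, with the cycle condition ensuring well-definedness of the $c_F$ and of $c_i$ via $F\cap F'=\tau_i$ for two facets containing $\tau_i$. The only difference is that you spell out the first assertion (combining $\Res_{\bar{E}_{i,F}}\omega_F=\pm\tilde\omega_{i,F}$ with the index-$d_{\tau_i\prec F}$ pullback formula), which the paper leaves implicit but records in the proof of Lemma~\ref{incr}.
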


\begin{proof} Denote $e_{i,F}=(\epsilon_{\tau_i\prec F}d_{\tau_i\prec F})^n$.
Property a) is equivalent to $c_F=c_i\cdot e_{i,F}$ for every $\tau_i\prec F$.

$a)\Longrightarrow b)$ Suppose a) holds. If $(F,F')$ is a pair of facets which intersect in a 
codimenson one face, then $c_F$ determines $c_{F'}$, by the formula
$$
c_{F'}=c_F\cdot \frac{e_{F\cap F'\prec F'}}{ e_{F\cap F'\prec F}}.
$$
Let $F_0,F_1,\ldots,F_l,F_{l+1}=F_0$ be a 
cycle such that $F_i\cap F_{i+1} \ (0\le i <l)$ has codimension one. Multiplying the 
above formulas for each pair $(F_i,F_{i+1}) \ (0\le i<l)$, and factoring out the nonzero 
constants $c_{F_i}$, we obtain 
$$
\prod_{i=0}^l\frac{e_{F_i\cap F_{i+1}\prec F_{i+1}}}{e_{F_i\cap F_{i+1}\prec F_i}} =1.
$$

$b)\Longrightarrow a)$ Fix a facet $F_0$, set $c_{F_0}=1$.
Since $\Delta$ is $1$-connected, each facet $F$ is the end of a chain of facets 
$F_0,F_1,\ldots,F_l=F$ such that $F_i\cap F_{i+1}$ has codimension one for every $0\le i <l$. 
Define 
$$
c_F=\prod_{0\le i<l} \frac{e_{F_i\cap F_{i+1}\prec F_{i+1}}}{e_{F_i\cap F_{i+1}\prec F_i}}\in k^\times.
$$
The definition is independent of the choice of the chain reaching $F$, by the cycle condition b)
applied to the concatenation of two chains.
For each $\tau_i$, choose a facet $F$ containing it, and define 
$$
c_i=\frac{c_F}{e_{i,F}}.
$$
The definition is independent of the choice of $F$. Indeed, if $F,F'$ are two facets which contain
$\tau_i$, then $\tau_i=F\cap F'$. Forming a cycle with a chain from $F_0$ to $F$, followed by
$F'$, and by the reverse of a chain from $F_0$ to $F'$, we obtain from b) that 
$$
\frac{c_F}{e_{i,F}}=\frac{c_{F'}}{e_{i,F'}}.
$$
Property a) holds by construction.
\end{proof}

We say that $X/k$ is {\em $n$-orientable} if the equivalent properties of Lemma~\ref{crc} hold.
If $n$ is even, this property is independent on the choice of orientations, and becomes
$$
(\prod_{i=0}^l\frac{ d_{F_i\cap F_{i+1}\prec F_{i+1}}}{d_{F_i\cap F_{i+1}\prec F_i}})^n=1\in k^\times.
$$
We say that $X/k$ is {\em $\Q$-orientable} if it is $n$-orientable for some $n\ge 1$.

\begin{lem}\label{crcrem} Suppose $d_{\tau_i\prec F}$ does not depend on $F$.
Then $X/k$ is $n$-orientable, for every $n\in 2\Z$. In particular, $X$ is $\Q$-orientable.
\end{lem}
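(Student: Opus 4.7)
The plan is to invoke directly criterion (b) of Lemma~\ref{crc}, specialized to an even exponent $n$. When $n \in 2\Z$, each sign $\epsilon_{\tau \prec F} = \pm 1$ is raised to an even power and so contributes $1$, which is why the criterion takes the cleaner form displayed just before the statement of Lemma~\ref{crcrem}. So I only need to check that for every cycle $F_0, F_1, \ldots, F_l, F_{l+1} = F_0$ of facets of $\Delta$ with $F_i \cap F_{i+1}$ of codimension one, the product
$$
\prod_{i=0}^{l} \frac{d_{F_i \cap F_{i+1} \prec F_{i+1}}}{d_{F_i \cap F_{i+1} \prec F_i}}
$$
equals $1$ in $k^\times$ (then raising to the even power $n$ will still give $1$).

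The hypothesis asserts that for every codimension one cone $\tau_i \in \Delta$, the integer $d_{\tau_i \prec F}$ is independent of the facet $F \succ \tau_i$. So I may write $d_{\tau_i}$ for this common value. Applied to $\tau = F_i \cap F_{i+1}$, this gives
$$
d_{F_i \cap F_{i+1} \prec F_{i+1}} = d_{F_i \cap F_{i+1}} = d_{F_i \cap F_{i+1} \prec F_i},
$$
so each factor in the product is $1$ in $k^\times$. Hence the product itself is $1$, and condition (b) of Lemma~\ref{crc} is satisfied for every even $n$. This proves $n$-orientability for all $n \in 2\Z$.

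For the final sentence, $\Q$-orientability is by definition $n$-orientability for \emph{some} $n \ge 1$, so it follows at once by taking, say, $n = 2$. There is really no obstacle here: once one observes that the independence of $d_{\tau_i \prec F}$ on $F$ forces every factor in the cycle product to equal $1$ termwise, the conclusion is immediate, and the only care to take is to note that the evenness of $n$ removes the $\epsilon$'s so that no sign condition needs to be verified.
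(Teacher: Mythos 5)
Your proof is correct and follows exactly the paper's argument: the paper's entire proof is the one-line observation that each ratio $\frac{d_{F_i\cap F_{i+1}\prec F_{i+1}}}{d_{F_i\cap F_{i+1}\prec F_i}}$ equals $1$ when the incidence numbers do not depend on the facet, which combined with the evenness of $n$ (killing the signs $\epsilon_{\tau_i\prec F}$) verifies criterion b) of Lemma~\ref{crc}. You have simply spelled out the same reasoning in more detail.
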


\begin{proof}
Since $\frac{d_{F_i\cap F_{i+1}\prec F_{i+1}}}{d_{F_i\cap F_{i+1}\prec F_i}}=1$ in this case.
\end{proof}

\begin{exmp}\label{2ex} 
Some examples where the incidence numbers $d_{\tau_i\prec F}$ do not depend on $F$ are:
\begin{itemize}
\item[1)] $X$ is irreducible.
\item[2)] $X$ has normal irreducible components (equivalent to $X_\sigma$ normal for every
$\sigma\in \Delta$). Then $d_{\tau_i\prec F}=1$ for all $\tau_i\prec F$.
\item[3)] $X$ is nodal in codimension one. Equivalently, for each codimension one face $\tau_i\in \Delta$, 
either $\tau_i$ is contained in a unique facet $F$ and $d_{\tau_i\prec F}\mid 2$, or $\tau_i$ is contained
in exactly two facets $F,F'$ and $d_{\tau_i\prec F}=d_{\tau_i\prec F'}=1$.
\end{itemize}
\end{exmp}

Let $B=\sum_i b_iE_i$ be a $\Q$-Weil divisor supported by invariant codimension one subvarieties of 
$X$ at which $X/k$ is smooth. Note that $X/k$ is smooth at $E_i$ if and only if $E_i$ is contained
in a unique irreducible component $X_F$ of $X$, and $d_{E_i\subset X_F}=1$.

\begin{lem}\label{incrred} Let $n\in \Z$. The following properties are equivalent:
\begin{itemize}
\item[a)] $\omega^{[n]}_{(X/k,B)}$ is invertible at some point $x$, which belongs to the closed orbit of $X$.
\item[b)] $\cO_X\simeq \omega^{[n]}_{(X/k,B)}$.
\item[c)] $X$ is $n$-orientable and there exists 
$m\in S_{\sigma(\Delta)}- S_{\sigma(\Delta)}$ such that
$(\chi^m)+\lfloor n(-\Sigma_{\bar{X}}+\bar{C}+\bar{B}) \rfloor=0$ on $\bar{X}_F$ for every $F$.
\end{itemize}
\end{lem}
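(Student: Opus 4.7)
The plan is to mimic the proof of Lemma~\ref{incr}, working facet by facet, while tracking the scalars arising from residue descent so that they assemble into the $n$-orientability cycle condition of Lemma~\ref{crc}. For $a)\Rightarrow c)$, since $X$ is affine and the coherent $\cO_X$-module $\omega^{[n]}_{(X/k,B)}$ carries a compatible $T$-action, complete reducibility will produce a semi-invariant global generator $\omega$ at $x$. Because $x$ lies in the closed orbit of $X$, it lies in every irreducible component $X_F$, so $\omega|_{X_F}\ne 0$ is semi-invariant of some common $T$-weight $u\in M$. Applying Lemma~\ref{NI} to the normal toric log pair $(\bar X_F/k,\bar C_F+\bar B_F)$ at a preimage of $x$ forces $u\in S_F-S_F$ and gives $(\chi^u)+n(-\Sigma_{\bar X_F}+\bar C_F+\bar B_F)=0$ on $\bar X_F$ for every $F$. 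Writing $\pi^*\omega|_{\bar X_F}=c_F\chi^u\omega_F^{\otimes n}$ with $c_F\in k^\times$, Lemma~\ref{crc} yields
$$
\Res^{[n]}_{\bar E_{i,F}}(\pi^*\omega)= c_F(\epsilon_{\tau_i\prec F}d_{\tau_i\prec F})^{-n}\chi^u\,\pi_{i,F}^*\omega_i^{\otimes n}.
$$
Property b) of the definition of $\omega^{[n]}_{(X/k,B)}$ requires this to descend to $E_i$ for every codimension one component $E_i\subset C$. Descent of $\chi^u$ gives $u\in S_{\tau_i}-S_{\tau_i}$; descent of the scalar gives $c_F(\epsilon_{\tau_i\prec F}d_{\tau_i\prec F})^{-n}$ independent of $F\succ\tau_i$. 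The former, combined with $u\in\bigcap_F(S_F-S_F)$, places $u$ in $S_{\sigma(\Delta)}-S_{\sigma(\Delta)}$ by the identity proved in Proposition~\ref{coredef}; the latter is precisely property a) of Lemma~\ref{crc}, equivalent to $n$-orientability. Setting $m:=u$ then yields c).

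For $c)\Rightarrow b)$, given $m$ and the $n$-orientability one invokes Lemma~\ref{crc}a) to select $(c_F),(c_i)\in k^\times$ with $\Res^{[n]}_{\bar E_{i,F}}(c_F\omega_F^{\otimes n})=\pi_{i,F}^*(c_i\omega_i^{\otimes n})$. Since $m\in S_{\sigma(\Delta)}-S_{\sigma(\Delta)}\subseteq S_{\tau_i}-S_{\tau_i}$ for every $E_i\subset C$, the rational form on $\bar X$ determined on each $\bar X_F$ by $c_F\chi^m\omega_F^{\otimes n}$ satisfies both the divisor identity and the residue-descent hypothesis of Lemma~\ref{lg}, so it defines a nowhere-zero global section of $\omega^{[n]}_{(X/k,B)}$, trivializing the sheaf. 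The implication $b)\Rightarrow a)$ is trivial.

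The principal obstacle sits in $a)\Rightarrow c)$: first, one must verify that a single semi-invariant generator, pinned down at one point of the closed orbit, has a uniform $T$-weight $u\in M$ across every component (so that one lattice element $m$ works on each $\bar X_F$ simultaneously); and second, one must translate the residue-descent conditions at codimension one non-normal primes into exactly the cycle identity of Lemma~\ref{crc}b) -- no more and no less. This is where $1$-connectedness of $\Delta$ and the description of the core $X_{\sigma(\Delta)}$ from Proposition~\ref{coredef} enter essentially.
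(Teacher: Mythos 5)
Your proposal is correct and takes essentially the same route as the paper's proof: produce a semi-invariant generator by complete reducibility, apply Lemma~\ref{NI}/Lemma~\ref{incr} on each component of the normalization, and translate the residue-descent conditions along the conductor into exactly the scalar identities $c_F=c_i(\epsilon_{\tau_i\prec F}d_{\tau_i\prec F})^n$ of Lemma~\ref{crc}, i.e.\ $n$-orientability. The only cosmetic difference is that you obtain the single lattice element $m$ directly as the common $T$-weight of the semi-invariant generator, whereas the paper produces one $m_F$ per facet and then identifies them via the residue conditions and $1$-connectedness; both are valid.
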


\begin{proof} We use the definitions and notations of Lemma~\ref{crc}.

$a)\Longrightarrow c)$ As in the proof of Lemma~\ref{incr}, there exists a semi-invariant form
$\omega\in \Gamma(X,\omega^{[n]}_{(X/k,B)})$ which induces a local trivialization at $x$.
Let $F$ be a facet of $\Delta$. Let $\bar{x}_F\in \bar{X}_F$ be a point lying over $x$. Then 
$\pi^*\omega|_{\bar{X}_F}\in \Gamma(\bar{X}_F, \omega^{[n]}_{(\bar{X}/k,\bar{C}+\bar{B})})$
induces a local trivialization at $\bar{x}_F$, which belongs to the closed orbit of $\bar{X}_F$.
By Lemma~\ref{incr}, there exists $m_F\in S_F-S_F$ such that 
$(\chi^{m_F})+\lfloor n(-\Sigma_{\bar{X}}+\bar{C}+\bar{B}) \rfloor=0$ on $\bar{X}_F$,
so that $\chi^{m_F}\omega_F^{\otimes n}\in \Gamma(\bar{X}_F,
\omega^{[n]}_{(\bar{X}/k,\bar{C}+\bar{B})})$ is a nowhere zero section.
Since $\omega$ is $T$-semi-invariant, we obtain 
$
\pi^*\omega|_{\bar{X}_F}=c_F\cdot \chi^{u_F}\chi^{m_F}\omega_F^{\otimes n},
$
where $c_F\in k^\times$ and $\chi^{u_F}$ is a global unit on $\bar{X}_F$. Replacing $m_F$ by $u_F+m_F$,
we obtain 
$$
\pi^*\omega|_{\bar{X}_F}=c_F\cdot \chi^{m_F}\omega_F^{\otimes n}.
$$
By assumption, there exists $\eta_i\in \omega_{k(E_i)/k}^{\otimes n}$ such that 
for every $E_i\subset C$, and every inclusion $E_i\subset X_F$, we have 
$$
\Res^{[n]}_{\bar{E}_{i,F}} \pi^*\omega =\pi_{i,F}^* \eta_i.
$$
We have $\eta_i=f_i\omega_i^{\otimes n}$ for some $f_i\in k(E_i)^\times$. The residue formula becomes
$$
c_F\chi^{m_F}=(\epsilon_{\tau_i\prec F}d_{\tau_i\prec F})^n \pi_{i,F}^*f_i.
$$
Then $f_i$ is a unit on the torus inside $E_i$, hence $f_i=c_i\chi^{m_i}$ for some $c_i\in k^\times$
and $m_i\in S_{\tau_i}-S_{\tau_i}$. We obtain 
$$
c_F\chi^{m_F}=c_i (\epsilon_{\tau_i\prec F}d_{\tau_i\prec F})^n  \chi^{m_i}.
$$
That is $c_F=(\epsilon_{\tau_i\prec F}d_{\tau_i\prec F})^n$ and $m_F=m_i$.
Since $\Delta$ is $1$-connected, the latter means that $m_F=m_i=m$ for all $F$ and $i$, for some 
$m\in S_{\sigma(\Delta)}- S_{\sigma(\Delta)}$. The former means that $X$ is $n$-orientable. 

$c) \Longrightarrow b)$ By Lemma~\ref{crc}, there exist $c_F,c_i\in k^\times$ with
$\Res^{[n]}_{\bar{E}_{i,F}}(c_F\omega_F^{\otimes n})=\pi_{i,F}^*(c_i\omega_i^{\otimes n})$ if $
\tau_i\prec F$.
The pluridifferential forms $\{c_F\chi^m\omega_F^{\otimes n}\}_F$ on the normalization of $X$
glue to a nowhere zero global section $\omega$ of $\omega^{[n]}_{(X/k,B)}$. Moreover,
$
\Res^{[n]}_{\bar{E}_{i,F}} \pi^*\omega =\pi_{i,F}^*(c_i\omega_i^{\otimes n}).
$

$b) \Longrightarrow a)$ is clear.
\end{proof}

\begin{prop}\label{wlpC} $(X/k,B)$ is a weakly normal log pair if and only if $X$ is $\Q$-orientable, 
and the components of the normalization $(\bar{X}/k,\bar{C}+\bar{B})$ 
are toric normal log pairs with the same log discrepancy function $\psi$. Moreover,
\begin{itemize}
\item $B$ is effective if and only if $\bar{C}+\bar{B}$ is effective.
\item $(X/k,B)$ has wlc singularities if and only if $(\bar{X}/k,\bar{C}+\bar{B})$ has 
lc singularities, if and only if the coefficients of $B$ are at most $1$, if and only if 
$\psi\in \sigma(\Delta)$.
\end{itemize}
\end{prop}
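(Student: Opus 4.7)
The plan is to mirror the proof of Proposition~\ref{IC}, substituting Lemma~\ref{incrred} for Lemma~\ref{incr}. The key new feature is that $r$-orientability becomes a genuine obstruction (it was automatic in the irreducible case), and the normalization $\bar X$ now has several components, so ``the same log discrepancy function'' encodes a compatibility condition across components that must be inspected carefully.

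For the forward direction, suppose $\omega^{[r]}_{(X/k,B)}$ is invertible for some $r\ge 1$ with $rB$ of integer coefficients. Since $X$ is affine and admits a closed orbit, invertibility at a point of that closed orbit forces global triviality as in Lemma~\ref{incrred}(b). The equivalence (a)$\Leftrightarrow$(c) then gives both $r$-orientability of $X$ (hence $\Q$-orientability) and an element $m\in S_{\sigma(\Delta)}-S_{\sigma(\Delta)}$ with $(\chi^m)+r(-\Sigma_{\bar X}+\bar C+\bar B)=0$ on every $\bar X_F$. Setting $\psi=m/r$, the divisorial identity on each $\bar X_F$ is precisely the condition that $\psi$ is a log discrepancy function for the toric normal log pair $(\bar X_F,(\bar C+\bar B)|_{\bar X_F})$; hence all components of the normalization share the common $\psi$.

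For the reverse direction, assume $X$ is $\Q$-orientable, say $n$-orientable, and each normalized component is a toric normal log pair with common log discrepancy function $\psi$. One first verifies that $\psi$ lies in $(S_{\sigma(\Delta)}-S_{\sigma(\Delta)})_\Q$: the coefficient $1$ of each $\bar E_{i,F}$ in $\bar C$ forces $\langle e_{i,F},\psi\rangle=0$ whenever $E_i\subset C$, i.e. $\psi\in(\tau_i)_\Q$; combined with $\psi\in(S_F-S_F)_\Q$ for every facet $F$, Proposition~\ref{coredef} yields the desired membership. Choose $r\ge 1$ such that $rB$ has integer coefficients, $r\psi\in S_{\sigma(\Delta)}-S_{\sigma(\Delta)}$, and $X$ is $r$-orientable (using that $n$-orientability implies $kn$-orientability for every $k\ge 1$). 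Then Lemma~\ref{incrred}(c)$\Rightarrow$(b) yields $\cO_X\simeq\omega^{[r]}_{(X/k,B)}$, establishing that $(X/k,B)$ is a weakly normal log pair.

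The ``moreover'' assertions follow formally. Effectivity of $B$ is equivalent to effectivity of $\bar C+\bar B$, since $\bar C$ is always effective and $\bar B$ retains the coefficients of $B$ on the smooth locus. The wlc characterization is the definition of weakly log canonical applied to the normalization, so $(X/k,B)$ is wlc iff every $(\bar X_F,(\bar C+\bar B)|_{\bar X_F})$ is lc, iff $\psi\in F$ for every facet $F$; combined with $\psi\in\tau_i$ for each $E_i\subset C$, this is exactly $\psi\in\sigma(\Delta)$, equivalently $b_i\le 1$ for all $i$. The main obstacle is the reverse direction, where one must verify that the looseness in the notion of a ``common log discrepancy function'' is absorbed by the constraints coming from the non-normal divisors $E_i\subset C$, pinning $\psi$ down inside the core lattice $(S_{\sigma(\Delta)}-S_{\sigma(\Delta)})_\Q$ so that Lemma~\ref{incrred} becomes directly applicable.
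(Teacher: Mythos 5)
Your proposal is correct and follows essentially the same route as the paper: the forward direction via Lemma~\ref{incrred}(a)$\Rightarrow$(c) applied at a point of the closed orbit, and the converse by pinning $\psi$ inside $(S_{\sigma(\Delta)}-S_{\sigma(\Delta)})_\Q$ (via Proposition~\ref{coredef}) and clearing denominators so that Lemma~\ref{incrred}(c)$\Rightarrow$(b) applies; the paper clears denominators slightly more explicitly by multiplying by the incidence numbers $d_{\tau_i\prec F}$, but this is the same idea. The ``moreover'' clauses are handled as in the paper.
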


\begin{proof}
Suppose $(X/k,B)$ is a weakly normal log pair. There exists an even integer $r\ge 1$ such that 
$rB$ has integer coefficients and $\omega^{[r]}_{(X/k,B)}$ is invertible. Then $\pi^*\omega^{[r]}_{(X/k,B)}=
\omega^{[r]}_{(\bar{X}/k,\bar{C}+\bar{B})}$ is invertible, hence each irreducible component of 
$(\bar{X}/k,\bar{C}+\bar{B})$ is a toric log pair. By Lemma~\ref{incrred}, $X$ is $r$-orientable
and there exists $m\in S_{\sigma(\Delta)}- S_{\sigma(\Delta)}$ such that 
$(\chi^m)+\lfloor r(-\Sigma_{\bar{X}}+\bar{C}+\bar{B})|_{\bar{X}_F} \rfloor=0$ 
for every facet $F$ of $\Delta$. Therefore $\psi=\frac{1}{r}m\in (S_{\sigma(\Delta)}- S_{\sigma(\Delta)})_\Q$ 
is a log discrepancy function for $(\bar{X}/k,\bar{C}+\bar{B})|_{\bar{X}_F}$, for each facet $F$. We call $\psi$ 
a {\em log discrepancy function} of $(X/k,B)$.

Conversely, suppose that $X$ is $\Q$-orientable, and that the irreducible (connected) components 
of the normalization $(\bar{X}/k,\bar{C}+\bar{B})$ are toric normal log pairs with the same log 
discrepancy function $\psi\in \cap_F (S_F-S_F)_\Q$. We have 
$(\chi^\psi)+(-\Sigma_{\bar{X}}+\bar{C}+\bar{B})|_{\bar{X}_F} =0$ for every facet $F$ of $\Delta$.
Choose an even integer $r\ge 1$ such that $r\psi\in \cap_F(S_F-S_F)$.
Let $\tau_i$ be a codimension one face of $\Delta$. Choose $F\succ \tau_i$.
The $\Q$-divisor $(\chi^\psi)+(-\Sigma_{\bar{X}}+\bar{C}+\bar{B})|_{\bar{X}_F}$ is zero at $\bar{E}_{i,F}$, 
that is $r\psi\in (\bar{S}_F)_{\tau_i}-(\bar{S}_F)_{\tau_i}$. Therefore 
$d_{\tau_i\prec F} r\psi\in S_{\tau_i}-S_{\tau_i}$.

Let $d$ be a positive integer such that $X$ is $d$-orientable, and 
$d_{\tau_i\prec F}\mid d$ for all $\tau_i\prec F$. Then $X$ is $dr$-orientable and
$m=dr\psi$ satisfies the properties of Lemma~\ref{incrred}.c), hence $\omega^{[dr]}_{(X/k,B)}$ is invertible.

Suppose $(X/k,B)$ is a weakly normal log pair. It has wlc singularities if and only if 
each irreducible component of $(\bar{X}/k,\bar{C}+\bar{B})$ has lc singularities. This holds if and only if 
the coefficients of $B$ are at most $1$, or equivalently, $\psi\in F$ for every facet $F$.
\end{proof}

\begin{cor} $(X/k,B)$ has slc singularities if and only $X$ has at most nodal singularities in codimension one, 
and the components of the normalization $(\bar{X}/k,\bar{C}+\bar{B})$ are toric normal log pairs with lc singularities
having the same log discrepancy function.
\end{cor}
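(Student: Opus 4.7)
The plan is to derive this corollary by combining Proposition~\ref{wlpC} with the characterization of slc singularities among weakly normal log varieties with wlc singularities established earlier, namely: such a pair is slc if and only if $X$ is Gorenstein in codimension one, equivalently, if and only if $X$ has at most nodal singularities in codimension one. What needs to be checked beyond a direct combination is that the $\Q$-orientability hypothesis appearing in Proposition~\ref{wlpC} can be dropped under the nodality assumption.

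For the forward direction, assume $(X/k,B)$ is slc. Then it is a weakly normal log variety with wlc singularities, so Proposition~\ref{wlpC} yields that the components of $(\bar{X}/k,\bar{C}+\bar{B})$ are toric normal log pairs with lc singularities sharing a common log discrepancy function $\psi$. The slc condition also forces $X$ to be Gorenstein in codimension one; since a weakly normal codimension one point is Gorenstein only if it is smooth or nodal, $X$ is nodal in codimension one.

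For the converse, assume $X$ is nodal in codimension one, and that the components of $(\bar{X}/k,\bar{C}+\bar{B})$ are toric normal log pairs with lc singularities having a common log discrepancy function. By Example~\ref{2ex}(3), the nodal hypothesis translates into the combinatorial statement that for each codimension one face $\tau_i\in\Delta$, either $\tau_i\prec F$ for a unique facet $F$ with $d_{\tau_i\prec F}\mid 2$, or $\tau_i$ is contained in exactly two facets $F,F'$ with $d_{\tau_i\prec F}=d_{\tau_i\prec F'}=1$. In both cases the incidence numbers $d_{\tau_i\prec F}$ are independent of the facet $F\succ \tau_i$, so by Lemma~\ref{crcrem} $X$ is $n$-orientable for every even $n$, in particular $\Q$-orientable. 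Applying Proposition~\ref{wlpC}, $(X/k,B)$ is a weakly normal log pair with wlc singularities, and since $X$ is nodal in codimension one it is Gorenstein in codimension one, hence slc by the characterization recalled above.

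The main point requiring care is the reduction of $\Q$-orientability to a consequence of nodality, i.e.\ the verification via Example~\ref{2ex}(3) and Lemma~\ref{crcrem} that the constraints on incidence numbers imposed by nodal codimension one singularities are strong enough to trivialize the cycle obstruction in Lemma~\ref{crc}(b). Once this is observed, the corollary follows cleanly by combining the two preceding characterizations.
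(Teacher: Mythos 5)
Your proposal is correct and follows essentially the same route as the paper's own (very terse) proof: the paper likewise invokes Example~\ref{2ex}.3) for the combinatorial meaning of nodality in codimension one, deduces $2$-orientability from it (your appeal to Lemma~\ref{crcrem} makes this explicit), and then applies Proposition~\ref{wlpC}, with the identification of slc pairs among wlc ones via the Gorenstein-in-codimension-one/nodal criterion from Section 3 left implicit. Your write-up simply spells out the steps the paper compresses.
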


\begin{proof} See Example~\ref{2ex}.3) for the combinatorial criterion for $X$ to be at most nodal in codimension
one. In particular, $X$ is $2$-orientable. We may apply Proposition~\ref{wlpC}.
\end{proof}

\begin{lem} Suppose $(X/k,B)$ is a weakly normal log pair, with log discrepancy function $\psi$.
\begin{itemize} 
\item[1)] $(X/k,B)_{-\infty}=\cup_{b_i>1}E_i$ and $(\bar{X}/k,\bar{C}+\bar{B})_{-\infty}=
\sqcup_F \cup_{E_i\subset F,b_i>1}\bar{E}_{i,F}=\pi^{-1}((X,B)_{-\infty})$.
In particular, $\pi$ maps lc centers onto lc centers.
\item[2)] The lc centers of $(X/k,B)$ are $X_\sigma$, where $\psi\in \sigma\in \Delta$ and 
$\sigma\not\subset \tau_i$ if $b_i>1$.
\item[3)] Suppose $(X/k,B)$ is wlc.  Let $Z=X_\sigma$ be an lc center of $(X/k,B)$.
Then $\pi^{-1}(Z)$ is a disjoint union of lc centers, one for each irreducible component of $\bar{X}$:
$$
\pi^{-1}(Z)=\sqcup_F (\bar{X}_F)_{F\cap \sigma}.
$$
Some components of $\pi^{-1}(Z)$ may not dominate $Z$.
\end{itemize}
\end{lem}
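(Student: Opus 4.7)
The plan is to reduce all three parts to the normal-toric case from the previous subsection, applied component-wise to the normalization $\pi\colon \bar{X}=\sqcup_F \bar{X}_F\to X$.

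First I would prove part 1). By Proposition~\ref{wlpC}, each $(\bar{X}_F,(\bar{C}+\bar{B})|_{\bar{X}_F})$ is a normal toric log pair with log discrepancy function $\psi$. The normal case tells me that its non-lc locus is the union of invariant codimension-one subvarieties $\bar{E}_{i,F}$ whose coefficient in $(\bar{C}+\bar{B})|_{\bar{X}_F}$ is strictly greater than $1$. Since $\bar{C}$ has coefficient $1$ and $\bar{B}$ is supported on smooth primes, the coefficient at $\bar{E}_{i,F}$ exceeds $1$ exactly when $b_i>1$; such $E_i$ must be smooth, hence $\tau_i$ lies in the unique facet $F$. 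Taking the disjoint union over $F$ gives the description of $(\bar{X},\bar{C}+\bar{B})_{-\infty}$. Applying $\pi$ (which sends $\bar{E}_{i,F}\mapsto E_i$) yields $(X/k,B)_{-\infty}=\cup_{b_i>1}E_i$; the identity with $\pi^{-1}$ follows because for $b_i>1$ the face $\tau_i$ lies in a unique facet, so $\pi^{-1}(E_i)=\bar{E}_{i,F}$ reduces to a single component. The last assertion is then the remark recorded just after the definition of wlc singularities.

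For part 2), I would combine part 1) with the normal-case classification. An lc center of $(X/k,B)$ is, by definition, a $\pi$-image of an lc center of $(\bar X,\bar C+\bar B)$ not contained in $(X,B)_{-\infty}$. The normal case gives the lc centers of $(\bar X_F,\ldots)$ as $(\bar X_F)_\sigma$ with $\sigma\prec F$, $\psi\in\sigma$, and $\sigma\not\subset\tau_i$ whenever $\tau_i\prec F$ and $b_i>1$; their images are $X_\sigma$. Throwing away those in the non-lc locus imposes the condition $\sigma\not\subset\tau_j$ for every $b_j>1$ (not just those with $\tau_j\prec F$), which absorbs the partial condition from the normal case. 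Conversely, $\sigma\in\Delta$ means $\sigma\prec F$ for some facet, and together with $\psi\in\sigma$ and the universal non-containment condition this produces the required lc center.

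For part 3), the computation of $\pi^{-1}(Z)$ is purely set-theoretic: since $\pi$ splits as $\pi_F\colon \bar X_F\to X_F$ and $X_\sigma\cap X_F=X_{F\cap\sigma}$, the preimage of the normal toric orbit closure $X_{F\cap\sigma}$ under $\pi_F$ is $(\bar X_F)_{F\cap\sigma}$, giving $\pi^{-1}(Z)=\sqcup_F (\bar X_F)_{F\cap\sigma}$. The key observation to establish lc-ness of each component is that under the wlc hypothesis, Proposition~\ref{wlpC} forces $\psi\in\sigma(\Delta)\subseteq F$ for every facet $F$ and forces all $b_i\le 1$. Hence $\psi\in F\cap\sigma$ (combining $\psi\in F$ with $\psi\in\sigma$), and the prohibition $F\cap\sigma\not\subset\tau_i'$ for coefficient-$>1$ faces $\tau_i'\prec F$ is vacuous. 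The normal-case classification then immediately certifies each $(\bar X_F)_{F\cap\sigma}$ as an lc center of $(\bar X_F,(\bar C+\bar B)|_{\bar X_F})$; that some of these have $F\cap\sigma\precnsim\sigma$ explains why some components fail to dominate $Z$.

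The only mild obstacle is keeping careful track of which $\tau_i$'s contribute non-lc conditions on each $\bar X_F$ (namely only those $\tau_i\prec F$ with $b_i>1$, which forces $F$ to be the unique facet through $\tau_i$); once this bookkeeping is in place, everything reduces cleanly to the normal irreducible case already established.
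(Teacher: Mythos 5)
There is a genuine gap in your treatment of part 1), which is the only substantive part of the lemma. You justify the equality $\pi^{-1}((X,B)_{-\infty})=\sqcup_F\cup_{E_i\subset F,\,b_i>1}\bar{E}_{i,F}$ by asserting that for $b_i>1$ the preimage $\pi^{-1}(E_i)$ ``reduces to a single component'' $\bar{E}_{i,F}$. This is false: as recorded earlier in the paper, $\pi^{-1}(E_i)=\sqcup_{F'}(\bar{X}_{F'})_{\tau_i\cap F'}$, and for every facet $F'\ne F$ the piece $(\bar{X}_{F'})_{\tau_i\cap F'}$ is a nonempty (lower-dimensional) invariant subvariety of $\bar{X}_{F'}$ --- the closed subset $E_i$ still meets the other irreducible components of $X$ in deeper strata even though its generic point is a smooth point of $X$. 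So the inclusion $\pi^{-1}((X,B)_{-\infty})\subseteq(\bar{X},\bar{C}+\bar{B})_{-\infty}$ amounts to the nontrivial assertion that $(\bar{X}_{F'})_{\tau_i\cap F'}$ is contained in $\cup_{E_j\subset F',\,b_j>1}\bar{E}_{j,F'}$ for every facet $F'$, i.e.\ that the non-lc locus upstairs is saturated under $\pi$. Your argument never addresses this.

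The paper's proof supplies exactly the missing step: restating the inclusion as ``if $(\bar{X},\bar{C}+\bar{B})$ is lc at $\bar{x}$, then $(X,B)$ is wlc at $\pi(\bar{x})$,'' it localizes so that $\pi(\bar{x})$ lies in the closed orbit, deduces $\psi\in F$ for the facet with $\bar{x}\in\bar{X}_F$, and then propagates $\psi\in F'$ to every other facet along a chain furnished by $1$-connectedness of $\Delta$, using that each codimension-one intersection $F_j\cap F_{j+1}$ is a component of the conductor $\bar{C}$, hence has coefficient exactly $1$ and forces $\psi$ into its span. Without some version of this connectedness-plus-conductor argument, the claimed equality (and hence the assertion that $\pi$ maps lc centers onto lc centers, which you correctly reduce to it) is unproved. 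Your parts 2) and 3) are fine and match the paper's (terse) reductions to the normal irreducible case, but they both rest on the equality in part 1).
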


\begin{proof} 1) We have $(\bar{X}/k,\bar{C}+\bar{B})_{-\infty}=
\sqcup_F \cup_{E_i\subset F,b_i>1}\bar{E}_{i,F}$. Its image $(X/k,B)_{-\infty}$ on $X$
equals $\cup_{b_i>1}E_i$. The inclusion $(\bar{X}/k,\bar{C}+\bar{B})_{-\infty}\subseteq
\pi^{-1}((X,B)_{-\infty})$ is clear, while the converse may be restated as follows: 
if $(\bar{X},\bar{C}+\bar{B})$ is lc at a closed point $\bar{x}$, then 
$(X,B)$ is wlc at $\pi(\bar{x})$. To prove this, we may localize and suppose 
$\pi(\bar{x})$ belongs to the closed orbit of $X$. If $F$ is the facet such that $\bar{x}\in \bar{X}_F$,
it follows that $\bar{x}$ belongs to the closed orbit of $\bar{X}_F$. We know that the toric log pair
$(\bar{X}/k,\bar{C}+\bar{B})|_{\bar{X}_F}$ has lc singularities at $\bar{x}$, a point belonging to its
closed orbit. Then $(\bar{X}/k,\bar{C}+\bar{B})|_{\bar{X}_F}$ has lc singularities. That is $\psi\in F$.

Let $F'$ be a facet of $\Delta$. Since $\Delta$ is $1$-connected, there exists a chain of facets
$F=F_0,F_1,\ldots,F_l=F'$ such that $F_i\cap F_{i+1}\ (0\le i<l)$ has codimension one.
We know $\psi\in F_0$. The codimension one face $\tau=F_0\cap F_1$ defines an irreducible 
component $X_\tau$ of $C$. Therefore $(\bar{X}_{F_0})_\tau$ appears as an irreducible component 
of $\bar{C}$. It is an lc center of $(\bar{X}/k,\bar{C}+\bar{B})|_{\bar{X}_{F_0}}$, that is $\psi\in \tau$.
Therefore $\psi\in F_1$. Repeating this argument along the chain, we obtain $\psi\in F'$.

We conclude that $\psi\in F$ for every facet $F$, that is $(\bar{X}/k,\bar{C}+\bar{B})$ has lc singularities.
Therefore $(X/k,B)$ has wlc singularities.

2) This follows from 1) and the description of the lc centers on the normalization.

3) This is clear.
\end{proof}

Suppose $(X/k,B)$ has wlc singularities.
The lc centers of $(X/k,B)$ are $X_\sigma$, where $\psi\in \sigma \in \Delta$.
Any union of lc centers is weakly normal.
The intersection of two lc centers is again an lc center. With respect to inclusion, there exists a unique 
minimal lc center, namely $X_{\sigma(\psi)}$ for $\sigma(\psi)=\cap_{\psi\in \sigma\in \Delta}\sigma$ 
(the unique cone of $\Delta$ which contains $\psi$ in its relative interior).

\begin{lem} 
Suppose $(X/k,B)$ is wlc. Then the minimal lc center of $(X/k,B)$ is normal.
\end{lem}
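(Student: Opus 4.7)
The plan is to mirror the irreducible case (Lemma \ref{mlcn}) by reducing the statement to the normality of the core $X_{\sigma(\Delta)}$ established in Proposition \ref{coredef}. Concretely, I will show that the minimal lc center $X_{\sigma(\psi)}$ is a $T$-invariant closed irreducible subvariety of the normal affine toric variety $X_{\sigma(\Delta)}$, and then invoke the fact that invariant closed irreducible subvarieties of a normal affine toric variety are again normal.

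First, I would locate $\psi$ inside $\sigma(\Delta)$. By Proposition \ref{wlpC}, the assumption that $(X/k,B)$ is wlc is equivalent to $\psi\in \sigma(\Delta)$. Since $\sigma(\Delta)$ is an intersection of cones of the fan $\Delta$ (all facets of $\Delta$ and the codimension-one faces corresponding to irreducible components of $C$), it is itself a cone of $\Delta$. Consequently, $\psi$ lies in the relative interior of some face of $\sigma(\Delta)$, and this face is automatically a cone of $\Delta$; by the uniqueness clause in the definition of $\sigma(\psi)$, this face must equal $\sigma(\psi)$. Hence $\sigma(\psi)\prec \sigma(\Delta)$.

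Next, the inclusion of semigroups $S_{\sigma(\psi)}\subseteq S_{\sigma(\Delta)}$ induced by the face relation realizes $X_{\sigma(\psi)}=\Spec k[S_{\sigma(\psi)}]$ as a $T$-invariant closed irreducible subvariety of the normal affine toric variety $X_{\sigma(\Delta)}=\Spec k[S_{\sigma(\Delta)}]$ (normality of the core is provided by Proposition \ref{coredef}). Since every $T$-invariant closed irreducible subvariety of a normal affine toric variety is itself a normal affine toric variety, we conclude that $X_{\sigma(\psi)}$ is normal.

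There is essentially no obstacle here beyond correctly verifying that $\sigma(\Delta)\in\Delta$; this is where the combinatorial definition of the core (as an intersection of cones belonging to a fan) is used crucially. Everything else is a bookkeeping transfer from the irreducible statement, with Proposition \ref{wlpC} replacing the log-discrepancy characterization used in Lemma \ref{mlcn}.
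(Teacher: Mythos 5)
Your proposal is correct and follows essentially the same route as the paper: the paper's proof (which simply refers back to Lemma~\ref{mlcn}) likewise places the minimal lc center $X_{\sigma(\psi)}$ inside the core $X_{\sigma(\Delta)}$ and invokes Proposition~\ref{coredef} together with the fact that invariant closed irreducible subvarieties of a normal affine toric variety are normal. The only cosmetic difference is that the paper obtains $\sigma(\psi)\prec\sigma(\Delta)$ by observing that the core is an intersection of lc centers and hence an lc center, whereas you derive it from $\psi\in\sigma(\Delta)$ (Proposition~\ref{wlpC}) plus the fan-theoretic fact that $\sigma(\Delta)\in\Delta$; these are equivalent.
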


\begin{proof} Same as for Lemma~\ref{mlcn}.
\end{proof}

\begin{exmp}
Let $X=\Spec k[\cM]$ be weakly normal and $S_2$. Let $B\subset X$ be the reduced sum of 
invariant codimension one subvarieties at which $X/k$ is smooth (i.e. $B=\Sigma_X-C$). 
Then $(X/k,B)$ is a weakly normal log variety with wlc singularities if and only if $X$ is $\Q$-orientable.
Moreover, $\omega^{[2r]}_{(X/k,B)}\simeq \cO_X$ if and only if $X$ is $2r$-orientable.

Indeed, suppose $X$ is $2r$-orientable. The log discrepancy function $\psi$ is zero.
The forms $\{c_F\omega_F^{\otimes 2r}\}_F$ glue to a nowhere zero global section 
$\omega\in \Gamma(X,\omega^{[2r]}_{(X/k,B)})$, and the log crepant structure 
$(\bar{X},\bar{C}+\bar{B}=\Sigma_{\bar{X}})$ induced on the normalization has log canonical singularities.
\end{exmp}

The lc centers of $(X/k,B)$ of codimension one are the invariant primes $E_i$ such that
either $\mult_{E_i}B=1$, or $E_i$ is an irreducible component of $C$. The normalization of 
$E_i$ is $E^n_i=\Spec k[(S_{\tau_i}-S_{\tau_i})\cap \tau_i]$, the different $B_{E^n_i}$
is induced by the log discrepancy function $\psi$ of $(X/k,B)$, and the residue 
of $\{c_F\chi^{r\psi}\omega_F^{\otimes r}\}_F$ is $c_i \chi^{r\psi}\omega_i^{\otimes r}$. 


\subsection{The LCS locus}

Let $(X/k,B)$ be a toric weakly normal log pair, with wlc singularities. Let $\psi$ be its log discrepancy
function. The {\em LCS locus}, or {\em non-klt locus} of $(X/k,B)$, is the union $Y$ of all lc centers
of positive codimension in $X$. The zero codimension lc centers are exactly the irreducible 
components of $X$. Therefore $Y$ is the union of all $X_\sigma$ such that 
$\psi\in \sigma\in \Delta$, and $\sigma$ is strictly contained in some facet of $\Delta$.

\begin{prop}\label{lcs1}
$Y$ is weakly normal and $S_2$, of pure codimension one in $X$. Moreover, 
$Y$ is Cohen Macaulay if so is $X$.
\end{prop}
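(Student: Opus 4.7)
The plan is to realize $Y$ as an explicit affine toric variety and extract each assertion from the combinatorial criteria established earlier in the paper. By Proposition~\ref{wlpC} the wlc hypothesis forces $\psi\in F$ for every facet $F$ of $\Delta$, so $\sigma(\psi)$ is a common face of every facet. Let $\Delta_Y$ be the subfan consisting of all cones of $\Delta$ contained in some codimension one face $\tau\succ\sigma(\psi)$ of $\Delta$; its facets are precisely those codimension one $\tau$. Then $Y=\Spec k[(M,\Delta_Y,(S_\sigma)_{\sigma\in\Delta_Y})]$, and $Y$ is of pure codimension one in $X$ because each facet of $\Delta_Y$ has dimension $\dim X-1$.

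Weak normality of $Y$ is inherited from $X$: the lattice collection $\{\Lambda_\sigma=S_\sigma-S_\sigma\}_{\sigma\in\Delta_Y}$ and all incidence numbers of $Y$ are restrictions of the corresponding data for $X$, so the seminormality criterion of~\cite[Proposition 2.10]{TFR1} and the characteristic condition defining weak normality transfer to $Y$. For the $S_2$ property I would apply Corollary~\ref{4.11} to $Y$, which reduces to (a) $1$-connectedness of $\Delta_Y$, and (b) the lattice identity $\Lambda_\sigma=\bigcap\Lambda_{\tau'}$ ranging over codimension one $\tau'\in\Delta_Y$ with $\tau'\succ\sigma$, for each $\sigma\in\Delta_Y$ of positive codimension. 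For (a), given facets $\tau,\tau'$ of $\Delta_Y$ pick ambient facets $F\succ\tau$, $F'\succ\tau'$ of $\Delta$ and use Lemma~\ref{1i} to obtain a chain $F=F_0,\ldots,F_n=F'$ of facets of $\Delta$ with successive codimension one intersections; the crucial point is that each $F_i\cap F_{i+1}$ contains $\sigma(\psi)$, since $\sigma(\psi)$ is a face of both $F_i$ and $F_{i+1}$ hence of their intersection, so $F_i\cap F_{i+1}$ is already a facet of $\Delta_Y$. Within each facet $F$ of $\Delta$, the codimension one faces above $\sigma(\psi)$ are linked via codimension two faces above $\sigma(\psi)$ by the classical $1$-connectedness of the face poset of a cone above a fixed subface, which concatenates with the previous step to yield the desired chain in $\Delta_Y$. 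For (b), Corollary~\ref{4.11} on $X$ gives $\Lambda_\sigma=\bigcap_{\tau''}\Lambda_{\tau''}$ over codimension one $\tau''\in\Delta$ with $\tau''\succ\sigma$; the restricted identity for $\Delta_Y$ follows by exhibiting, for each such $\tau''$, a codimension two face $\tau'\in\Delta_Y$ with $\sigma\prec\tau'\prec\tau''$, constructed as a codimension one face of $\tau''\cap\tau_0$ for a facet $\tau_0$ of $\Delta_Y$ above $\sigma$.

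For the Cohen--Macaulay statement, assuming $X$ is CM, I would use the short exact sequence $0\to I_Y\to\cO_X\to\cO_Y\to 0$, where $I_Y$ is the reduced monomial ideal of $Y$, together with a depth computation at each invariant closed point (in the spirit of Reisner's criterion, applied to the link complex encoded by $\Delta\setminus\Delta_Y$) to conclude $\mathrm{depth}\,\cO_Y\ge\dim Y$. The main obstacle is step (b): producing the codimension two refinement $\tau'\in\Delta_Y$ for each codimension one $\tau''\in\Delta$ above $\sigma$ (especially when $\tau''\not\succ\sigma(\psi)$) is a delicate combinatorial argument, and one must verify that $\tau''\cap\tau_0$ has the expected dimension, which uses both the embedding of $\sigma$ in a facet $\tau_0$ of $\Delta_Y$ and the hypothesis $\sigma(\psi)\prec\tau_0$ supplied by wlc.
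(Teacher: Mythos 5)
Your route is genuinely different from the paper's, and as it stands it has real gaps. The paper does not verify the combinatorial $S_2$ criterion for $Y$ at all. Instead it uses that $Y\supseteq C$ and $\bar Y=\pi^{-1}(Y)=\LCS(\bar X,\bar C+\bar B)$, so the square formed by $X,\bar X,Y,\bar Y$ is both cartesian and a push-out, giving a Mayer--Vietoris sequence
$$
0\to \cO_X\to \pi_*\cO_{\bar X}\oplus \cO_Y\to \pi_*\cO_{\bar Y}\to 0 .
$$
Then $\bar X$ is Cohen--Macaulay by Hochster, $\bar Y$ is Cohen--Macaulay by reducing the normal irreducible case to Danilov's depth computation for the toric boundary (after splitting off a torus factor via $Y\simeq T''\times\Sigma_{X'}$), and the long exact sequence of local cohomology transfers $S_2$ (resp.\ CM) from $X$ to $Y$. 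This bypasses every combinatorial difficulty you run into.

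The concrete gaps in your proposal are the following. First, your step (b) is not established: you must produce, for each codimension one cone $\tau''\in\Delta$ with $\tau''\succ\sigma$ and $\psi\notin\tau''$, a cone $\tau'\in\Delta_Y$ of codimension two in $\Delta$ with $\sigma\prec\tau'\prec\tau''$, i.e.\ a codimension two face of $\tau''$ above $\sigma$ that also sits inside some codimension one cone containing $\psi$. Your candidate, ``a codimension one face of $\tau''\cap\tau_0$,'' does not obviously have codimension two in $\Delta$ (the intersection $\tau''\cap\tau_0$ can have large codimension, especially for non-simplicial facets), and you give no argument that the resulting face lies in $\Delta_Y$. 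You flag this yourself as the main obstacle; flagging it does not close it. Second, your $1$-connectedness argument only arranges that the intermediate facets of $\Delta_Y$ contain $\sigma(\psi)$, whereas the definition requires them to contain $\tau\cap\tau'$ for the two given endpoints; the within-facet linking must therefore be carried out above $\tau\cap\tau'$, not merely above $\sigma(\psi)$, and the ``classical'' connectivity fact you invoke is stated but not proved. Third, the Cohen--Macaulay assertion is only a plan: ``in the spirit of Reisner's criterion'' applied to general toric face rings (not Stanley--Reisner rings) is not an argument, and no depth computation is actually performed. So while the identification of $Y$ as $\Spec k[(M,\Delta_Y,(S_\sigma))]$, the pure codimension one statement, and the weak normality are fine, the $S_2$ and CM claims are not proved by what you have written.
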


\begin{proof} Let $\pi\colon \bar{X}\to X$ be the normalization. Let $\bar{Y}=\pi^{-1}(Y)$. 
Then $\bar{Y}=\LCS(\bar{X}/k,\bar{C}+\bar{B})$. Since $Y$ contains $C$, the cartesian diagram 
\[ 
\xymatrix{
  \bar{X} \ar[d]_\pi & \bar{Y} \ar@{_{(}->}[l]  \ar[d]^\pi    \\
    X       & Y  \ar@{_{(}->}[l]  
} \]
is also a push-out. Equivalently, we have a Mayer-Vietoris short exact sequence 
$$
0\to \cO_X\to \pi_*\cO_{\bar{X}}\oplus \cO_Y\to \pi_*\cO_{\bar{Y}}\to 0.
$$
The subvariety $Y$ is weakly normal, since $X$ is. It is of pure codimension one in $X$,
since $Y=C\cup\Supp(B^{=1})$. We verify Serre's property in two steps.

{\em Step 1}: If $(X/k,B)$ is a {\em normal} toric log pair with lc singularities, then $Y=\LCS(X/k,B)$ is Cohen Macaulay. 

Indeed, let $X=\Spec k[M\cap \sigma]$ and $\psi\in \sigma$ be 
the log discrepancy function. Let $\tau\prec \sigma$ be the unique face which contains 
$\psi$ in its relative interior. In particular, a face of $\sigma$ contains $\psi$ if and only if 
it contains $\tau$. Then $Y=\cup_{\tau\prec \tau'\subsetneq \sigma} X_{\tau'}$.
Consider the quotient $M\to M'=M/(M\cap \tau-M\cap \tau)$, let $\sigma'$ be the image
of $\sigma$, denote $X'=\Spec k[M'\cap \sigma']$ and $T''=\Spec k[M\cap \tau-M\cap \tau]$. 
Then $X'$ is a normal affine variety with a fixed point $P$, and $Y\simeq T''\times \Sigma_{X'}$ 
(using the construction in~\cite[Remark 2.19]{TFR1}, we reduced to the case $\psi=0$). 
Since $T''$ is smooth, it is Cohen Macaulay.
By~\cite[Lemma 3.4.1]{Dan78}, $\depth_P(\Sigma_{X'})=\dim \Sigma_{X'}$, that is 
$X'$ is Cohen Macaulay. Therefore $Y$ is Cohen Macaulay.

{\em Step 2}: The disjoint union of normal affine toric varieties $\bar{X}$ is Cohen 
Macaulay by~\cite{Hoc72}, and $\bar{Y}$ is Cohen Macaulay by Step 1).
The Mayer-Vietoris short exact sequence and the cohomological interpretation 
of Serre's property, give that $Y$ is $S_2$ (respectively Cohen Macaulay) if so is $X$.
\end{proof}

Note that $\LCS(X/k,B)$ becomes the union of codimension one lc centers.
The normalizations of $\bar{Y}$ and $Y$ induce a commutative diagram
\[ 
\xymatrix{
  \bar{X} \ar[d]_\pi & \bar{Y} \ar@{_{(}->}[l]  \ar[d] &   \bar{Y}^n  \ar[l]  \ar[d]^g    \\
    X       & Y  \ar@{_{(}->}[l]  &   Y^n  \ar[l]_n
} \]

Let $X=\cup_F X_F$ and $Y=\cup_j E_j$ be the irreducible decompositions.
We have $\bar{X}=\sqcup_F \bar{X}_F$, $\bar{Y}=\sqcup_F \LCS(\bar{X}_F,(\bar{C}+\bar{B})|_{\bar{X}_F})$
and $\LCS(\bar{X}_F,(\bar{C}+\bar{B})|_{\bar{X}_F})=(\bar{C}\cup \Supp(\bar{B}^{=1}))|_{\bar{X}_F}$.
The irreducible components of $\bar{Y}$ are normal. Therefore
$
\bar{Y}^n=\sqcup_F\sqcup_{\psi\in \tau_j\prec F} \bar{E}_{j,F}=\sqcup_{\tau_j\ni \psi} \sqcup_{F\succ \tau_j}\bar{E}_{j,F}.
$
The normalization of $Y$ decomposes as $Y^n=\sqcup_j E^n_j$, with 
$E^n_j=\Spec k[(S_{\tau_j}-S_{\tau_j})\cap \tau_j]$.

Pick $r\ge 1$ such that such that $rB$ has integer coefficients and $\omega^{[r]}_{(X/k,B)}$ is invertible. 
Equivalently, $r\psi\in S_{\sigma(\Delta)}$ and there
exists a nowhere zero global section $\omega\in \Gamma(X,\omega^{[r]}_{(X/k,B)})$ such that
$\pi^*\omega|_{\bar{X}_F}=c_F\chi^{r\psi}\omega_F^{\otimes r}$ and 
$
\Res^{[r]}_{\bar{E}_{i,F}}\pi^*\omega=\pi^*_{i,F}(c_i\chi^{r\psi}\omega_i^{\otimes r}).
$
 Let $\eta$ be the rational 
pluridifferential form on $Y^n$ whose restriction to $E^n_j$ is $c_j\chi^{r\psi}\omega_j^{\otimes r}$. 
Then 
$$
\Res^{[r]}_{\bar{Y}^n} \pi^*\omega=g^*\eta.
$$
Let $B_{\bar{Y}^n}=-\frac{1}{r}(g^*\eta)$ and $B_{Y^n}=-\frac{1}{r}(\eta)$. Then $B_{\bar{Y}^n}$ is the
discriminant of $(\bar{X},\bar{C}+\bar{B})$ after codimension one adjunction to the components of $\bar{Y}^n$,
which is effective if $B$ is effective. Moreover, $g\colon (\bar{Y}^n, B_{\bar{Y}^n})\to (Y^n,B_{Y^n})$ is log
crepant. In particular $B_{Y^n}=g_*(B_{\bar{Y}^n})$ is effective if $B$ is effective.
All normal toric log pair structures induced on the irreducible components of $(\bar{X},\bar{C}+\bar{B})$,
$(\bar{Y}^n, B_{\bar{Y}^n})$ and $(Y^n,B_{Y^n})$ have the same log discrepancy function, namely $\psi$.
The correspondence $\omega\mapsto \eta$ induces the residue isomorphism
$$
\Res_{X\to Y^n} ^{[r]}\colon \omega^{[r]}_{(X/k,B)}|_{Y^n} \isoto \omega^{[r]}_{(Y^n/k,B_{Y^n})}
$$

\begin{prop}\label{lcs2} 
Let $r\in 2\Z$ such that $rB$ has integer coefficients and $\omega^{[r]}_{(X/k,B)}$ 
is invertible. The following are equivalent:
\begin{itemize}
\item[1)] There exists an invariant boundary $B_Y$ on $Y$ such that 
$(Y/k,B_Y)$ becomes a weakly normal log pair with the same log discrepancy function $\psi$,
with induced log structure $(Y^n,B_{Y^n})$ on the normalization, and such that 
codimension one residues onto the components of $Y^n$ glue to a (residue) isomorphism
$$
\Res_{X\to Y} ^{[r]}\colon \omega^{[r]}_{(X/k,B)}|_Y\isoto \omega^{[r]}_{(Y/k,B_Y)}.
$$
Moreover, $rB_Y$ has integer coefficients, and $B_Y$ is effective if so is $B$.
\item[2)] $(d_{Q\subset E_1}d_{E_1\subset X_F})^r=(d_{Q\subset E_2}d_{E_2\subset X_F})^r$ in $k^\times$, 
if $Q$ is an irreducible component of the non-normal locus of $Y$, $X_F$ is an irreducible  
component of $X$ containing $Q$, and $E_1,E_2$ are the (only) codimension one invariant 
subvarieties of $X_F$ containing $Q$.
\end{itemize}
\end{prop}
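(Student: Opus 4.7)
The plan is to recast condition (1), via Lemma~\ref{incrred} and Proposition~\ref{wlpC}, as the combination of (a) $r$-orientability of $Y$ and (b) descent of the residue section $\eta = \Res^{[r]}_{X \to Y^n}(\omega)$ from $Y^n$ to $Y$. Fix a nowhere-zero section $\omega \in \Gamma(X, \omega^{[r]}_{(X/k, B)})$ with $\omega|_{\bar{X}_F} = c_F \chi^{r\psi}\omega_F^{\otimes r}$. Using Lemma~\ref{crc} to compute the codimension one residue at each $\bar{E}_{j,F} \subset \bar{X}_F$, one obtains
\[ \eta|_{E_j^n} = c_j \chi^{r\psi}\omega_j^{\otimes r}, \qquad c_j = c_F \cdot d_{E_j \subset X_F}^{-r} \]
for any facet $F \supset \tau_j$; the expression is well-defined because $r$ is even and $X$ is $r$-orientable.

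Next, analyze descent at each irreducible codimension one component $Q = X_\sigma$ of the non-normal locus of $Y$: applying Lemma~\ref{crc} a second time to the further residue of $\eta|_{E_j^n}$ along $(E_j^n)_\sigma \to Q$ yields a form which pulls back from $c_j d_{Q \subset E_j}^{-r} \chi^{r\psi}\omega_Q^{\otimes r}$. Hence descent amounts to the scalar equality
\[ c_{j_1} d_{Q \subset E_{j_1}}^{-r} = c_{j_2} d_{Q \subset E_{j_2}}^{-r} \quad \text{in } k^\times \]
for every pair $E_{j_1}, E_{j_2}$ of components of $Y$ containing $Q$. Since $\psi \in \sigma$, both codimension one invariant subvarieties of any facet $X_F \supset Q$ are automatically lc centers and lie in $Y$, so such pairs are exactly the $(E_1, E_2)$ described in condition (2). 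When $E_{j_1}, E_{j_2}$ lie in a common facet $X_F$, substituting $c_{j_i} = c_F d_{E_{j_i} \subset X_F}^{-r}$ reduces the equality to condition (2); this proves (1) $\Rightarrow$ (2) and handles the same-facet case in the converse.

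For the remaining case in the converse---where $E_{j_1} \subset X_{F_1}$ and $E_{j_2} \subset X_{F_2}$ with $F_1 \ne F_2$---use that $X$ being $S_2$ and $1$-connected (Theorem~\ref{2.10}) implies the star of $\sigma$ in $\Delta$ is itself $1$-connected. Choose a chain $F_1 = G_0, G_1, \ldots, G_l = F_2$ of facets of $\Delta$ each containing $\sigma$, with $\beta_i := G_i \cap G_{i+1}$ of codimension one in both $G_i$ and $G_{i+1}$. Iterated application of condition (2) at the pair $(X_{\beta_{i-1}}, X_{\beta_i})$ inside $X_{G_i}$, together with the $r$-orientability cycle identity $c_{G_{i+1}}/c_{G_i} = (d_{\beta_i \prec G_{i+1}}/d_{\beta_i \prec G_i})^r$ of $X$, telescopes to yield the required scalar equality. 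Integrality of $rB_Y$ and effectivity when $B$ is effective follow from the explicit descent formula on each $E_j^n$ and Proposition~\ref{wlpC}.

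The main obstacle is this chain argument in the converse direction: condition (2) only controls pairs of components within a single facet of $\Delta$, while the descent equations at $Q$ may involve pairs of components in different facets. Showing that the pairwise equalities in (2) telescope along a chain in the star of $\sigma$ so as to cancel the intermediate incidence data against the $r$-orientability cycle data of $X$ is the delicate combinatorial step; the evenness of $r$ is essential here to eliminate sign ambiguities from the $\epsilon_{\tau_i \prec F}$.
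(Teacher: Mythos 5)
Your proposal is correct and follows essentially the same route as the paper: both reduce condition 1) to the statement that the scalar $c_F\,(d_{Q\subset E_i}d_{E_i\subset X_F})^{-r}$ (equivalently your $c_j\,d_{Q\subset E_j}^{-r}$) depends only on $Q$, prove independence of $E_i$ within a fixed facet via condition 2), and prove independence of the facet by chaining through the star of the codimension-two cone using $1$-connectedness and the relation $c_F d_{E_i\subset X_F}^{-r}=c_i=c_{F'}d_{E_i\subset X_{F'}}^{-r}$. Your telescoping formulation is just a repackaging of the paper's well-definedness check for $c_Q$.
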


\begin{proof} If $Y$ is normal, there is nothing to prove. Suppose $Y$ is not normal.
Let $Q$ be an irreducible component of the non-normal locus of $Y$.
Then $Q=X_\gamma$ for some cone $\gamma\in \Delta$ of codimension two.
The primes of $Y^n$ over $Q$ are $Q_{\gamma,j}=\Spec k[(S_{\tau_j}-S_{\tau_j})\cap \gamma]\subset E^n_j$, 
one for each $\tau_j$ which contains $\gamma$. The induced morphism $n_{\gamma,j}\colon Q_{\gamma,j} \to Q$ is 
finite surjective, of degree $d_{Q\subset E_j}$.
Let $\omega_Q$ be a volume form on the torus inside $Q$, induced by an orientation of $S_\gamma-S_\gamma$.

We have $Q=E_1\cap E_2$ for some irreducible components $E_1,E_2$ of $Y$ (by the argument 
of the proof of Corollary~\ref{ncn}). Since $E_1,E_2$ are lc centers of $(X/k,B)$, so is their intersection
$Q$. That is $\psi\in \gamma$. Therefore $\mult_{Q_{j,\gamma}} B_{Y^n}=1$ for every $E_j\supset Q$. 
Since $r$ is even, we compute
$$
\Res^{[r]}_{Q_{j,\gamma}}\eta=c_j\chi^{r\psi}(\Res_{Q_{j,\gamma}}\omega_j)^{\otimes r}=
n_{j,\gamma}^*(c_j d^{-r}_{Q\subset E_j}\omega_Q^{\otimes r}).
$$

Property 1) holds if and only if $\Res^{[r]}_{Q_{j,\gamma}}\eta$ does not depend on $j$,
that is $c_i d^{-r}_{Q\subset E_i}=c_Q$ for every $E_i \supset Q$ (it follows that $E_i$
is an lc center, hence an irreducible component of $Y$). Since $c_i=c_Fd_{E_i\subset X_F}^{-r}$,
property 1) holds if and only if $c_F (d_{Q\subset E_i}d_{E_i\subset X_F})^{-r}=c_Q$ for every $Q\subset E_i\subset X_F$.

$1)\Longrightarrow 2)$: $c_F (d_{Q\subset E_1}d_{E_1\subset X_F})^{-r}=c_Q=c_F (d_{Q\subset E_2}d_{E_2\subset X_F})^{-r}$.
Therefore 2) holds.

$2)\Longrightarrow 1)$: We claim that $c_F (d_{Q\subset E_i}d_{E_i\subset X_F})^{-r}$ depends only on $Q$.
By 2), it does not depend on the choice of $E_i$, once $F$ is chosen. It remains to verify independence on $F$ as well. Since $\Delta$
is $1$-connected, we may only consider two facets $F,F'$ which contain $\gamma$, and intersect in codimension one.
Let $\tau_i=F\cap F'$. From $c_Fd_{E_i\subset X_F}^{-r}=c_i=c_{F'}d_{E_i\subset X_{F'}}^{-r}$, we obtain
$c_F(d_{Q\subset E_i}d_{E_i\subset X_F})^{-r}= c_F(d_{Q\subset E_i}d_{E_i\subset X_{F'}})^{-r}$. Therefore
$c_F (d_{Q\subset E_i}d_{E_i\subset X_F})^{-r}$ does not depend on $F$ either, say equal to $c_Q$. We obtain 
$$
\Res^{[r]}_{Q_{j,\gamma}}\eta=n_{j,\gamma}^*(c_Q \omega_Q^{\otimes r}).
$$
Therefore $(Y/k,B_Y=n_*(B_{Y^n}-\text{Cond}(n)))$ is a weakly normal log pair, $rB_Y$ has integer coefficients
and $\omega^{[r]}_{(Y/k,B_Y)}$ is trivialized by a nowhere zero global section such that 
$n^*\omega'=\eta$. The map $\omega\mapsto \omega'$ induces an isomorphism
$
\Res_{X\to Y}^{[r]}\colon \omega^{[r]}_{(X/k,B)}|_Y\isoto \omega^{[r]}_{(Y/k,B_Y)}.
$
\end{proof}


\section{Residues to lc centers of higher codimension}


\begin{defn}
We say that $X=\Spec k[\cM]$ has {\em normal components} if each irreducible component $X_F$ of $X$ is normal. 
\end{defn}
 
Suppose $X$ has normal components. If $F$ is a facet of $\Delta$ and $\sigma\prec F$,
then $S_\sigma=(S_F-S_F)\cap \sigma$. Therefore each invariant closed irreducible subvariety $X_\sigma\ (\sigma\in \Delta)$ 
is normal. Moreover, $X/k$ is weakly normal, and it is $S_2$ if and only if $\Delta$ is $1$-connected.
 
For the rest of this section, let $(X/k,B)$ be a toric weakly normal log pair with wlc singularities, such that 
{\em $X$ has normal components}. Under the latter assumption (which implies that $X$ is
$2$-orientable), $(X/k,B)$ is a wlc log pair if and only if the toric log structures induced on the irreducible
components of the normalization of $X$ have the same log discrepancy function $\psi\in \cap_F F$.
Let $r\in 2\Z$. Suppose $r\psi\in \cap_F S_F$, that is $rB$ has integer coefficients and 
$\omega^{[r]}_{(X/k,B)}$ is invertible. The lc centers of $(X/k,B)$ are $\{X_\sigma;\psi\in \sigma\in \Delta\}$. 
Let $X_\sigma$ be an lc center.
Let $B_{X_\sigma}$ be the invariant boundary induced by $\psi\in \sigma$. 
Then $(X_\sigma/k,B_{X_{\sigma}})$ becomes a {\em normal} toric log pair with lc singularities, 
$rB_{X_{\sigma}}$ has integer coefficients (effective if so is $B$) and $\omega^{[r]}_{(X_\sigma/k,B_{X_\sigma})}$ is 
trivial, and the lc centers of $(X_\sigma/k,B_{X_{\sigma}})$ are exactly the lc centers of $(X/k,B)$ which are
contained in $X_\sigma$. Let $\omega_\sigma$ be a volume form on the torus inside
$X_\sigma$ induced by some orientation of the lattice $S_\sigma-S_\sigma$.
The forms $\{\chi^{r\psi}\omega_F^{\otimes r} \}_F$ glue to a nowhere 
zero global section of $\omega^{[r]}_{(X/k,B)}$.

Let $Z$ be an lc center of $(X/k,B)$. On an irreducible toric variety, any proper 
invariant closed irreducible subvariety is contained in some invariant codimension one subvariety. 
Therefore we can construct a chain of invariant closed irreducible subvarieties
$$
X\supset X_0\supset X_1\supset \cdots \supset X_{c-1}\supset X_c=Z
$$
such that $X_0$ is an irreducible component of $X$ and $\codim(X_j \subset X_{j-1})=1 \ (0< j\le c)$. 
Let $X_i=X_{\sigma_i}$. Since $\sigma_c$ contains $\psi$, each $\sigma_i$ contains $\psi$. Therefore
each $X_i$ is an lc center of $(X/k,B)$, and $X_j$ becomes a codimension one lc center of $(X_{j-1}/k,B_{X_{j-1}})$.
Define the codimension zero residue $\Res^{[r]}_{X\to X_0} \colon \omega^{[r]}_{(X/k,B)}|_{X_0}\isoto  \omega^{[r]}_{(X_0/k,B_{X_0})}$ 
as the pullback to the normalization of $X$, followed by the restriction to the irreducible component $X_0$ of $\bar{X}$. We have
$$
\Res^{[r]}_{X\to X_0} \{\chi^{r\psi}\omega_F^{\otimes r} \}_F= \chi^{r\psi}\omega_{\sigma_0}^{\otimes r}.
$$
For $0<j\le c$, let $\Res^{[r]}_{X_{j-1}\to X_j} \colon \omega^{[r]}_{(X_{j-1}/k,B_{X_{j-1}})}|_{X_j}\isoto  \omega^{[r]}_{(X_j/k,B_{X_j})}$ 
be the usual codimension one residue. We have $\Res_{X_{j-1}\to X_j} \omega_{\sigma_j} = \epsilon_{j-1,j}\omega_{\sigma_j}$ for some
$\epsilon_{j-1,j} = \pm 1$. Since $r$ is even, we obtain 
$$
\Res^{[r]}_{X_{j-1}\to X_j}  \chi^{r\psi}\omega_{\sigma_{j-1}}^{\otimes r}= \chi^{r\psi}\omega_{\sigma_j}^{\otimes r}.
$$
The composition $\Res^{[r]}_{X_{c-1} \to X_c}|_Z \circ \cdots \circ \Res^{[r]}_{X_0 \to X_1}|_Z \circ \Res^{[r]}_{X \to X_0}|_Z$ is
an isomorphism $\omega^{[r]}_{(X/k,B)}|_Z\isoto \omega^{[r]}_{(Z/k,B_Z)}$ which maps 
$\{\chi^{r\psi}\omega_F^{\otimes r} \}_F$ onto $\chi^{r\psi} \omega_{\sigma_c}^{\otimes r}$. It does not depend on the 
choice of the chain from $X$ to $Z$, so we can denote it 
$$
\Res^{[r]}_{X\to Z}\colon \omega^{[r]}_{(X/k,B)}|_Z\isoto \omega^{[r]}_{(Z/k,B_Z)},
$$
and call it the {\em residue from $(X/k,B)$ to the lc center $Z$}.

\begin{lem}
Let $Z'$ be an lc center of $(Z/k,B_Z)$. Then $Z'$ is also an lc center of $(X/k,B)$, and the following diagram is commutative:
\[ 
\xymatrix{
  \omega^{[r]}_{(X/k,B)}|_{Z'} \ar[rr]^{\Res^{[r]}_{X\to Z'} }  \ar[dr]_{ (\Res^{[r]}_{X\to Z})|_{Z'}  }  &   & \omega^{[r]}_{(Z'/k,B_{Z'})} \\
  &  \omega^{[r]}_{(Z/k,B_Z)}|_{Z'} \ar[ur]_{ \Res^{[r]}_{Z\to Z'} }  &
} \]
\end{lem}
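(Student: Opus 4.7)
The plan is to identify $Z'$ explicitly as an invariant subvariety $X_{\sigma'}$, verify that $Z'$ is an lc center of $(X/k,B)$, and then deduce the commutativity of the triangle from the chain-independence of the iterated residue already built into the construction above.

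First I would write $Z = X_\sigma$ and $Z' = X_{\sigma'}$ with $\sigma,\sigma'\in\Delta$. Since $Z'\subset Z$, the cone $\sigma'$ is a face of $\sigma$, and hence $\sigma'\in\Delta$. As $Z'$ is an lc center of the normal toric log pair $(Z/k,B_Z)$, which has the same log discrepancy function $\psi$, we have $\psi\in\sigma'$. The classification of lc centers of $(X/k,B)$ as the invariant subvarieties $X_\tau$ with $\psi\in\tau\in\Delta$ then exhibits $Z'$ as an lc center of $(X/k,B)$.

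Next I would compute both sides of the diagram via concatenated chains. Choose a chain
$$X \supset X_F \supset X_{\sigma_1} \supset \cdots \supset X_{\sigma_c} = Z$$
realizing $\Res^{[r]}_{X\to Z}$, where $F\succ\sigma$ is a facet of $\Delta$ and each $\sigma_{j+1}\prec\sigma_j$ has codimension one. Since $Z$ is irreducible, a chain realizing $\Res^{[r]}_{Z\to Z'}$ has its codimension zero restriction step equal to the identity and so takes the form
$$Z = X_{\tau_0} \supset X_{\tau_1} \supset \cdots \supset X_{\tau_{c'}} = Z',$$
with each $\tau_{j+1}\prec\tau_j$ of codimension one. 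Concatenation gives
$$X \supset X_F \supset \cdots \supset X_{\sigma_c} = X_{\tau_0} \supset \cdots \supset X_{\tau_{c'}} = Z',$$
which is itself an admissible chain for $\Res^{[r]}_{X\to Z'}$: its first member is an irreducible component of $X$ and every subsequent inclusion has codimension one.

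By the very definition of the iterated residue, $\Res^{[r]}_{Z\to Z'}\circ(\Res^{[r]}_{X\to Z})|_{Z'}$ coincides with the iterated residue computed along this concatenated chain; the already established chain-independence (both iterated residues send the canonical generator $\{\chi^{r\psi}\omega_F^{\otimes r}\}_F$ to $\chi^{r\psi}\omega_{\sigma'}^{\otimes r}$, the evenness of $r$ absorbing the sign ambiguities in the volume forms) therefore identifies this composition with $\Res^{[r]}_{X\to Z'}$, and the triangle commutes. No genuine obstacle arises; the argument is purely combinatorial once chain-independence is in hand and reduces to the fact that gluing two admissible chains end to end produces an admissible chain.
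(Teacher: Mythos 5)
Your argument is correct and matches the paper's proof in substance: both reduce to tracking the canonical generators, observing that $\Res^{[r]}_{X\to Z'}$ and the composite $\Res^{[r]}_{Z\to Z'}\circ(\Res^{[r]}_{X\to Z})|_{Z'}$ each send $\{\chi^{r\psi}\omega_F^{\otimes r}\}_F$ to $\chi^{r\psi}\omega_{\sigma'}^{\otimes r}$, with $r$ even absorbing the signs. Your explicit chain-concatenation framing is just a slightly more procedural phrasing of the same chain-independence fact the paper invokes.
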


\begin{proof} Let $Z=X_\sigma$ and $Z'=X_{\sigma'}$. Then $\sigma'\prec \sigma$, and the generators are mapped as follows 
\[ 
\xymatrix{
  \{ \chi^{r\psi}\omega_F^{\otimes r} \}_F \ar[rr]  \ar[dr] &   &  \chi^{r\psi}\omega_{\sigma'}^{\otimes r}   \\
  &  \chi^{r\psi}\omega_\sigma^{\otimes r}  \ar[ur] &
} \]
Therefore the triangle of isomorphisms commutes.
\end{proof}

We may define residues onto lc centers in a more invariant fashion.

\begin{prop}\label{ita} Suppose $Y=\LCS(X/k,B)$ is non-empty. Then $(Y/k,B_Y)$ is a toric weakly normal log pair
with wlc singularities, such that $Y$ has normal components, and the codimension one residues onto the 
components of $Y$ glue to a residue isomorphism
$$
\Res^{[r]}_{X\to Y}\colon \omega^{[r]}_{(X/k,B)}|_Y \isoto \omega^{[r]}_{(Y/k,B_Y)}.
$$
Moreover, the lc centers of $(Y/k,B_Y)$ are exactly the lc centers of $(X/k,B)$ which are not maximal 
with respect to inclusion.
\end{prop}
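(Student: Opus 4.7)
The plan is to realize $Y$ as a toric subvariety $\Spec k[\cM_Y]$ of $X$, where $\cM_Y$ has fan $\Delta_Y=\{\sigma\in \Delta : \psi\in\sigma,\ \sigma\text{ is not a facet of }\Delta\}$ and semigroups inherited from $\cM$. The maximal cones of $\Delta_Y$ are exactly the codimension-one faces $\tau_j\prec\Delta$ with $\psi\in\tau_j$: any larger-codimension $\sigma\in\Delta_Y$ sits in a chain $\sigma\prec\tau\prec F$ with $\tau$ codimension one, and $\tau$ also contains $\psi$ hence lies in $\Delta_Y$. So the irreducible components of $Y$ are the $E_j=X_{\tau_j}$ that are lc centers. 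Because $X$ has normal components, every $X_\sigma$ ($\sigma\in\Delta$) is normal; in particular each $E_j$ is normal, so $Y$ has normal components and all relevant incidence numbers $d_{Q\subset E_j}$ equal $1$.

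Next I would verify that $(Y/k,B_Y)$ is a weakly normal log pair with wlc singularities and log discrepancy function $\psi$. Proposition~\ref{lcs1} already gives that $Y$ is weakly normal and $S_2$; the normal-components property together with Lemma~\ref{crcrem} makes $Y$ automatically $r$-orientable for every even $r$. The normalization $Y^n=\sqcup_j E_j$ carries, on each component, the normal toric log pair $(E_j,B_{E_j})$ obtained from $(X_F,B_{X_F})$ (any facet $F\succ\tau_j$) by the codimension-one residue construction of Section 1, and all these share the common log discrepancy function $\psi\in\bigcap_{\psi\in\tau_j}\tau_j$. Proposition~\ref{wlpC} then packages these data into a weakly normal log pair structure $(Y/k,B_Y)$ with wlc singularities and log discrepancy function $\psi$, and with $rB_Y$ integral whenever $rB$ is.

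The residue isomorphism is furnished by Proposition~\ref{lcs2}: its verification condition $(d_{Q\subset E_1}d_{E_1\subset X_F})^r=(d_{Q\subset E_2}d_{E_2\subset X_F})^r$ in $k^\times$ is trivially satisfied because every incidence number in sight equals $1$. Hence the codimension-one residues on the components of $Y$ glue to the asserted isomorphism $\Res^{[r]}_{X\to Y}\colon \omega^{[r]}_{(X/k,B)}|_Y \isoto \omega^{[r]}_{(Y/k,B_Y)}$. For the final statement, the lc centers of $(Y/k,B_Y)$ are by construction the invariant subvarieties $Y_\sigma=X_\sigma$ with $\psi\in\sigma\in\Delta_Y$, that is with $\sigma$ containing $\psi$ but not a facet of $\Delta$. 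Since wlc of $(X/k,B)$ forces $\psi$ to lie in every facet, the inclusion-maximal lc centers of $(X/k,B)$ are exactly the irreducible components of $X$, so the non-maximal lc centers of $(X/k,B)$ coincide with the lc centers of $(Y/k,B_Y)$. The main obstacle in this argument is the global gluing of residues, and that is exactly the content of Proposition~\ref{lcs2}, cleanly defused here by the normal-components hypothesis.
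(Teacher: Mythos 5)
Your proposal is correct and follows essentially the same route as the paper: normal components pass from $X$ to $Y$ (giving weak normality and trivial incidence numbers), $S_2$ comes from Proposition~\ref{lcs1}, and the gluing of residues is exactly condition 2) of Proposition~\ref{lcs2}, which holds trivially since all incidence numbers equal $1$. The paper's proof is simply a terser version of yours; your extra care in identifying the maximal cones of the LCS locus and invoking Proposition~\ref{wlpC} and Lemma~\ref{crcrem} explicitly is consistent with, and fills in, what the paper leaves implicit.
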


\begin{proof} Since $X$ has normal components, so does $Y$. In particular, $Y/k$ is weakly normal. 
It is $S_2$ by Proposition~\ref{lcs1}.
Since $X$ has normal components, the incidence numbers $d_{E_i\subset X_F}$ are all $1$.
Therefore the condition 2) of Proposition~\ref{lcs2} holds, and the codimension one residues glue to
a residue onto $Y$.
\end{proof}

Iteration of the restriction to $\LCS$-locus induces a chain 
$
X=X_0\supset X_1\supset \cdots\supset X_c=W
$
with the following properties:
\begin{itemize}
\item $(X_0/k,B_{X_0})=(X/k,B)$.
\item $X_i=\LCS(X_{i-1}/k,B_{X_{i-1}})$ and $B_{X_i}$ is the different of $(X_{i-1}/k,B_{X_{i-1}})$ on $X_i$.
\item $\LCS(W/k,B_W)=\emptyset$. That is $W/k$ is normal and the coefficients of $B_W$ are strictly less than $1$.
\end{itemize}

The irreducible components of $X_i$ are the lc centers of $(X/k,B)$ of codimension $i$,
and $W$ is the (unique) minimal lc center of $(X/k,B)$. We compute
$$
\Res^{[r]}_{X \to W} = \Res^{[r]}_{X_{c-1} \to X_c}|_W \circ \cdots \circ \Res^{[r]}_{X_0 \to X_1}|_W.
$$ 
If $Z$ is an lc center of $(X/k,B)$ of codimension $i$, then $Z$ is an irreducible component of $X_i$, and 
$$
\Res^{[r]}_{X \to Z}= \Res^{[r]}_{X_i \to Z} \circ \Res^{[r]}_{X_{i-1} \to X_i}|_Z \circ \cdots \circ \Res^{[r]}_{X_0 \to X_1}|_Z,
$$
where $\Res^{[r]}_{X_i \to Z}$ is defined as the pullback to the normalization of $X_i$, followed by the
restriction to the irreducible component $Z$.

\begin{lem}
Let $X'$ be a union of lc centers of $(X/k,B)$, such that $X'$ is $S_2$. Then $(X',B_{X'})$
is a toric log pair with wlc singularities and the same log discrepancy function $\psi$, and 
residues onto the components of $X'$ glue to a residue isomorphism
$$
\Res^{[r]}_{X \to X'} \colon \omega^{[r]}_{(X/k,B)}|_{X'} \isoto \omega^{[r]}_{(X'/k,B_{X'})}.
$$
\end{lem}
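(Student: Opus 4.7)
The plan is to realize $X'$ as a toric weakly normal subvariety of $X$, apply the weakly-log-canonical criterion of Proposition~\ref{wlpC} to $(X'/k,B_{X'})$, and then use explicit trivializing pluridifferentials to produce the residue isomorphism.

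First, I would describe $X'$ combinatorially. Let $\{\sigma_j\}$ be the cones of $\Delta$ such that $\{X_{\sigma_j}\}$ are the irreducible components of $X'$; each $X_{\sigma_j}$ is an lc center, so $\psi\in\sigma_j$. Put $\Delta'=\{\sigma\in\Delta:\sigma\prec\sigma_j\text{ for some }j\}$. Then $X'=\Spec k[\cM']$ with $\cM'=(M,\Delta',(S_\sigma)_{\sigma\in\Delta'})$. Because $X$ has normal components, each $X_{\sigma_j}$ is normal, so $X'$ has normal components and is weakly normal; being $S_2$ by hypothesis, $\Delta'$ is $1$-connected by Lemma~\ref{1i}, and since all relevant incidence numbers are $1$, Example~\ref{2ex}.2) together with Lemma~\ref{crcrem} gives that $X'$ is $n$-orientable for every even $n$.

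Second, I would apply Proposition~\ref{wlpC} to identify the log structure. The normalization of $X'$ is $\sqcup_j X_{\sigma_j}$, and each component carries the toric normal log pair structure $(X_{\sigma_j}/k,B_{X_{\sigma_j}})$ constructed earlier in the section, with log canonical singularities and common log discrepancy function $\psi\in\cap_j\sigma_j$. Since these data are compatible and $X'$ is $\Q$-orientable, Proposition~\ref{wlpC} gives that $(X'/k,B_{X'})$ is a weakly normal log pair with wlc singularities and log discrepancy $\psi$; that $rB_{X'}$ has integer coefficients is inherited from the components, and Lemma~\ref{incrred} then yields $\omega^{[r]}_{(X'/k,B_{X'})}\simeq\cO_{X'}$.

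Third, I would construct the residue by specifying compatible global trivializations on each side. The section $\omega=\{\chi^{r\psi}\omega_F^{\otimes r}\}_F$ trivializes $\omega^{[r]}_{(X/k,B)}$, and the candidate $\omega'=(\chi^{r\psi}\omega_{\sigma_j}^{\otimes r})_j$ on the normalization of $X'$ should descend to a trivialization of $\omega^{[r]}_{(X'/k,B_{X'})}$; the assignment $\omega\mapsto\omega'$ then defines the $\cO_{X'}$-linear isomorphism $\Res^{[r]}_{X\to X'}$, and its restriction to each component $X_{\sigma_j}$ agrees with the previously defined $\Res^{[r]}_{X\to X_{\sigma_j}}$ because both send $\omega$ to $\chi^{r\psi}\omega_{\sigma_j}^{\otimes r}$. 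The main obstacle is verifying that $\omega'$ actually descends, i.e.\ that its codimension-one residues along the conductor of $X'$ glue. At an invariant prime $Q=X_\tau$ of that conductor, lying on two components $X_{\sigma_i},X_{\sigma_j}$ of $X'$, the normality of $X_{\sigma_i},X_{\sigma_j}$ forces every relevant incidence number to equal $1$; hence each residue equals $\pm\chi^{r\psi}\omega_\tau^{\otimes r}$, and the evenness of $r$ kills the sign ambiguity, so the two residues agree. This is the analogue of condition~2) in Proposition~\ref{lcs2}, automatically satisfied because $X$ has normal components.
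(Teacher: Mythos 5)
Your proof is correct, but it takes a genuinely different route from the paper's. The paper observes that, since $X'$ is $S_2$ (hence $1$-connected, hence equidimensional), all components of $X'$ have the same codimension $i$, so $X'$ is a union of irreducible components of the $i$-th term $X_i$ of the iterated $\LCS$-filtration $X=X_0\supset X_1\supset\cdots$; it then simply \emph{defines} $\Res^{[r]}_{X\to X'}$ as the composite $\Res^{[r]}_{X_i\to X'}\circ\Res^{[r]}_{X_{i-1}\to X_i}|_{X'}\circ\cdots\circ\Res^{[r]}_{X_0\to X_1}|_{X'}$, where the last (codimension zero) residue is pullback to the normalization of $X_i$, restriction to the components forming $X'$, and descent. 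You instead describe $X'$ intrinsically as $\Spec k[\cM']$ for the subfan $\Delta'$ generated by the cones of the chosen lc centers, verify the hypotheses of Proposition~\ref{wlpC} directly (normal components, $1$-connectedness from Lemma~\ref{1i}, orientability from Lemma~\ref{crcrem}), and build the residue by gluing the trivializations $\chi^{r\psi}\omega_{\sigma_j}^{\otimes r}$, checking descent along the conductor of $X'$ by the same mechanism as condition 2) of Proposition~\ref{lcs2} (all incidence numbers equal $1$ and $r$ is even). The paper's route is shorter given Proposition~\ref{ita} and the machinery already in place, and gets compatibility with the single-center residues for free; your route is more self-contained, makes the weakly normal log pair structure on $(X',B_{X'})$ and the triviality of $\omega^{[r]}_{(X'/k,B_{X'})}$ fully explicit rather than implicit, and isolates exactly why the gluing works. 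Both arguments are sound and yield the same isomorphism, since each sends the generator $\{\chi^{r\psi}\omega_F^{\otimes r}\}_F$ to $\chi^{r\psi}\omega_{\sigma_j}^{\otimes r}$ on each component.
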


\begin{proof}
Note that $X'$ has normal components, hence it is weakly normal. Since $X'$ is $S_2$,
all irreducible components have the same codimension, say $i$, in $X$. Then 
$X'$ is a union of some irreducible components of $X_i$. Define 
$$
\Res^{[r]}_{X \to X'}= \Res^{[r]}_{X_i \to X'} \circ \Res^{[r]}_{X_{i-1} \to X_i}|_{X'} \circ \cdots \circ \Res^{[r]}_{X_0 \to X_1}|_{X'}.
$$
The codimension zero residue $\Res^{[r]}_{X_i \to X'}$ is defined as the pullback to the normalization of $X_i$,
followed by restriction to the union of irreducible components consisting of the normalization of $X'$, followed 
by descent to $X'$.
\end{proof}

\begin{exmp}\label{tb}
Let $X=\Spec k[\cM]$ be $S_2$, with normal components. Let $B=\Sigma_X-C_X$, the reduced sum 
of invariant prime divisors at which $X/k$ is smooth. Then $(X/k,B)$ is a toric weakly normal log variety, 
with log discrepancy function $\psi=0$, and $\LCS(X/k,B)=\Sigma_X$.

Indeed, $X$ is $2$-orientable since it has normal components. The $2$-forms $\{\omega_F^{\otimes 2}\}_F$
glue to a nowhere zero global section of $\omega^{[2]}_{(X/k,B)}$. Since $\psi=0$, the lc centers are
the invariant closed irreducible subvarieties of $X$. Therefore $\LCS(X/k,B)=\Sigma_X$.
\end{exmp}

\begin{prop}\label{mex}
Let $X=\Spec k[\cM]$ be $S_2$, with normal components. Let $X_i$ be the union of codimension $i$ invariant
subvarieties of $X$. Then $X_i$ is $S_2$ with normal components, $X_{i+1}\subset X_i$ has pure codimension one
if non-empty, and coincides with the non-normal locus of $X_i$ if $i>0$, and the following properties hold:
\begin{itemize}
\item $(X/k,\Sigma_X-C)$ is a wlc log variety, with zero log discrepancy function, and $\LCS$-locus $X_1$. The
induced boundary on $X_1$ is zero, and we have a residue isomorphism
$$
\Res^{[2]}\colon \omega^{[2]}_{(X/k,\Sigma_X-C)}|_{X_1} \isoto \omega^{[2]}_{(X_1/k,0)}.
$$
\item For $i>0$, $(X_i/k,0)$ is a wlc log variety, with zero log discrepancy function, and $\LCS$-locus $X_{i+1}$.
The induced boundary on $X_{i+1}$ is zero, and we have a residue isomorphism
$$
\Res^{[2]}\colon \omega^{[2]}_{(X_i/k,0)}|_{X_{i+1}} \isoto \omega^{[2]}_{(X_{i+1}/k,0)}.
$$
\end{itemize}
\end{prop}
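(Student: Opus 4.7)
The plan is to proceed by induction on $i$: the base case is Example~\ref{tb}, and each inductive step applies Proposition~\ref{ita} to $(X_i/k,0)$ to produce the next wlc structure together with a residue isomorphism. The combinatorial geometry of the $X_i$ is comparatively easy, and the main analytic content is to verify that the induced boundary is zero at every step.

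First I would settle the geometric claims. Since $X$ has normal components, every invariant closed subvariety $X_\sigma=\Spec k[(S_F-S_F)\cap \sigma]$ (for any facet $F\succ\sigma$) is normal, so each $X_i$ automatically has normal components. The inclusion $X_{i+1}\subset X_i$ has pure codimension one when non-empty, since any codimension $i+1$ cone of $\Delta$ is a codimension one face of some codimension $i$ cone. For $i>0$, two distinct irreducible components $X_\sigma, X_{\sigma'}$ of $X_i$ (with $\sigma,\sigma'$ of codimension $i$) meet along $X_{\sigma\cap\sigma'}$ of codimension at least $i+1$, so the non-normal locus of $X_i$ is contained in $X_{i+1}$; conversely any codimension $i+1$ cone $\gamma$ is contained in at least two codimension $i$ cones, so $X_\gamma$ lies in the pairwise intersection locus of $X_i$. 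The $S_2$ property of $X_i$ will follow from Proposition~\ref{lcs1} applied inductively, once the wlc structure is in place.

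The base case $i=0$ is exactly Example~\ref{tb}: $(X/k,\Sigma_X-C)$ is a wlc log variety with log discrepancy function $\psi=0$ and $\LCS$-locus $\Sigma_X=X_1$, trivialized in even powers by $\{\omega_F^{\otimes 2}\}_F$. For the inductive step, assume $(X_i/k,0)$ with $i\ge 1$ is a wlc log variety with normal components and $\psi=0$; the first transition from $(X_0/k,\Sigma_X-C)$ to $(X_1/k,0)$ is handled in the same way. I apply Proposition~\ref{ita}: its hypotheses hold because $X_i$ has normal components, so all relevant incidence numbers $d_{E\subset X_\sigma}$ equal $1$ and condition (2) of Proposition~\ref{lcs2} is trivially satisfied. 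This yields a wlc log pair structure on $\LCS(X_i/k,0)$ with log discrepancy function $\psi=0$, together with the glued residue isomorphism. Since $\psi=0$, every invariant closed subvariety of positive codimension inside a component of $X_i$ is an lc center; therefore $\LCS(X_i/k,0)=X_{i+1}$.

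The crux is verifying that the induced boundary on $X_{i+1}$ is zero. Pulling back to the normalization $X_{i+1}^n=\sqcup_\sigma X_\sigma$ (over codimension $i+1$ cones $\sigma$ of $\Delta$), the inherited log structure decomposes as $(X_\sigma,\bar{C}_\sigma+\bar{B}_\sigma)$, where $\bar{C}_\sigma$ is the pulled-back conductor of $X_{i+1}$ and $\bar{B}_\sigma$ is the pullback of the candidate boundary. By log crepancy and identification with the standard toric normal log pair of log discrepancy $\psi=0$, we have $\bar{C}_\sigma+\bar{B}_\sigma=\Sigma_{X_\sigma}$ on each component. Every invariant prime of $X_\sigma$ has the form $X_{\sigma'}$ with $\sigma'\succ\sigma$ of codimension $i+2$ in $\Delta$, and such an $X_{\sigma'}$ is contained in at least two distinct codimension $i+1$ invariant subvarieties of $X$ (any two codimension $i+1$ cones lying between $\sigma$ and $\sigma'$), hence belongs to the non-normal locus of $X_{i+1}$; therefore it is already absorbed into $\bar{C}_\sigma$. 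Thus $\bar{C}_\sigma=\Sigma_{X_\sigma}$, forcing $\bar{B}_\sigma=0$, which descends to $B_{X_{i+1}}=0$. The same computation yields $B_{X_1}=0$ in the first step. This boundary-vanishing check is the main obstacle; once it is in place, the residue isomorphism is exactly the one already furnished by Proposition~\ref{ita}, and the induction closes, terminating when $X_i$ has no codimension one invariant subvarieties (i.e., when $X_i$ equals the core of $X$).
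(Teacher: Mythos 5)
Your proposal is correct and follows essentially the same route as the paper's proof: iterate Example~\ref{tb} and Proposition~\ref{ita}, and reduce everything to the combinatorial fact that for $i>0$ each codimension $i+1$ cone of $\Delta$ lies in at least two codimension $i$ cones, so that every invariant prime divisor of $X_i$ lies in the non-normal locus, giving $\Sigma_{X_i}=C_{X_i}$ and hence $B_{X_i}=\Sigma_{X_i}-C_{X_i}=0$ (the paper phrases this by viewing an invariant prime $Q$ of $X_i$ as a codimension-two invariant subvariety of a component of $X_{i-1}$, hence lying on exactly two components of $X_i$). Only two cosmetic slips: your ``$\sigma'\succ\sigma$'' should read $\sigma'\prec\sigma$ in the paper's face notation, and there are no cones strictly between $\sigma$ and a codimension-one face $\sigma'$ of it --- what you need (and what your argument actually uses) is that $\sigma'$, having codimension at least two in a facet containing it, is a face of at least two cones of one dimension higher.
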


\begin{proof} By iterating the construction of Example~\ref{tb} and Proposition~\ref{ita}, 
we obtain for all $i\ge 0$ that $(X_i/k,B_{X_i})$ is a wlc log variety, with zero log discrepancy 
function, and $\LCS$-locus $X_{i+1}$, and the boundary induced on $X_{i+1}$ by 
codimension one residues is $B_{X_{i+1}}$.

If $X_i$ is a torus (i.e. $X$ contains no invariant prime divisors), then $X_{i+1}=\emptyset$. 
If $X_i$ is not a torus, then $X_{i+1}$ has pure codimension one in $X_i$.

Let $i>0$. We claim that $B_{X_i}=0$ and $X_{i+1}$ is the non-normal locus of $X_i$.
Suppose $X_i$ contains an invariant prime divisor $Q$. Since $i>0$, there exists an 
irreducible component $Q'$ of $X_{i-1}$ which contains $Q$. Then $Q$ has codimension 
two in $Q'$. Therefore $Q'$ has exactly two invariant prime divisors which contain $Q$, 
say $Q_1,Q_2$. Then $Q_1\ne Q_2$ are irreducible components of $X_i$, and $Q=Q_1\cap Q_2$. 
Therefore $Q$ is contained in $C_{X_i}$, the non-normal locus. We deduce $C_{X_i}=\Sigma_{X_i}=X_{i+1}$.
In particular, $B_{X_i}=0$.
\end{proof}


\subsection{Higher codimension residues for normal crossings pairs}


Let $(X/k,B)$ be a wlc log pair, let $x\in X$ be a closed point. We say that $(X/k,B)$ is 
{\em n-wlc at $x$}
if there exists an affine toric variety $X'=\Spec k[\cM]$ {\em with normal components}, 
associated to some monoidal complex $\cM$, an invariant boundary $B'$ on $X'$ and a closed point $x'$ in the 
closed orbit of $X'$, together with an isomorphism of complete local $k$-algebras 
$\cO_{X,x}^\wedge \simeq \cO_{X',x'}^\wedge$, and such that $(\omega^{[r]}_{(X/k,B)})_x^\wedge$
corresponds to $(\omega^{[r]}_{(X'/k,B')})_{x'}^\wedge$ for $r$ sufficiently divisible.
By~\cite{Ar69}, this is equivalent to the existence of a common \'etale neighborhood 
$$
\xymatrix{
   & (U,y)  \ar[dl]_i \ar[dr]^{i'} &  \\
(X,x)           &  & (X',x')  
}
$$ 
and a wlc pair structure $(U,B_U)$ on $U$ such that $i^* \omega^{[n]}_{(X/k,B)}=
\omega^{[n]}_{(U/k,B_U)}={i'}^* \omega^{[n]}_{(X'/k,B')}$ for all $n\in \Z$.
It follows that $X'/k$ must be weakly normal and $S_2$, and $(X'/k,B')$ is wlc.

Being n-wlc at a closed point is an open property. We say that {\em $(X/k,B)$ is n-wlc} if it so at every closed point. 
For the rest of this section, let $(X/k,B)$ be n-wlc. Let $r\in 2\Z$ such that $rB$ has integer coefficients and 
$\omega^{[r]}_{(X/k,B)}$ is invertible.

\begin{prop} Suppose $Y=\LCS(X/k,B)$ is non-empty. Then $Y$ is weakly normal and $S_2$, of pure
codimension one in $X$. There exists a unique boundary $B_Y$ such that $(Y/k,B_Y)$ is n-wlc, 
and codimension one residues onto the irreducible components of the normalization of $Y$ glue 
to a residue isomorphism
$$
\Res^{[r]}_{X\to Y}\colon \omega^{[r]}_{(X/k,B)}|_Y \isoto \omega^{[r]}_{(Y/k,B_Y)}.
$$
Moreover, the lc centers of $(Y/k,B_Y)$ are exactly the lc centers of $(X/k,B)$ which are not maximal 
with respect to inclusion.
\end{prop}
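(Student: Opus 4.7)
The plan is to reduce everything to the toric case (Proposition~\ref{ita}) via the \'etale local models provided by the n-wlc hypothesis. Fix a closed point $x \in Y$. By definition of n-wlc, there is a common \'etale neighborhood $i\colon (U,y) \to (X,x)$ and $i'\colon (U,y) \to (X',x')$, where $(X'/k,B')$ is a toric wlc pair with normal components and $x'$ lies in its closed orbit, such that the pluri-canonical sheaves match: $i^*\omega^{[r]}_{(X/k,B)} = (i')^*\omega^{[r]}_{(X'/k,B')}$ for $r$ sufficiently divisible. Applying Proposition~\ref{ita} to $(X'/k,B')$ yields that $Y' := \LCS(X'/k,B')$ is weakly normal, $S_2$, of pure codimension one in $X'$, has normal components, and carries a canonical toric boundary $B_{Y'}$ together with a residue isomorphism $\Res^{[r]}_{X'\to Y'}\colon \omega^{[r]}_{(X'/k,B')}|_{Y'} \isoto \omega^{[r]}_{(Y'/k,B_{Y'})}$.

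Since \'etale maps pull log resolutions back to log resolutions, the formation of the LCS locus commutes with \'etale pullback, so $i^{-1}(Y) = (i')^{-1}(Y')$ inside $U$. The topological properties (weakly normal, $S_2$, pure codimension one) therefore descend from $Y'$ to $Y$ near $x$, and $(Y'/k,B_{Y'})$ serves as a toric model at $x$, showing $(Y/k,B_Y)$ is n-wlc at $x$ once $B_Y$ is constructed. To construct and characterize $B_Y$, I would argue as in Section~3-B: on the normalization $j\colon Y^n \to Y$, the codimension one residues from $\omega^{[r]}_{(X/k,B)}$ onto the components of $\bar{Y}^n$ glue to a nowhere zero rational pluri-differential form $\eta$ on $Y^n$ (after descent along the finite map $g\colon \bar{Y}^n \to Y^n$), and the requirement that the induced log pair $(Y^n/k,B_{Y^n})$ satisfies $rB_{Y^n} = -(\eta)$ forces $B_Y$ via $j^*B_Y = B_{Y^n} - \Cond(j)$. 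The existence of this data is verified locally through the toric model, and its independence of the choice of toric model is automatic from the intrinsic residue formula.

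With $B_Y$ in hand, the residue isomorphism $\Res^{[r]}_{X\to Y}$ follows from its local counterpart $\Res^{[r]}_{X'\to Y'}$ on the toric model by \'etale descent, and its uniqueness follows from the uniqueness of local generators of the invertible sheaves involved. For the final statement, Proposition~\ref{ita} identifies the lc centers of $(Y'/k,B_{Y'})$ with the non-maximal lc centers of $(X'/k,B')$; since lc centers are preserved under \'etale morphisms, this identification transfers to $(X/k,B)$ and $(Y/k,B_Y)$. The main obstacle is to ensure that the locally defined boundaries $B_Y$ agree on overlaps, i.e., that the toric models at different points yield compatible data; this is resolved by the intrinsic characterization of $B_Y$ through residues, which makes no reference to any particular toric model and therefore glues automatically.
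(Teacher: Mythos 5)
Your proposal is correct and follows essentially the same route as the paper, whose entire proof is the one-line reduction ``By Proposition~\ref{ita} for a local analytic model''; you simply spell out the \'etale descent, the compatibility of the LCS locus with \'etale pullback, and the intrinsic residue characterization of $B_Y$ that makes the locally defined data glue. These details are exactly what the paper leaves implicit, so there is no substantive difference in approach.
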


\begin{proof}
By Proposition~\ref{ita} for a local analytic model.
\end{proof}
 
Iteration of the restriction to $\LCS$-locus induces a chain 
$
X=X_0\supset X_1\supset \cdots\supset X_c=W
$
with the following properties:
\begin{itemize}
\item $(X_0/k,B_{X_0})=(X/k,B)$.
\item $(X_i/k,B_{X_i})$ is a n-wlc pair, $X_i=\LCS(X_{i-1}/k,B_{X_{i-1}})$ and $B_{X_i}$ is the different on $X_i$
of $(X_{i-1}/k,B_{X_{i-1}})$.
\item $\LCS(W/k,B_W)=\emptyset$. That is $W/k$ is normal and the coefficients of $B_W$ are strictly less than $1$.
\end{itemize}

The irreducible components of $X_i$ are the lc centers of $(X/k,B)$ of codimension $i$,
and $W$ is the union of lc centers of $(X/k,B)$ of largest codimension.

Let $Z$ be an lc center of $(X/k,B)$, of codimension $i$. 
Then $Z$ is an irreducible component of $X_i$. Let $Z^n\to Z$ be the normalization.
Then $Z^n$ is an irreducible component of the normalization of $X_i$. Let $B_{Z^n}$
be the induced boundary. Define the zero codimension residue 
$$
\Res^{[r]}_{X_i \to Z^n}\colon \omega^{[r]}_{(X_i/k,B_{X_i})}|_{Z^n}\isoto \omega^{[r]}_{(Z^n/k,B_{Z^n})}
$$ 
as the pullback from $X_i$ to its normalization, followed by the restriction to the irreducible component $Z^n$.
Define 
$
\Res^{[r]}_{X \to Z^n}= \Res^{[r]}_{X_i \to Z^n} \circ \Res^{[r]}_{X_{i-1} \to X_i}|_{Z^n} \circ \cdots \circ \Res^{[r]}_{X_0 \to X_1}|_{Z^n}.
$
We obtain:

\begin{thm}\label{nwa}
Let $(X/k,B)$ be n-wlc. Let $r\in 2\Z$ such that $rB$ has integer coefficients and $\omega^{[r]}_{(X/k,B)}$ is invertible.
Let $Z$ be an lc center, with normalization $Z^n\to Z$. Then there
exists a log pair structure $(Z^n,B_{Z^n})$ on $Z^n$, and a higher codimension residue isomorphism
$$
\Res^{[r]}_{X \to Z^n}\colon  \omega^{[r]}_{(X/k,B)}|_{Z^n}\isoto \omega^{[r]}_{(Z^n/k,B_{Z^n})}.
$$
Moreover, $B_{Z^n}$ is effective if so is $B$, and $rB_{Z^n}$ has integer coefficients.
\end{thm}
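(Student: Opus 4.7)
The plan is to assemble the statement directly from the construction given in the paragraph preceding it. First, I would iterate the previous proposition (the n-wlc analogue of Proposition~\ref{ita}): starting from $(X_0,B_{X_0})=(X,B)$, set $X_{i+1}=\LCS(X_i/k,B_{X_i})$ with different $B_{X_{i+1}}$, stopping when the $\LCS$-locus is empty. At each step the proposition yields that $(X_{i+1}/k,B_{X_{i+1}})$ is n-wlc, that $rB_{X_{i+1}}$ has integer coefficients, that $\omega^{[r]}_{(X_{i+1}/k,B_{X_{i+1}})}$ is invertible, and that there is a codimension-one residue isomorphism $\Res^{[r]}_{X_i\to X_{i+1}}$; effectivity of $B_{X_{i+1}}$ is preserved as well. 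The same proposition identifies the lc centers of $(X_{i+1},B_{X_{i+1}})$ with the non-maximal lc centers of $(X_i,B_{X_i})$, so by induction the irreducible components of $X_i$ are precisely the lc centers of $(X,B)$ of codimension $i$.

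Next, given an lc center $Z$ of codimension $i$, $Z$ is an irreducible component of $X_i$, hence its normalization $Z^n$ is a connected component of the normalization $\pi\colon \bar{X}_i\to X_i$. Working in an \'etale neighborhood supplied by the n-wlc hypothesis, the local model is a toric weakly normal log pair with normal components; there the normalization is log crepant, so restricting to the component $Z^n$ produces a $\Q$-Weil divisor $B_{Z^n}$ on $Z^n$ and an isomorphism
$$
\Res^{[r]}_{X_i\to Z^n}\colon \omega^{[r]}_{(X_i/k,B_{X_i})}|_{Z^n}\isoto \omega^{[r]}_{(Z^n/k,B_{Z^n})}.
$$
The promised higher codimension residue is then the composition
$$
\Res^{[r]}_{X\to Z^n} = \Res^{[r]}_{X_i\to Z^n} \circ \Res^{[r]}_{X_{i-1}\to X_i}|_{Z^n} \circ \cdots \circ \Res^{[r]}_{X_0\to X_1}|_{Z^n},
$$
an isomorphism as a composition of isomorphisms. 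The integrality $rB_{Z^n}\in \Z$ and the effectivity of $B_{Z^n}$ when $B\ge 0$ follow by induction from the corresponding properties at each step of the chain and from the final normalization step, both already established in the toric case and inherited by the n-wlc case via the \'etale local model.

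The main obstacle I expect is verifying that $B_{Z^n}$ and $\Res^{[r]}_{X\to Z^n}$ are intrinsic, independent of both the chain of $\LCS$-steps (which in this case is uniquely determined) and the \'etale models chosen at individual closed points of $Z$. Once the n-wlc structures on the $X_j$ are pinned down, the previous proposition already provides each factor $\Res^{[r]}_{X_j\to X_{j+1}}$ as a global isomorphism of sheaves, so the composition is automatically intrinsic. For the local models, one checks that on any \'etale neighborhood the composition computed via the explicit toric construction for $X'=\Spec k[\cM]$ with normal components agrees with the intrinsic one, since both are determined by the same local trivializing section $\{c_F\chi^{r\psi}\omega_F^{\otimes r}\}_F$ pushed forward through the successive codimension-one residues, and this agreement lets us glue local descriptions into the asserted global isomorphism.
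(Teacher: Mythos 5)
Your proposal follows essentially the same route as the paper: iterate the $\LCS$-restriction proposition to build the canonical chain $X=X_0\supset X_1\supset\cdots$, identify $Z$ as an irreducible component of $X_i$ and $Z^n$ as a component of its normalization, and define $\Res^{[r]}_{X\to Z^n}$ as the composition of the codimension-one residues with the final codimension-zero (normalization pullback plus restriction) step. The paper treats the theorem as an immediate consequence of exactly this construction, so your argument, including the remark that the chain is canonical and the local toric models pin down the trivializing sections, matches it.
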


\begin{defn}
A {\em normal crossings pair} $(X/k,B)$ is an n-wlc pair with local analytic models of the following special type:
$0\in (X'/k,B')$, where $X'=\cup_{i\in I} H_i \subset \bA^n_k$ for some $I\subseteq \{1,\ldots,n\}$, and
$H_i=\{z_i=0\}\subset \bA_k^n$ is the $i$-th standard hyperplane. It follows that $B'=\sum_{i\notin I}b_iH_i|_{X'}$
for some $b_i\in \Q_{\le 1}$. 
\end{defn}

\begin{cor}\label{nwaNC}
Let $(X/k,B)$ be normal crossings pair. Let $r\in 2\Z$ such that $rB$ has integer coefficients and $\omega^{[r]}_{(X/k,B)}$ is invertible.
Let $Z$ be an lc center, with normalization $Z^n\to Z$. Then there exists a log pair structure $(Z^n,B_{Z^n})$ on $Z^n$, with log
smooth support, and a higher codimension residue isomorphism
$$
\Res^{[r]}_{X \to Z^n}\colon  \omega^{[r]}_{(X/k,B)}|_{Z^n}\isoto \omega^{[r]}_{(Z^n/k,B_{Z^n})}.
$$
Moreover, $B_{Z^n}$ is effective if so is $B$, and $rB_{Z^n}$ has integer coefficients.
\end{cor}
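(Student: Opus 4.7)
The plan is to apply Theorem~\ref{nwa} directly. By definition, a normal crossings pair $(X/k,B)$ is n-wlc, so Theorem~\ref{nwa} immediately provides the log pair structure $(Z^n,B_{Z^n})$, the residue isomorphism $\Res^{[r]}_{X\to Z^n}$, effectivity of $B_{Z^n}$ when $B\ge 0$, and the integrality $rB_{Z^n}\in \Div(Z^n)$. The only new content to verify is that $\Supp B_{Z^n}$ is log smooth, i.e.\ that $Z^n$ is smooth and $\Supp B_{Z^n}$ is an snc divisor on it.

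Since log smoothness is \'etale-local, I would check it on each local analytic model. Fix one: $0\in X'=\cup_{i\in I}H_i\subset \bA^n_k$ with $B'=\sum_{i\notin I}b_iH_i|_{X'}$. Under the toric dictionary of Section 4, $X'$ corresponds to the lattice $\Z^n$ and the fan $\Delta$ consisting of those faces $\tau_K\prec \R^n_{\ge 0}$ (with $\tau_K=\{x_j=0:j\in K\}$) satisfying $K\cap I\ne \emptyset$. All irreducible components $H_i$ of $X'$ are smooth, so $X'$ has normal components and the results of Section 5 apply. The log discrepancy function $\psi$ is characterized by $\langle e_i,\psi\rangle=0$ for $i\in I$ and $\langle e_i,\psi\rangle=1-b_i$ for $i\notin I$. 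An lc center of $(X',B')$ therefore corresponds to $\tau_K\in\Delta$ with $\psi\in\tau_K$, equivalently $K\subseteq I\cup\{i\notin I: b_i=1\}$ and $K\cap I\ne\emptyset$. The associated lc center $Z=\{z_j=0:j\in K\}$ is a coordinate subspace of $\bA^n_k$, in particular smooth and irreducible, so $Z^n=Z$ locally.

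Tracing the iterated residues constructed in Theorem~\ref{nwa} through this toric model, $B_{Z^n}$ comes out to $\sum_{i\notin K}(1-\langle e_i,\psi\rangle)\cdot(H_i\cap Z)$, which is a non-negative combination of coordinate hyperplanes of the smooth affine space $Z$ — manifestly an snc divisor. This gives log smooth support in each local model, and since the different is compatible with the \'etale maps connecting two local models (built into the definition of n-wlc), the local descriptions patch to a global log smooth pair $(Z^n,B_{Z^n})$. The main obstacle is bookkeeping the coefficients of the different through the chain of codimension-one adjunctions, but this is precisely the combinatorial computation already done in the toric case, and no new ideas are needed beyond Theorem~\ref{nwa}.
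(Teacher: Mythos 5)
Your proposal is correct and follows exactly the route the paper intends: the corollary is stated without separate proof because it is Theorem~\ref{nwa} plus the observation, which you carry out, that in the special local analytic models the lc centers are coordinate subspaces and the differents are supported on coordinate hyperplanes, hence log smooth. One immaterial slip: the coefficients $1-\langle e_i,\psi\rangle=b_i$ for $i\notin I$ need not be non-negative (only $\le 1$), so $B_{Z^n}$ is not a ``non-negative combination'' in general unless $B\ge 0$ --- but this does not affect the log smoothness of the support.
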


\begin{exmp}
Let $(X/\C,\Sigma)$ be a log smooth pair, that is $X/\C$ is smooth and $\Sigma$ is a divisor with normal
crossings in $X$. Let $Z$ be an lc center of $(X/\C,\Sigma)$, let $Z^n\to Z$ be the normalization. 
Deligne~\cite{Del71} defines a residue isomorphism
$
\Res \colon \omega_X(\log \Sigma)|_{Z^n}\isoto \omega_{Z^n}(\log \Sigma_{Z^n})\otimes \epsilon_{Z^n},
$
where $\epsilon_{Z^n}$ is a local system (orientations of the local analytic branches of $\Sigma$ through $Z$) 
such that $\epsilon_{Z^n}^{\otimes 2}\simeq \cO_{Z^n}$.
Then $\Res^{\otimes 2}$ coincides with 
$
\Res^{[2]}\colon  \omega^{[r]}_{(X/\C,\Sigma)}|_{Z^n}\isoto \omega^{[r]}_{(Z^n/k,\Sigma_{Z^n})}
$
defined above.
\end{exmp}

\begin{question}
Let $(X/k,B)$ be a wlc log pair which is locally analytically isomorphic to a toric wlc log pair
(the toric local model may have non-normal irreducible components).
Let $Z$ be an lc center, let $Z^n\to Z$ be the normalization. Is there a residue isomorphism from $X$ to $Z^n$? 
Is it torsion the moduli part in the higher codimension adjunction formula from $(X/k,B)$ to $Z^n$?
\end{question}


\end{document}